\newcommand{\Bd}{\mathop{\mathrm{Bd}}}
\newtheorem{theorem}{Theorem}[section]
\newtheorem{proposition}[theorem]{Proposition}
\newtheorem{lemma}[theorem]{Lemma}
\newtheorem{corollary}[theorem]{Corollary}
\theoremstyle{definition}
\newtheorem{example1}[theorem]{Example}
\begin{document}

  \title[Computability of semicomputable manifolds]{Computability of semicomputable manifolds in computable topological spaces}
  \author{Zvonko
    Iljazovi\'{c}}
  \author{Igor Su\v{s}i\'{c}}
  
 \begin{abstract}
We study computable topological spaces and semicomputable and
computable sets in these spaces. In particular, we investigate
conditions under which  semicomputable sets are computable. We
prove that a semicomputable compact manifold $M$ is computable if
its boundary $\partial M$ is computable.  We also show how this
result combined with certain construction which compactifies a
semicomputable set  leads to the conclusion that some noncompact
semicomputable manifolds in computable metric spaces are
computable.
\end{abstract} 
  
    \maketitle

\section{Introduction}\label{introd}
A real number is computable if it can be effectively approximated
 by a rational number with arbitrary precision \cite{turing}. A tuple $(x_{1}
 ,\dots ,x_{n} )\in \mathbb{R}^{n}$ is computable if $x_{1} ,\dots
 ,x_{n} $ are computable numbers.  A compact subset of
$\mathbb{R}^{n} $ is computable if it can be effectively
approximated  by a finite set of points with rational coordinates
with arbitrary precision \cite{br-we}. Each nonempty computable
subset of $\mathbb{R}^{n} $ contains computable points, moreover
they are dense in it.

Suppose $f:\mathbb{R}^{n} \rightarrow \mathbb{R}$ is a computable
function (in the sense of \cite{per,we}) such that the set $f^{-1}
(\{0\})$ is compact. Does $f^{-1} (\{0\})$ have to be a computable
set?

 It is known that there exists a computable function
$f:\mathbb{R}\rightarrow \mathbb{R}$ which has zero-points and all
of them lie in $[0,1]$, but none of them is computable
\cite{specker}. So $f^{-1} (\{0\})$ is a nonempty compact set
which contains no computable point. In particular $f^{-1} (\{0\})$
is not computable, in fact we might say it is ``far away from
being computable''.

Hence for a function $f:\mathbb{R}^{n} \rightarrow \mathbb{R}$
such that $f^{-1} (\{0\})$ is a compact set the implication
\begin{equation}\label{fc-f0c}
f\mbox{ computable }\Rightarrow f^{-1} (\{0\}) \mbox{ computable}
\end{equation}
does not hold in general. The question is are there any additional
assumptions under which (\ref{fc-f0c}) holds. It turns out that
such assumptions exist and that certain topological properties of
the set $f^{-1} (\{0\})$ play an important role in this sense.

But before explaining what are these topological properties, we
will give another view to implication (\ref{fc-f0c}).

 A compact subset $S$ of
$\mathbb{R}^{n} $ is semicomputable if we can effectively
enumerate all rational open sets which cover $S$. It turns out
that a compact subset $S$ of $\mathbb{R}^{n} $ is semicomputable
if and only if $S=f^{-1} (\{0\})$ for some computable function
$f:\mathbb{R}^{n} \rightarrow \mathbb{R}$. Therefore closely
related to the question under which conditions (\ref{fc-f0c})
holds is the question under which conditions for $S\subseteq
\mathbb{R}^{n}$ the following implication holds:
\begin{equation}\label{intro-2}
S\mbox{ semicomputable } \Rightarrow S \mbox{ computable.}
\end{equation}
That (\ref{intro-2}) does not hold in general we conclude from the
fact that (\ref{fc-f0c}) does not hold in general. However
(\ref{intro-2}) does hold under some topological conditions on
$S$.

In order to see what is the role of topology in view of
(\ref{intro-2}), let us first observe the simple case when $S$ is
a line segment in $\mathbb{R}$. In \cite{miller} it is given an
example of a number $\gamma \in \mathbb{R}$ such that $[\gamma
,1]$ is a semicomputable, but not a computable subset of
$\mathbb{R}$.
On the other hand, if $a$ and $b$ are
computable numbers such that $a<b$, then $[a,b]$ is a computable
set. So (\ref{intro-2}) does not hold in general if $S$ is a line
segment in $\mathbb{R}$, but it does hold under additional
assumption that the endpoints of $S$ are computable.

The line segments and the arcs are the same in $\mathbb{R}$, but
in higher-dimensional Euclidean spaces the arcs are much more
general than the line segments. In view of the previous fact the
following question arises: does (\ref{intro-2}) holds if $S$ is an
arc in $\mathbb{R}^{n} $ with computable endpoints?

The answer to this question is not obvious. That the answer is
affirmative follows from the more general result of Miller
\cite{miller}: every semicomputable topological sphere in
$\mathbb{R}^{n} $ is computable and every semicomputable cell in
$\mathbb{R}^{n} $ with computable boundary sphere is computable.
Miller's pioneer work regarding conditions under which
(\ref{intro-2}) holds shows that topology has an important role in
view of these conditions.

That these results hold in a larger class of computable metric
spaces were shown in \cite{lmcs:cell}. The more general result was
later proved in \cite{lmcs:mnf}: (\ref{intro-2}) holds if $S$ is a
compact manifold with computable boundary (see also \cite{apal}).

Topological properties can force a semicomputable set $S$ to be
computable not just when $S$ is locally Euclidean. Chainable and
circularly chainable continua are generalizations of arcs and
topological circles and it is proved that (\ref{intro-2}) holds if
$S$ is a continuum chainable from $a$ to $b$, where $a$ and $b$
are computable points, or $S$ is a circularly chainable continuum
which is not chainable \cite{jucs, compintpoint}. Certain results when the complement of $S$ is disconnected can be found in \cite{jucs, tocs}.

The notions of semicomputable and computable set can be
generalized to noncompact sets and it turns out that
(\ref{intro-2}) does not hold in general if $S$ is a (noncompact)
1-manifold with computable boundary \cite{lmcs:1mnf}. However, it
is proved that (\ref{intro-2}) holds if $S$ is a 1-manifold with
computable boundary under additional condition that $S$ has
finitely many connected components. Certain conditions under which
(\ref{intro-2}) holds if $S$ is the graph of a function can be
found in \cite{brattka-pl}.


On the other hand, Kihara constructed in \cite{kihara}, as the
answer to a question in \cite{ziegler}, an  example of
a nonempty semicomputable compact set in the plane which is simply
connected (in fact, it is contractible) and which does not contain
any computable point. There also exists a semicomputable set of a
positive measure without a computable point \cite{tanaka}.

In Euclidean space a set is semicomputable if and only if it is
co-computably enumerable. That a set $S\subseteq \mathbb{R}^{n} $
is co-computably enumerable (co-c.e.) means that its complement
$\mathbb{R}^{n}\setminus S$  can be effectively covered by open
balls. For example the famous Mandelbrot set is co-c.e.\ (see
\cite{hert2}).

In this paper we put the investigation of conditions under which
(\ref{intro-2}) holds into the more general ambient space:
computable topological space. The notion of a computable
topological space is not new, for example see \cite{jucs-we,jucs-we2}. We
will use the notion of a computable topological space which corresponds to the notion of a SCT$_{2}$ space from \cite{jucs-we2} and we will investigate some of its aspects. We will
see how to each computable metric space can be naturally
associated a computable topological space and how the notions of a
semicomputable and a computable set can be easily extend to
computable topological spaces.

The central part of this paper will be the proof of the main
result, i.e.\ the proof of the fact that (\ref{intro-2}) holds in
any computable topological space if $S$ is a compact manifold with
computable boundary. This will be a generalization of the result
from \cite{lmcs:mnf}. Although we will rely on certain ideas from
\cite{lmcs:mnf}, the main challenge will be to adopt ideas and
techniques from \cite{lmcs:mnf}, which  depend on the metric $d$
in a computable metric space $(X,d,\alpha )$, to an ambient in
which we do not have any metric. For example, the notion that a
set $S$ is computable up to a set $T$, which means that for each
$k\in \mathbb{N}$ we can effectively find finitely many points
$x_{0} ,\dots ,x_{n} $ such that each point of $S$ is
$2^{-k}$-close to some $x_{i} $ and each $x_{i} $ is
$2^{-k}$-close to some point of $T$, was essential in
\cite{lmcs:mnf} and it is not obvious how to transfer it in a
nonmetric setting. Another example is the notion of the formal
diameter of a set in a computable metric space which is a
computable analogue of the diameter of a set in a metric space and
which clearly does not make sense in a (computable) topological
space.

The generalization of the result for manifolds from
\cite{lmcs:mnf} to computable topological spaces does not only
show that a metric in this context is not really important, but it
also provides a possible tool for dealing with the problem of
computability of a semicomputable noncompact set $S$ in a
computable metric space $(X,d,\alpha )$. Namely, using a
construction similar to the one-point compactification, we can
assign to $(X,d,\alpha )$ a computable topological space $T$ in
such a way that, under this construction, $S$ maps to a compact
set $S'$ in $T$ and such that the computability of $S'$ in $T$
implies the computability of $S$ in $(X,d,\alpha )$. We will see
how this gives that a semicomputable set in a computable metric
space homeomorphic to  $\mathbb{R}^{n} $ (for some $n$) must be
computable.

It should be mentioned  that the uniform version of the result from \cite{lmcs:mnf}
 does not hold in general: there exists a
 sequence $(S_{i} )$ of topological circles in $\mathbb{R}^{2}$
 such that $S_{i} $ is uniformly semi-computable, but
 not uniformly computable (Example 7 in \cite{jucs}).

Here is how the paper is organized. In Section \ref{sect-prelim} we state some basic definitions and facts. In Section \ref{sect-cts} we study the notion of a computable topological space and in Section \ref{sect-escs}  we examine effective separations of compact sets in computable topological spaces. In Section \ref{sect-lce} we introduce the notion of local computable enumerability of a set as a preparation for Sections \ref{sect-scm} and \ref{sect-scm-wb} in which we prove our main result: a semicomputable manifold in a computable topological space is computable if its boundary is semicomputable. In Section \ref{sect-cas} we reduce the problem of computability of \emph{noncompact} semicomputable sets in a computable metric space to the problem of computability of \emph{compact} semicomputable sets in a computable topological space.

\section{Computable metric spaces and preliminaries} \label{sect-prelim}
In this section we give some basic facts about computable metric
spaces and some other preliminary facts. See \cite{per, we,
turing, we2, br-we, br-pr, jucs}.
\subsection{Computable functions $\mathbb{N}^{k} \rightarrow
\mathbb{R}$}

 Let $k\in \mathbb{N}$, $k\geq 1$. A function $f:\mathbb{N}^{k}
\rightarrow \mathbb{Q}$ is said to be \textbf{computable} if there
exists computable (i.e.\ recursive) functions $f_{0},f_{1}
,f_{2}:\mathbb{N}^{k} \rightarrow \mathbb{N}$ such that
$$f(x)=(-1)^{f_{0} (x)}\frac{f_{1} (x)}{f_{2} (x)+1}$$
for each $x\in \mathbb{N}^{k} $. A function $f:\mathbb{N}^{k}
\rightarrow \mathbb{R}$ is said to be \textbf{computable} if there
exists a computable function $F:\mathbb{N}^{k+1}\rightarrow
\mathbb{Q}$ such that $$|f(x)-F(x,i)|<2^{-i}$$ for each $x\in
\mathbb{N}^{k} $ and each $i\in \mathbb{N}$. Of course, a function
$\mathbb{N}^{k} \rightarrow \mathbb{R}^{n} $ or $\mathbb{N}^{k}
\rightarrow \mathbb{Q}^{n} $, where $n\in \mathbb{N}$, $n\geq 1$,
will be called \textbf{computable} if its component functions are
computable.

Some elementary properties of computable functions $\mathbb{N}^{k}
\rightarrow \mathbb{R}$ are stated in the following proposition.
\begin{proposition} \label{NuR}
\begin{enumerate}

\item[(i)] If $f,g:\mathbb{N}^{k} \rightarrow \mathbb{R}$ are
computable, then $f+g,f-g, f\cdot g:\mathbb{N}^{k} \rightarrow
\mathbb{R}$ are computable.

\item[(ii)] If $f:\mathbb{N}^{k} \rightarrow \mathbb{R}$ and
$F:\mathbb{N}^{k+1} \rightarrow \mathbb{R}$ are functions such
that $F$ is computable and $|f(x)-F(x,i)|<2^{-i}$ for each $x\in
\mathbb{N}^{k}$ and $i\in \mathbb{N}$, then $f$ is computable.

\item[(iii)] If $f,g:\mathbb{N}^{k} \rightarrow \mathbb{R}$ are
computable functions, then the set $\{x\in \mathbb{N}^{k} \mid
f(x)>g(x)\}$ is computably enumerable. \qed

\end{enumerate}
\end{proposition}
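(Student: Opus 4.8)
The plan is to prove the three parts in the order (ii), (i), (iii), since part~(ii) is the natural workhorse for the other two: it frees us from having to hit the exact rate $2^{-i}$ and lets us approximate with any conveniently packaged ``second-order'' computable approximation, so I would establish it first even though it is stated second.

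For (ii), I would unpack computability of $F$ into a computable $\Phi\colon\mathbb{N}^{k+2}\rightarrow\mathbb{Q}$ with $|F(x,j)-\Phi(x,j,\ell)|<2^{-\ell}$ for all $x,j,\ell$, and then set $G(x,i):=\Phi(x,i+2,i+2)$. This $G\colon\mathbb{N}^{k+1}\rightarrow\mathbb{Q}$ is computable (composition of recursive functions with $\Phi$), and the triangle inequality gives
\[
|f(x)-G(x,i)|\leq|f(x)-F(x,i+2)|+|F(x,i+2)-\Phi(x,i+2,i+2)|<2^{-(i+2)}+2^{-(i+2)}<2^{-i},
\]
so $G$ witnesses computability of $f$.

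For (i), fix computable $F,G\colon\mathbb{N}^{k+1}\rightarrow\mathbb{Q}$ with $|f(x)-F(x,i)|<2^{-i}$ and $|g(x)-G(x,i)|<2^{-i}$. For $f\pm g$ the function $(x,i)\mapsto F(x,i+1)\pm G(x,i+1)$ is computable and approximates $f\pm g$ to within $2^{-i}$, so we conclude either directly from the definition or via (ii). The only case needing care is $f\cdot g$, where one needs a computable bound: $M(x):=|F(x,0)|+|G(x,0)|+2$ is computable and satisfies $M(x)>|f(x)|$, $M(x)>|g(x)|$, and $M(x)>|G(x,i)|$ for every $i$. Given $(x,i)$ one computes an index $j=j(x,i)$ with $2M(x)\cdot 2^{-j}<2^{-i}$ (a recursive search, since $2^j$ is an integer and $2M(x)2^i$ a rational given by recursive data), and then
\[
|f(x)g(x)-F(x,j)G(x,j)|\leq|f(x)|\cdot|g(x)-G(x,j)|+|G(x,j)|\cdot|f(x)-F(x,j)|<2M(x)\cdot 2^{-j}<2^{-i};
\]
since $(x,i)\mapsto F(x,j(x,i))G(x,j(x,i))$ is a computable function $\mathbb{N}^{k+1}\rightarrow\mathbb{Q}$, this establishes computability of $f\cdot g$ (or, again, invoke (ii)). I expect this index/constant bookkeeping — checking that $M$ and $j(\cdot,\cdot)$ really are recursive in the given data — to be the main, but only mild, obstacle; there is no conceptual difficulty.

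For (iii), with $F,G$ as above I would show that $f(x)>g(x)$ holds if and only if there exists $i$ with $F(x,i)-G(x,i)>2^{-i+1}$. Indeed, if $\delta:=f(x)-g(x)>0$, choose $i$ with $2^{-i+2}<\delta$; then $F(x,i)-G(x,i)>(f(x)-2^{-i})-(g(x)+2^{-i})=\delta-2^{-i+1}>2^{-i+1}$. Conversely, $F(x,i)-G(x,i)>2^{-i+1}$ forces $f(x)-g(x)>(F(x,i)-G(x,i))-2^{-i+1}>0$. The relation ``$F(x,i)-G(x,i)>2^{-i+1}$'' in the variables $(x,i)$ is decidable, being a comparison of rational numbers produced by recursive functions, so $\{x\in\mathbb{N}^{k}\mid f(x)>g(x)\}$ is the projection onto $\mathbb{N}^{k}$ of a recursive relation, hence computably enumerable.
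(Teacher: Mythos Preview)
Your proof is correct in all three parts; the bookkeeping for the product case and the equivalence in (iii) are handled carefully and go through as written. Note, however, that the paper does not prove this proposition at all: it is stated with a terminal \qed and treated as a standard fact from computable analysis, so there is no paper proof to compare against.
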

\subsection{Computable metric spaces}
A triple $(X,d,\alpha )$ is said to be a \textbf{computable metric
space} if $(X,d)$ is a metric space and $\alpha =(\alpha _{i} )$
is a sequence whose range is dense in $(X,d)$ and such that the
function $\mathbb{N}^{2}\rightarrow \mathbb{R}$, $$(i,j)\mapsto
d(\alpha _{i} ,\alpha _{j} )$$ is computable (see \cite{brattka-pl,br-pr,we2,bsl}). For example, if
$n\geq 1$ and $d$ is the Euclidean metric on $\mathbb{R}^{n} $,
then for any computable function $\alpha :\mathbb{N}\rightarrow
\mathbb{R}^{n}$ whose range is dense in $\mathbb{R}^{n} $ we have
that $(\mathbb{R}^{n} ,d,\alpha )$ is a computable metric space.
(Such a function $\alpha $ certainly exists: we can take a
computable surjection $\alpha :\mathbb{N}\rightarrow
\mathbb{Q}^{n} $.)

Let us recall the notion of the Hausdorff distance. If $(X,d)$ is
a metric space and $S$ and $T$ nonempty compact sets in this
space, we define their \textbf{Hausdorff distance} $d_{H} (S,T)$
by
$$d_{H} (S,T)=\inf\{\varepsilon >0\mid S\approx_{\varepsilon
}T\},$$ where $S\approx_{\varepsilon }T$ means that for each $x\in
S$ there exists $y\in T$ such that $d(x,y)<\varepsilon $ and for
each $y\in T$ there exists $x\in S$ such that $d(y,x)<\varepsilon
$.

Let $(X,d,\alpha )$ be a computable metric space and $x \in X$.
Then for each $k\in \mathbb{N}$ there exists $i\in \mathbb{N}$
such that $d(x,\alpha _{i} )<2^{-k}$. We say that $x $ is a
\textbf{computable point} in $(X,d,\alpha )$ if there exists a
computable function $f:\mathbb{N}\rightarrow \mathbb{N}$ such that
\begin{equation}\label{comp-point}
d(x ,\alpha _{f(k)})<2^{-k}
\end{equation}
for each $k\in \mathbb{N}$.

Suppose now $S$ is a nonempty compact set in $(X,d)$. Then the
density of $\alpha $ implies that for each $k\in \mathbb{N}$ there
exists a nonempty finite subset $A$ of $\{\alpha _{i} \mid i\in
\mathbb{N}\}$ such that $d_{H} (S,A)<2^{-k}$. This fact naturally
leads to a definition of a computable (compact) set.

First, we will fix some effective enumeration of all nonempty finite subsets of $\mathbb{N}$. To do this, we will use the following notion.

Let $k,n\in \mathbb{N}$, $k,n\geq 1$, and $\Phi :\mathbb{N}^{k}
\rightarrow \mathcal{P}(\mathbb{N}^{n} )$, where
$\mathcal{P}(\mathbb{N}^{n} )$ denotes the power set of
$\mathbb{N}^{n}$. We say that $\Phi$ is \textbf{computably finite
valued} (c.f.v.) if $$\{(x,y)\in \mathbb{N}^{k}\times
\mathbb{N}^{n} \mid y\in \Phi (x)\}$$ is a computable subset of
$\mathbb{N}^{k+n}$ and there exists a computable function $\varphi
:\mathbb{N}^{k} \rightarrow \mathbb{N}$ such that $\Phi
(x)\subseteq \{0,\dots , \varphi (x)\}^{n} $ for each $x\in
\mathbb{N}^{k}$.

From now on, let $\mathbb{N}\rightarrow \mathcal{P}(\mathbb{N})$,
\begin{equation}\label{j-[j]}
j\mapsto [j],    
\end{equation}
be some fixed c.f.v.\ function whose image is the set of all nonempty finite subsets of $\mathbb{N}$ (such a function certainly exists). Hence, $([j])_{j\in \mathbb{N}}$ is an effective enumeration of all nonempty finite subsets of $\mathbb{N}$.

Let $(X,d,\alpha )$ be a computable metric space. For $j\in
\mathbb{N}$ we define $$\Lambda _{j} =\{\alpha _{i} \mid i\in
[j]\}.$$ Let $S$ be a compact set in $(X,d)$. We say that $S$ is a
\textbf{computable set} in $(X,d,\alpha )$ if $S=\emptyset $ or
there exists a computable function $f:\mathbb{N}\rightarrow
\mathbb{N}$ such that $$d_{H} (S,\Lambda _{f(k)})<2^{-k}$$ for
each $k\in \mathbb{N}$. It is not hard to conclude that this
definition does not depend on the choice of the function $([j])_{j\in \mathbb{N}}$ (see Proposition \ref{cfv}).

If $(X,d,\alpha )$ is a computable metric space, $i\in \mathbb{N}$
and $r$ a positive rational number, then we say that $B(\alpha
_{i} ,r)$ is a \textbf{rational open ball} in $(X,d,\alpha )$.
Here, for $x\in X$ and $r>0$, we denote by $B(x,r)$ the open ball
of radius $r$ centered at $x$, i.e.\ $B(x,r)=\{y\in X\mid
d(x,y)<r\}$. A finite union of rational open balls will be called
a \textbf{rational open set} in $(X,d,\alpha )$.

Let $q:\mathbb{N}\rightarrow \mathbb{Q}$ be some fixed computable
function whose image is the set of all positive rational numbers
and let $\tau_{1} ,\tau_{2} :\mathbb{N}\rightarrow \mathbb{N}$ be
some fixed computable functions such that $\{(\tau _{1} (i), \tau
_{2} (i))\mid i\in \mathbb{N}\}=\mathbb{N}^{2}.$

Let $(X,d,\alpha )$ be a computable metric space.  Let $(\lambda
_{i} )_{i\in \mathbb{N}}$ be the sequence of points in $X$ defined
by $\lambda _{i} =\alpha _{\tau _{1} (i)}$ and let $(\rho _{i}
)_{i\in \mathbb{N}}$ be the sequence of rational numbers defined
by $\rho _{i} =q_{\tau _{2} (i)}$. For $i\in \mathbb{N}$ we define
\begin{equation}\label{defIi}
I_{i}=B(\lambda_{i}   ,\rho_{i}  ).
\end{equation}
 Note that  $\{I_{i} \mid
i\in \mathbb{N}\}$ is the family of all open rational balls in
$(X,d,\alpha )$. For $j\in \mathbb{N}$ we define
$$J_{j}=\bigcup_{i\in [j]}I_{i}.$$
Clearly  $\{J_{i} \mid i\in \mathbb{N}\}$ is the family of all
rational open sets in $(X,d,\alpha )$.

A closed set $S$ in $(X,d)$ is said to be \textbf{computably
enumerable} (c.e.) in $(X,d,\alpha )$ if the set $\{i\in
\mathbb{N}\mid I_{i} \cap S\neq \emptyset \}$ is c.e. A compact
set $S$ in $(X,d)$ is said to be \textbf{semicomputable} in
$(X,d,\alpha )$ if the set $\{j\in \mathbb{N}\mid S\subseteq J_{j}
\}$ is c.e. It is not hard to see that these definitions do not
depend on the choice of the functions $q$, $\tau _{1} $, $\tau
_{2} $ and $([j])_{j\in \mathbb{N}}$.

We have the following characterization of a computable set
(Proposition 2.6 in \cite{lmcs:mnf}):
\begin{equation}\label{compset-ce-semi}
S\mbox{ computable in }(X,d,\alpha ) \Leftrightarrow S\mbox{ c.e.\
and semicomputable in }(X,d,\alpha ).
\end{equation}

Let $(X,d,\alpha )$ be a computable metric space and $x\in X$. If
$x$ is a computable point in $(X,d,\alpha )$, then there exists a
computable function $f:\mathbb{N}\rightarrow \mathbb{N}$ such that
(\ref{comp-point}) holds. Since for all $a,b,c\in X$ we have
$|d(a,c)-d(b,c)|\leq d(a,b)$, for all $i,k\in \mathbb{N}$ we have
$$|d(x,\alpha _{i}) -d(\alpha _{f(k)},\alpha _{i} )|\leq d(x,\alpha
_{f(k)})<2^{-k}$$ and it follows from Proposition \ref{NuR}(ii)
that the function $\mathbb{N}\rightarrow \mathbb{R}$, $i\mapsto
d(x,\alpha _{i} )$ is computable. Thus the function
$\mathbb{N}\rightarrow \mathbb{R}$, $i\mapsto d(x,\lambda  _{i} )$
is also computable. For $i\in \mathbb{N}$ we have
$$x\in I_{i} \Leftrightarrow d(x,\lambda _{i} )<\rho _{i} $$ and
Proposition \ref{NuR}(iii) implies that the set $\{i\in
\mathbb{N}\mid x\in I_{i} \}$ is c.e.

Conversely, if the set $\{i\in \mathbb{N}\mid x\in I_{i} \}$ is
c.e., then the set $\Omega =\{(k,i)\in \mathbb{N}^{2}\mid x\in
I_{i}$ and $\rho _{i} <2^{-k} \}$ is also c.e.\ and since for each
$k\in \mathbb{N}$ there exists $i\in \mathbb{N}$ such that
$(k,i)\in \Omega $, there exists
a computable function $f:\mathbb{N}\rightarrow \mathbb{N}$ such
that $(k,f(k))\in \Omega $ for each $k\in \mathbb{N}$. So
$d(x,\lambda _{f(k)})<2^{-k}$ for each $k\in \mathbb{N}$ and it
follows that $x$ is a computable point. We have the following
conclusion:
\begin{equation}\label{comp-point-c.e.}
x\mbox{ computable point in }(X,d,\alpha )\Leftrightarrow \{i\in
\mathbb{N}\mid x\in I_{i} \}\mbox{ c.e.\ set}.
\end{equation}

Let $(X,d,\alpha )$ be a computable metric space and let $S\subseteq X$. We say that $S$ is a \textbf{co-computably enumerable} (co-c.e.) set in $(X,d,\alpha )$ if there exists a c.e.\ set $A\subseteq \mathbb{N}$ such that $$X\setminus S=\bigcup_{i\in A}I_{i} .$$ We say that $S$ is a \textbf{computable closed} set if $S$ is both c.e.\ and co-c.e.

Each computable set is a computable closed set \cite{lmcs:mnf}. Conversely, a computable closed set need not be computable even if it is compact. However, if $(X,d,\alpha )$ has the effective covering property (for the definition see \cite{br-pr}) and compact closed balls, then for compact sets the notions ``computable'' and ``computable closed'' coincide (Proposition 3.6 in \cite{lmcs:1mnf}).
This in particular holds in the previously described computable metric space $(\mathbb{R}^{n},d,\alpha )$.

\subsection{Formal properties}
Let $(X,d)$ be a metric space, $x,y\in X$ and $r,s>0$. If
$d(x,y)\geq r+s$, then $B(x,r)\cap B(y,s)=\emptyset $. Conversely,
if $B(x,r)\cap B(y,s)=\emptyset $, then inequality $d(x,y)\geq
r+s$ holds if $(X,d)$ is Euclidean space, but it does not hold in
general (for example, if $d$ is the discrete metric).
Nevertheless, we will use this inequality (actually, the strong
inequality) to introduce certain relation of formal disjointness
between rational open balls $I_{i} $ and $I_{j} $ (actually
between the numbers $i$ and $j$) in a computable metric space.

Similarly, if $d(x,y)+s\leq r$, then $B(y,s)\subseteq B(x,r)$, but
the converse of this statement does not hold in general. Although
this inequality does not characterize the fact that
$B(y,s)\subseteq B(x,r)$, it will be useful for us in computable
metric spaces to introduce certain notion of formal inclusion.

 Let $(X,d,\alpha )$ be a computable metric space.
 Let $i,j\in
\mathbb{N}$. (Recall the definition
 (\ref{defIi}).) We say that $I_{i} $ and $I_{j} $ are
\textbf{formally disjoint} and write $I_{i} \diamond I_{j} $ if
$$d(\lambda _{i} ,\lambda _{j} )>\rho _{i} +\rho _{j} .$$ We say
that $I_{i} $ is \textbf{formally contained} in $I_{j} $ and write
$I_{i} \subseteq _{F}I_{j} $ if
$$d(\lambda _{i} ,\lambda _{j} )+\rho _{i}<\rho _{j} .$$
The main properties of these two relations are stated in the next
proposition.
\begin{proposition} \label{svojstva-diamond-F}
Let $(X,d,\alpha )$ be a computable metric space. Then the sets
\begin{equation}\label{sets-1}
\{(i,j)\in \mathbb{N}^{2}\mid I_{i} \diamond I_{j} \}\mbox{ and
}\{(i,j)\in \mathbb{N}^{2}\mid I_{i} \subseteq _{F}I_{j} \}
\end{equation}
 are
c.e. Furthermore,  the following
holds:
\begin{enumerate}
\item if $i,j\in \mathbb{N}$ are such that $I_{i} \diamond I_{j}$,
then $I_{i} \cap I_{j} =\emptyset $;

\item if $i,j\in \mathbb{N}$ are such that $I_{i} \subseteq _{F}
I_{j}$, then $I_{i}\subseteq I_{j}$;

\item if $x,y\in X$ are such that $x\neq y$, then there exist
$i,j\in \mathbb{N}$ such that $x\in I_{i} $, $y\in I_{j} $ and
$I_{i} \diamond I_{j} $;

\item if $i,j\in \mathbb{N}$ and $x\in I_{i} \cap  I_{j} $, then
there exists $k\in \mathbb{N}$ such that $x\in I_{k} $, $I_{k}
\subseteq _{F}I_{i} $ and $I_{k} \subseteq _{F}I_{j} $; moreover,
if $A\subseteq \{\alpha _{i} \mid i\in \mathbb{N}\}$ is a dense
set in $(X,d)$, $k$ can be chosen so that $\lambda _{k} \in A$;

\item if $i,j\in \mathbb{N}$ are such that $I_{i} \diamond I_{j}$,
then $I_{j} \diamond I_{i} $; \item if $i,j,k\in \mathbb{N}$ are
such that $I_{i} \subseteq _{F} I_{j}$ and $I_{j}\subseteq
_{F}I_{k} $, then $I_{i}\subseteq_{F} I_{k}$; \item if $i,j,k\in
\mathbb{N}$ are such that $I_{k} \subseteq _{F}I_{i} $ and
$I_{i}\diamond I_{j} $, then $I_{k} \diamond I_{j} $.
\end{enumerate}
\end{proposition}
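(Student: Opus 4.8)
The plan is to settle the computable-enumerability assertion first, via Proposition \ref{NuR}, and then to check the seven listed properties one by one. All seven are elementary consequences of the triangle inequality and of the symmetry of $d$; the only ones that take some care are (3) and (4), where density of $\alpha$ is used and the rational radii of the auxiliary balls must be chosen with enough slack.

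For the sets in (\ref{sets-1}): since the function $(i,j)\mapsto d(\alpha_i,\alpha_j)$ is computable and $\tau_1,\tau_2,q$ are computable, the functions $\mathbb{N}^2\to\mathbb{R}$ given by $(i,j)\mapsto d(\lambda_i,\lambda_j)$, $(i,j)\mapsto\rho_i+\rho_j$ and $(i,j)\mapsto d(\lambda_i,\lambda_j)+\rho_i$ are computable by Proposition \ref{NuR}(i). Hence, by Proposition \ref{NuR}(iii), the sets $\{(i,j)\mid d(\lambda_i,\lambda_j)>\rho_i+\rho_j\}$ and $\{(i,j)\mid\rho_j>d(\lambda_i,\lambda_j)+\rho_i\}$ are c.e., and these are exactly the two sets in (\ref{sets-1}).

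For (1), (2), (5), (6) and (7) I would just run the triangle inequality. For (1): if $x\in I_i\cap I_j$ then $d(\lambda_i,\lambda_j)\le d(\lambda_i,x)+d(x,\lambda_j)<\rho_i+\rho_j$, contradicting $I_i\diamond I_j$. For (2): if $x\in I_i$ then $d(x,\lambda_j)\le d(x,\lambda_i)+d(\lambda_i,\lambda_j)<\rho_i+d(\lambda_i,\lambda_j)<\rho_j$. Item (5) is immediate from $d(\lambda_i,\lambda_j)=d(\lambda_j,\lambda_i)$. For (6): adding the two defining inequalities, $d(\lambda_i,\lambda_k)+\rho_i\le d(\lambda_i,\lambda_j)+d(\lambda_j,\lambda_k)+\rho_i<\rho_j+d(\lambda_j,\lambda_k)<\rho_k$. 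For (7): $I_k\subseteq_F I_i$ gives $d(\lambda_k,\lambda_i)<\rho_i-\rho_k$, hence $d(\lambda_k,\lambda_j)\ge d(\lambda_i,\lambda_j)-d(\lambda_k,\lambda_i)>(\rho_i+\rho_j)-(\rho_i-\rho_k)=\rho_j+\rho_k$.

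For (3), given $x\neq y$ I would put $\varepsilon=d(x,y)>0$, use density of $\alpha$ to pick $a,b$ with $d(x,\alpha_a),d(y,\alpha_b)<\varepsilon/10$, and choose rationals $r,s\in(\varepsilon/10,\varepsilon/5)$; then $x\in B(\alpha_a,r)$, $y\in B(\alpha_b,s)$ and $d(\alpha_a,\alpha_b)\ge\varepsilon-d(x,\alpha_a)-d(y,\alpha_b)>4\varepsilon/5>r+s$, so the two balls are formally disjoint, and they coincide with $I_i,I_j$ for suitable $i,j$. For (4), given $x\in I_i\cap I_j$ I would put $\delta=\min\{\rho_i-d(x,\lambda_i),\rho_j-d(x,\lambda_j)\}>0$, pick $a$ with $d(x,\alpha_a)<\delta/4$ (choosing $\alpha_a\in A$ when a dense set $A\subseteq\{\alpha_i\mid i\in\mathbb{N}\}$ is prescribed) and a rational $r\in(\delta/4,\delta/2)$; then $x\in B(\alpha_a,r)$ and, using $d(x,\lambda_i)\le\rho_i-\delta$, $d(\alpha_a,\lambda_i)+r\le d(\alpha_a,x)+d(x,\lambda_i)+r<3\delta/4+(\rho_i-\delta)<\rho_i$, i.e.\ $B(\alpha_a,r)\subseteq_F I_i$, and likewise $B(\alpha_a,r)\subseteq_F I_j$; since $(\tau_1(k),\tau_2(k))$ ranges over all of $\mathbb{N}^2$, this ball equals $I_k$ for some $k$ with $\lambda_k=\alpha_a$, which yields the additional clause. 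I do not expect a genuine obstacle here; the only delicate point is arranging these radius choices so that the open-ball memberships stay strict while the formal relations still have room to hold.
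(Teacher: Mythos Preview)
Your proposal is correct and follows essentially the same approach as the paper: the c.e.\ claims are reduced to Proposition~\ref{NuR}, items (1), (2), (5), (6), (7) are handled by direct triangle-inequality computations, and for (3) and (4) you pick nearby centers from the dense sequence with rational radii having enough slack, just as the paper does (it uses $r=d(x,y)/4$ for (3) and a $q_v$ with $d(x,\lambda_i)+2q_v<\rho_i$ for (4), but the arithmetic is equivalent to yours).
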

\begin{proof}
It follows from the definition of $I_{i} \diamond I_{j} $ and $I_{i} \subseteq _{F}I_{j} $
and Proposition \ref{NuR} that the sets in (\ref{sets-1}) are c.e. Furthermore,
claims (1) and (2) obviously hold.

Let us prove (3). Suppose  $x,y\in X$, $x\neq y$. Let $r =
\frac{d(x,y)}{4}$. Choose  $k \in \mathbb{N}$ so that  $q_k < r$
and
 $u$, $v \in \mathbb{N}$ so that
$$x \in B(\alpha_u, q_k) \text{~ and ~} y \in B(\alpha_v, q_k).$$
There exist $i$, $j \in \mathbb{N}$ such that $(u,k) = \big(
\tau_1(i), \tau_2(i) \big)$ and $(v,k) = \big( \tau_1(j),
\tau_2(j) \big)$ and therefore $(\alpha_u, q_k)=(\lambda _{i}
,\rho _{i} )$ and $(\alpha_v, q_k)=(\lambda _{j} ,\rho _{j} )$. So
$$
x \in I_i \text{~ and ~} y \in I_j.
$$
We claim that $I_i\diamond I_j$. Suppose the opposite. Then
$d(\lambda _{i} ,\lambda _{j} )\leq \rho _{i} +\rho _{j} $, i.e.\
$d(\alpha _{u},\alpha _{v})\leq 2q_{k} $. We have
$$ d(x,y) \leq d(x,\alpha_u) + d(\alpha_u,
\alpha_v) + d(\alpha_v,y) < q_k + 2q_k + q_k= 4q_k < 4r =
d(x,y),$$ i.e.\ $d(x,y)<d(x,y)$, a contradiction. Hence
$I_i\diamond I_j$.

Let us prove (4). Suppose $i,j\in \mathbb{N}$ and $x\in I_{i} \cap
I_{j} $. Then $d(x,\lambda _{i} )<\rho _{i}$ and $d(x,\lambda _{j}
)<\rho _{j} $. Choose $v\in \mathbb{N}$ such that
$$d(x,\lambda _{i} ) + 2q_{v} <
\rho _{i}  \mbox{ and }d(x,\lambda _{j} ) + 2q_{v} < \rho _{j} .$$
Choose $u \in \mathbb{N}$ so that $d(x,\alpha_u) < q_{v}$ and
$\alpha _{u}\in A$.

Let $k \in \mathbb{N}$ be such that $(\alpha _{u},q_{v}) =
(\lambda _{k} ,\rho _{k} )$. Then $x\in I_k$. Furthermore,
$$
d(\lambda _{k} , \lambda _{i} ) + \rho _{k}  = d(\alpha_u, \lambda
_{i} ) + q_v\leq d(\alpha_u, x) + d(x, \lambda _{i} ) + q_v< d(x,
\lambda _{i} ) + 2q_{v}< \rho _{i} .$$ Hence $I_k \subseteq_F
I_i$. In the same way we get $I_k \subseteq_F I_j$.

Claim (5) is obvious. It is straightforward to check that (6)
holds.

We now prove (7). Suppose $I_{k} \subseteq _{F}I_{i} $ and $I_{i}
\diamond I_{j} $.  Since $I_k \subseteq_F I_i$, we have
\begin{equation}
\label{ijk-1} \rho _{k}  < \rho _{i}  - d(\lambda _{i} , \lambda
_{k} ).
\end{equation}
We also have $ \rho _{i}  + \rho _{j}  < d(\lambda _{i} , \lambda
_{j}  ) \leq d(\lambda _{i} , \lambda _{k} ) + d(\lambda _{k} ,
\lambda _{j} )$, so $ \rho _{i}  + \rho _{j}  < d(\lambda _{i} ,
\lambda _{k} ) + d(\lambda _{k} , \lambda _{j} )$ and therefore
\begin{equation}
\label{ijk-2} \rho _{i} -d(\lambda _{i} , \lambda _{k} ) <   -\rho
_{j}+ d(\lambda _{k} , \lambda _{j} ).
\end{equation}
It follows from (\ref{ijk-1}) and (\ref{ijk-2}) that $\rho _{k}
<-\rho _{j}+ d(\lambda _{k} , \lambda _{j} )$, hence $I_k\diamond
I_j$.
\end{proof}
\subsection{Final remarks}  \label{subs-fr}
The following properties of c.f.v.\ functions will be useful.
\begin{proposition} \label{cfv}
\begin{enumerate}
\item[(1)] If $\Phi ,\Psi:\mathbb{N}^{k} \rightarrow
\mathcal{P}(\mathbb{N}^{n} )$ are c.f.v.\ functions, then the
function $\mathbb{N}^{k} \rightarrow \mathcal{P}(\mathbb{N}^{n}
)$, $x\mapsto \Phi (x)\cup \Psi (x)$ is  c.f.v.

\item[(2)] If $\Phi: \mathbb{N}^{k} \rightarrow
\mathcal{P}(\mathbb{N}^{n} )$ and $\Psi :\mathbb{N}^{k}
\rightarrow \mathcal{P}(\mathbb{N}^{l} )$ are c.f.v.\ functions,
then the function $\mathbb{N}^{k} \rightarrow
\mathcal{P}(\mathbb{N}^{n+l} )$, $x\mapsto \Phi (x)\times  \Psi
(x)$ is  c.f.v.

\item[(3)] If $\Phi ,\Psi:\mathbb{N}^{k} \rightarrow
\mathcal{P}(\mathbb{N}^{n} )$ are c.f.v.\ functions, then the sets
$\{x\in \mathbb{N}^{k} \mid \Phi (x)=\Psi (x)\}$ and $\{x\in
\mathbb{N}^{k} \mid \Phi (x)\subseteq \Psi (x)\}$ are computable.

\item[(4)] Let $\Phi :\mathbb{N}^{k} \rightarrow
\mathcal{P}(\mathbb{N}^{n} )$ and $\Psi :\mathbb{N}^{n}\rightarrow
\mathcal{P}(\mathbb{N}^{m} )$ be c.f.v.\ functions. Let $\Lambda
:\mathbb{N}^{k} \rightarrow \mathcal{P}(\mathbb{N}^{m} )$ be
defined by $$\Lambda (x)=\bigcup_{z\in \Phi (x)}\Psi  (z),$$ $x\in
\mathbb{N}^{k} $. Then $\Lambda  $ is a c.f.v.\ function.

\item[(5)] Let $\Phi :\mathbb{N}^{k} \rightarrow
\mathcal{P}(\mathbb{N}^{n} )$ be c.f.v$.$ and let $T\subseteq
\mathbb{N}^{n} $ be c.e. Then the set $S=\{x\in \mathbb{N}^{k}
\mid \Phi (x)\subseteq T\}$ is c.e. \qed
\end{enumerate}
\end{proposition}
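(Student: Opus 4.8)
The plan is to exploit the following observation throughout: if $\Phi$ is c.f.v.\ with bound $\varphi$, then from $x$ we can effectively produce the \emph{finite list} of all elements of $\Phi(x)$ — just run through the finitely many $y\in\{0,\dots,\varphi(x)\}^n$ and decide $y\in\Phi(x)$. Hence a bounded quantifier ``$\forall y\in\Phi(x)$'' (resp.\ ``$\exists y\in\Phi(x)$'') applied to a decidable predicate stays decidable, and applied to a c.e.\ predicate stays c.e. This is essentially all that is needed, and I would treat the parts in the stated order.

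For (1), the predicate $y\in\Phi(x)\cup\Psi(x)$ is the disjunction of the two given decidable predicates, hence decidable, and $\Phi(x)\cup\Psi(x)\subseteq\{0,\dots,\max(\varphi(x),\psi(x))\}^n$, so $\max(\varphi,\psi)$ is a suitable bound. For (2), $(y,z)\in\Phi(x)\times\Psi(x)$ is the conjunction of $y\in\Phi(x)$ and $z\in\Psi(x)$, hence decidable, and $\Phi(x)\times\Psi(x)\subseteq\{0,\dots,\max(\varphi(x),\psi(x))\}^{n+l}$. For (3), $\Phi(x)\subseteq\Psi(x)$ holds iff for every $y\in\{0,\dots,\varphi(x)\}^n$ the implication $y\in\Phi(x)\Rightarrow y\in\Psi(x)$ holds — a bounded universal quantifier over a decidable matrix, hence decidable — and $\Phi(x)=\Psi(x)$ is the conjunction of $\Phi(x)\subseteq\Psi(x)$ and $\Psi(x)\subseteq\Phi(x)$.

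For (4), membership is handled by: $w\in\Lambda(x)$ iff there is $z\in\{0,\dots,\varphi(x)\}^n$ with $z\in\Phi(x)$ and $w\in\Psi(z)$, a bounded existential quantifier over a decidable relation, hence decidable. For the bound, letting $\psi$ be the bound of $\Psi$, set $\ell(x)=\max\{\psi(z)\mid z\in\{0,\dots,\varphi(x)\}^n\}$; this is a maximum of finitely many values of a computable function, with the index set itself computed from $x$, so $\ell$ is computable, and $\Lambda(x)\subseteq\{0,\dots,\ell(x)\}^m$ because $\Psi(z)\subseteq\{0,\dots,\psi(z)\}^m\subseteq\{0,\dots,\ell(x)\}^m$ for every $z\in\Phi(x)\subseteq\{0,\dots,\varphi(x)\}^n$.

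Part (5) is where the only genuine care is required, since now $T$ is merely c.e., so $y\in T$ is only semi-decidable. The point is that from $x$ we can compute the finite list $\Phi(x)=\{y_1,\dots,y_s\}$ (with $s$ depending on $x$) exactly as in the opening remark; then $x\in S$ iff $y_1\in T,\dots,y_s\in T$, and this conjunction is semi-decidable by dovetailing the enumeration of $T$ and accepting $x$ once each of $y_1,\dots,y_s$ has appeared. Hence $S$ is c.e. The main (mild) obstacle across the whole proposition is bookkeeping rather than ideas: in (4) checking that the bound function $\ell$ is genuinely computable, and in (5) setting up the dovetailing correctly; there is no topological or analytic content, and Proposition~\ref{NuR} is not needed here.
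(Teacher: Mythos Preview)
Your proof is correct and complete; the paper itself gives no proof of this proposition (it is stated with a \qed\ and left to the reader), so there is nothing to compare against. Your handling of each part is the standard one, and in particular your treatment of (5) via listing $\Phi(x)$ and dovetailing the $T$-enumeration is exactly right; just note for completeness that when $\Phi(x)=\emptyset$ the conjunction is vacuous and $x$ is accepted immediately, which your formulation implicitly covers.
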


Let $\sigma :\mathbb{N}^{2}\rightarrow \mathbb{N}$
and $\eta:\mathbb{N}\rightarrow \mathbb{N}$ be some fixed computable
functions with the following property: $\{(\sigma (j,0),\dots
,\sigma (j,\eta(j)))\mid j\in \mathbb{N}\}$ is the set of all
finite nonempty sequences in $\mathbb{N}$. We use the following
notation: $(j)_{i}$ instead of $\sigma (j,i)$ and $\overline{j}$
instead of $\eta(j).$  Hence $$\{((j)_{0} ,\dots
,(j)_{\overline{j}})\mid j\in \mathbb{N}\}$$ is the set of all
finite nonempty sequences in $\mathbb{N}.$ 

It follows from Proposition \ref{cfv}(4) that the function $\mathbb{N}\rightarrow \mathcal{P}(\mathbb{N})$,
$j\mapsto \{(j)_{i} \mid 0\leq i\leq \overline{j}\}$ is c.f.v. Clearly, the image of this function is the set of all nonempty finite subsets of $\mathbb{N}$. This means that we can take this function as an effective enumeration introduced by (\ref{j-[j]}) and it will suitable for us to do so. Therefore, we assume that   
\begin{equation}\label{p2-eq}
[j]=\{(j)_{i} \mid 0\leq i\leq \overline{j}\}.
\end{equation}
for each $j\in \mathbb{N}$.

\section{Computable topological spaces} \label{sect-cts}
Proposition \ref{svojstva-diamond-F} is a motivation for the next
definition.

Let $(X,\mathcal{T})$ be a topological space and let $(I_{i} )$ be
a sequences in $\mathcal{T}$ such that $\{I_{i} \mid i\in
\mathbb{N}\}$ is a basis for the topology $\mathcal{T}$. A triple
$(X,\mathcal{T},(I_{i} ))$ is said to be a \textbf{computable
topological space} (see the definition of a SCT$_{2}$ space in \cite{jucs-we2}) if there exist  c.e.\ subsets $\mathcal{C}$ and
$\mathcal{D}$ of $\mathbb{N}^{2}$ with the following properties:
\begin{enumerate}
\item if $i,j\in \mathbb{N}$ are such that $(i,j)\in \mathcal{D}$,
then $I_{i} \cap I_{j} =\emptyset $;

\item if $i,j\in \mathbb{N}$ are such that $(i,j)\in \mathcal{C}$,
then $I_{i}\subseteq I_{j}$;

\item if $x,y\in X$ are such that $x\neq y$, then there exist
$i,j\in \mathbb{N}$ such that $x\in I_{i} $, $y\in I_{j} $ and
$(i,j)\in \mathcal{D} $;

\item if $i,j\in \mathbb{N}$ and $x\in I_{i} \cap I_{j} $, then
there exists $k\in \mathbb{N}$ such that $x\in I_{k} $, $(k,i)\in
\mathcal{C} $ and $(k,j)\in \mathcal{C} $.
\end{enumerate}
In this case we say that $\mathcal{C}$ and $\mathcal{D}$ are
\textbf{characteristic relations} for $(X,\mathcal{T},(I_{i} ))$.

Note the following: if $(X,\mathcal{T},(I_{i} ))$  is a computable
topological space, then $(X,\mathcal{T})$ is a second countable
Hausdorff space.

Let $(X,d,\alpha )$ be a computable metric space. Let
$\mathcal{T}_{d} $ denote the topology induced by $d$, i.e.\ the
set of all open sets in $(X,d)$. Let, for $i\in \mathbb{N}$, the
set $I_{i} $ be defined by (\ref{defIi}) (for fixed functions $q$,
$\tau _{1} $ and $\tau _{2} $). Then $\{I_{i} \mid i\in
\mathbb{N}\}$ is a basis for the topology $\mathcal{T}_{d}$ and by
Proposition \ref{svojstva-diamond-F} $(X,\mathcal{T}_{d},(I_{i}
))$ is a computable topological space; characteristic relations
are $\{(i,j)\in \mathbb{N}^{2}\mid I_{i} \subseteq _{F}I_{j} \}$
and $\{(i,j)\in \mathbb{N}^{2}\mid I_{i} \diamond I_{j} \}$. We
say that $(X,\mathcal{T}_{d},(I_{i} ))$ is the computable
topological space associated to $(X,d,\alpha )$.

Let $(X,\mathcal{T},(I_{i} ))$ be a computable topological space.
Let $x\in X$. We say that $x$ is a \textbf{computable point} in
$(X,\mathcal{T},(I_{i} ))$ if the set $\{i\in \mathbb{N}\mid x\in
I_{i} \}$ is c.e.

A closed set $S$ in $(X,\mathcal{T})$ is said to be
\textbf{computably enumerable} in $(X,\mathcal{T},(I_{i} ))$ if
$\{i\in \mathbb{N}\mid I_{i} \cap S\neq\emptyset \}$ is a c.e.\
set.

If $(X,\mathcal{T},(I_{i} ))$ is a computable topological space,
then for $j\in \mathbb{N}$ we define $J_{j} $ by $$J_{j}
=\bigcup_{i\in [j]}I_{i} .$$

Let $(X,\mathcal{T},(I_{i} ))$ be a computable topological space
and let $S$ be a compact set in $(X,\mathcal{T})$. We say that $S$
is a \textbf{semicomputable set} in $(X,\mathcal{T},(I_{i} ))$ if
$\{j\in \mathbb{N}\mid S\subseteq J_{j} \}$ is a c.e.\ set. We say
that $S$ is a \textbf{computable set} in $(X,\mathcal{T},(I_{i}
))$ if $S$ is computably enumerable and semicomputable in
$(X,\mathcal{T},(I_{i} ))$. These definitions are easily seen to
be independent on the choice of the function $([j])_{j\in \mathbb{N}}$.

\begin{proposition}\label{cms-cts}
Let $(X,d,\alpha )$ be a computable metric space and let
$(X,\mathcal{T}_{d},(I_{i} ))$ be the associated computable
topological space. Let $x\in X$ and $S\subseteq X$. The the
following equivalences hold:
\begin{enumerate}
\item[(i)] $x$ computable point in $(X,d,\alpha )$
$\Leftrightarrow$ $x$ computable point in
$(X,\mathcal{T}_{d},(I_{i} ))$;

\item[(ii)] $S$ c.e.\ set in $(X,d,\alpha )$ $\Leftrightarrow$ $S$
c.e.\ set in $(X,\mathcal{T}_{d},(I_{i} ))$;

\item[(iii)] $S$ semicomputable set in $(X,d,\alpha )$
$\Leftrightarrow$ $S$ semicomputable set in
$(X,\mathcal{T}_{d},(I_{i} ))$;

\item[(iii)] $S$ computable set in $(X,d,\alpha )$
$\Leftrightarrow$ $S$ computable set in $(X,\mathcal{T}_{d},(I_{i}
))$.
\end{enumerate}
\end{proposition}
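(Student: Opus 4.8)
The plan is to notice that the proposition is essentially a bookkeeping statement: by the very construction of the associated space, $(X,\mathcal{T}_d,(I_i))$ carries exactly the same combinatorial data as $(X,d,\alpha)$. Indeed, the basis $(I_i)$ is literally the sequence of rational open balls $I_i=B(\lambda_i,\rho_i)$ from (\ref{defIi}); the topology $\mathcal{T}_d$ is precisely the metric topology, so ``closed set in $(X,d)$'' and ``closed set in $(X,\mathcal{T}_d)$'' coincide, and likewise ``compact set'' means the same thing in both; and the sets $J_j=\bigcup_{i\in[j]}I_i$ are given by the same formula with the same fixed enumeration $([j])_{j\in\mathbb{N}}$. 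Granting this, each of the four equivalences is obtained by unwinding definitions.

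Concretely, for (i) I would invoke (\ref{comp-point-c.e.}): $x$ is a computable point in $(X,d,\alpha)$ iff $\{i\in\mathbb{N}\mid x\in I_i\}$ is c.e., and the right-hand condition is precisely the definition of $x$ being a computable point in $(X,\mathcal{T}_d,(I_i))$. For (ii), a closed set $S$ is c.e.\ in $(X,d,\alpha)$ iff $\{i\in\mathbb{N}\mid I_i\cap S\neq\emptyset\}$ is c.e., which is verbatim the condition defining c.e.\ sets in $(X,\mathcal{T}_d,(I_i))$; since the two structures have the same closed sets, (ii) follows. Item (iii) is the same in spirit: for compact $S$, being semicomputable in either structure is the assertion that $\{j\in\mathbb{N}\mid S\subseteq J_j\}$ is c.e., and both the family $(J_j)$ and the class of compact sets agree. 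Finally, for the last equivalence, in the computable topological space a set is computable iff it is c.e.\ and semicomputable by definition, whereas in the computable metric space the characterization (\ref{compset-ce-semi}) says $S$ is computable iff it is c.e.\ and semicomputable; combining this with (ii) and (iii) gives the claim.

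There is no real obstacle here — the content is that none of the metric-specific machinery causes a discrepancy. The only point requiring a moment's care is that the original definition of a computable set in $(X,d,\alpha)$ is phrased via the Hausdorff distance $d_H$, which has no analogue in the topological setting; rather than attempting to translate that formulation directly, I would route through the characterization (\ref{compset-ce-semi}) and thereby reduce the computability equivalence cleanly to (ii) and (iii).
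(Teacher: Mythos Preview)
Your proposal is correct and follows exactly the paper's approach: the paper's proof consists of a single sentence citing (\ref{comp-point-c.e.}) and (\ref{compset-ce-semi}), which are precisely the two nontrivial ingredients you identify for (i) and for the last equivalence, while (ii) and (iii) are indeed literal identities of definitions. Your additional remark that the Hausdorff-distance formulation of computability must be bypassed via (\ref{compset-ce-semi}) is the right observation and is implicit in the paper's citation of that equivalence.
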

\begin{proof}
This follows from (\ref{comp-point-c.e.}) and
(\ref{compset-ce-semi}).
\end{proof}

In this paper we prove that in any computable topological space
$(X,\mathcal{T},(I_{i} ))$ the implication $$S\mbox{
semicomputable }\Rightarrow S\mbox{ computable}$$ holds if $S$ is,
as a subspace of $(X,\mathcal{T})$, a manifold whose boundary is
computable. By Proposition \ref{cms-cts} this is a generalization
of the result from \cite{lmcs:mnf} for semicomputable manifolds in
computable metric spaces.

Regarding the definition of a computable topological space, the
natural question is this: if $(X,\mathcal{T},(I_{i} ))$ is a
computable topological space, do there exist $d$ and $\alpha $
such that $(X,d,\alpha )$ is a computable metric space whose
associated computable topological space is $(X,\mathcal{T},(I_{i}
))$? In the following example we get that the answer is negative:
$(X,\mathcal{T})$ need not be metrizable, moreover it need not be
even regular (recall that $(X,\mathcal{T})$ is always second
countable Hausdorff). The example is motivated by a classical
example of a Hausdorff space which is not regular (see
\cite{ch-vox}).

\begin{example1}
Let $c\in \mathbb{R}\setminus \mathbb{Q}$ be a computable number.
Let $\beta :\mathbb{N}\rightarrow \mathbb{Q}$ be a computable
surjection and let $\gamma :\mathbb{N}\rightarrow \mathbb{R}$ be
defined by $\gamma (i)=c+\beta (i)$. Then $\gamma $ is a
computable function.

Let $\alpha :\mathbb{N}\rightarrow \mathbb{R}$ be defined by
$$\alpha_{i} =\left\{\begin{tabular}{l}
             $\beta _{\tau _{1} (i)}$ if $\tau _{2} (i)\in  2 \mathbb{N}$,  \\
             $\gamma _{\tau _{1}(i)}$ if $\tau _{2} (i)\notin  2
             \mathbb{N}$.
              \end{tabular} \right.$$
Then $\alpha $ is a computable function and $\{\alpha _{i} \mid
i\in \mathbb{N}\}=\mathbb{Q}\cup (c+ \mathbb{Q}).$

Let $X=\mathbb{Q}\cup (c+ \mathbb{Q})$ and let $d$ be the
Euclidean metric on $X$. Then $(X,d,\alpha )$ is a computable
metric space. Let the sequences $(\lambda _{i} )$, $(\rho _{i} )$
and $(I_{i} )$ for this computable metric space be defined in the
standard way. For $i\in \mathbb{N}$ we define $$B_{i} =(I_{i} \cap
\mathbb{Q})\cup \{\lambda _{i} \}.$$ Let $\mathcal{D}=\{(i,j)\in
\mathbb{N}^{2}\mid I_{i} \diamond I_{j} \}$. Then $\mathcal{D}$ is
a c.e.\ set and $(i,j)\in \mathcal{D}$ clearly implies $B_{i} \cap
B_{j} =\emptyset $.

Suppose $x,y\in X$ are such that $x\neq y$. Then there exists
$i,j\in \mathbb{N}$ such that $x\in B_{i} $, $y\in B_{j} $ and
$(i,j)\in \mathcal{D}$. Namely, choose a positive rational number
$r$ such that $2r<d(x,y)$ and choose $i,j\in \mathbb{N}$ such that
$(x,r)=(\lambda _{i} ,\rho _{i} )$ and $(y,r)=(\lambda _{j} ,\rho
_{j} )$. Then $i$ and $j$ are the desired numbers.

Let $$\mathcal{C}=\{(i,j)\in \mathbb{N}^{2}\mid I_{i} \subseteq
_{F}I_{j} \mbox{ and }(\lambda _{i} =\lambda _{j} \mbox{ or
}(\lambda _{i} \neq \lambda _{j} \mbox{ and }\lambda _{i} \in
\mathbb{Q}))\}.$$ In general, if $f,g:\mathbb{N}^{k} \rightarrow
\mathbb{Q}$ are computable functions, then the set $\{x\in
\mathbb{N}^{k} \mid f(x)=g(x)\}$ is computable. Therefore the sets
$\{(i,j)\in \mathbb{N}^{2}\mid \beta _{i} =\beta _{j} \}$ and
$\{(i,j)\in \mathbb{N}^{2}\mid \gamma _{i} =\gamma  _{j} \}$ are
computable and it follows that the set $\{(i,j)\in
\mathbb{N}^{2}\mid \alpha _{i} =\alpha  _{j} \}$ is computable.
The set $\{i\in \mathbb{N}\mid \alpha _{i} \in \mathbb{Q}\}$ is
also computable and since $\lambda _{i} =\alpha _{\tau _{1} (i)}$
for each $i\in \mathbb{N}$ we conclude that $\mathcal{C}$ is a
c.e.\ set.

If $(i,j)\in \mathcal{C}$, then obviously $B_{i} \subseteq B_{j}
$.

Suppose now that $i,j\in \mathbb{N}$ and $x\in B_{i} \cap B_{j} $.
We claim that there exists $k\in \mathbb{N}$ such that $x\in B_{k}
$, $(k,i)\in \mathcal{C}$ and $(k,j)\in \mathcal{C}$. We have two
cases.
\begin{enumerate}
\item[Case 1]: $x\neq \lambda _{i} $ or $x\neq \lambda _{j} $.
Then $x\in \mathbb{Q}$ and $x\in I_{i} \cap I_{j} $. By
Proposition \ref{svojstva-diamond-F}(4) there exists $k\in
\mathbb{N}$ such that $x\in I_{k} $, $I_{k} \subseteq _{F}I_{i} $,
$I_{k} \subseteq _{F}I_{j} $ and $\lambda _{k} \in \mathbb{Q}$. It
follows $x\in B_{k} $, $(k,i)\in \mathcal{C}$ and $(k,j)\in
\mathcal{C}$.

\item[Case 2]: $x=\lambda _{i} = \lambda _{j} $. Choose a positive
rational number $r$ such that $r<\rho _{i} $ and $r<\rho _{j} $.
Let $k\in \mathbb{N}$ be such that $(x,r)=(\lambda _{k} ,\rho _{k}
)$. Then $I_{k} \subseteq _{F}I_{i} $ and $I_{k} \subseteq
_{F}I_{j} $ and we conclude that $x\in B_{k} $, $(k,i)\in
\mathcal{C}$ and $(k,j)\in \mathcal{C}$.
\end{enumerate}

In particular, we have the following conclusion: if $i,j\in
\mathbb{N}$ and $x\in B_{i} \cap B_{j} $, then there exists $k\in
\mathbb{N}$ such that $x\in B_{k} $, $B_{k} \subseteq B_{i} $ and
$B_{k} \subseteq B_{j} $. This, together with the obvious fact
that $X=\bigcup_{i\in \mathbb{N}}B_{i} $, implies that there
exists a (unique) topology $\mathcal{T}$ on $X$ such that $\{B_{i}
\mid i\in \mathbb{N}\}$ is a basis for $\mathcal{T}$.

Then the triple $(X,\mathcal{T},(I_{i} ))$ is a computable
topological space: its characteristic relations are $\mathcal{C}$
and $\mathcal{D}$.

We claim that the topological space $(X,\mathcal{T})$ is not
regular. We have that $\mathbb{Q}$ is the union of all $B_{i} $
such that $\lambda _{i} \in \mathbb{Q}$. Therefore $\mathbb{Q}\in
\mathcal{T}$ and therefore $c+ \mathbb{Q}$ is a closed set in
$(X,\mathcal{T})$. Clearly $0\notin c+ \mathbb{Q}$.

Suppose $(X,\mathcal{T})$ is regular. Then there exist disjoint
sets $U,V\in \mathcal{T}$ such that $0\in U$ and $c+ \mathbb{Q}
\subseteq V$. It follows that there exists $i\in \mathbb{N}$ such
that $0\in B_{i} \subseteq U$. Hence $0\in I_{i} \cap
\mathbb{Q}\subseteq U$. So there exists an open interval $K$ in
$\mathbb{R}$ such that
\begin{equation}\label{KQ-U}
K\cap \mathbb{Q}\subseteq U.
\end{equation}
 Choose $x\in \mathbb{Q}$ such that
$c+x\in K$. Since $c+x\in V$, there exists $j\in \mathbb{N}$ such
that $c+x\in B_{j} \subseteq V$ and we conclude that there exists
an open interval $L$ in $\mathbb{R}$ such that $c+x\in L$ and
$L\cap \mathbb{Q}\subseteq V$. This and (\ref{KQ-U}) imply $(K
\cap L)\cap \mathbb{Q}=\emptyset $. But this is impossible since
$c+x\in K\cap L$: if two open intervals have a common point, then
they have a common rational point.

So $(X,\mathcal{T})$ is not regular.
\end{example1}

Suppose $(X,\mathcal{T},(I_{i} ))$  is a computable topological
space and $\mathcal{C}$ and $\mathcal{D}$ are its  characteristic
relations such that, beside the properties (1)--(4) from the
definition of characteristic relations, the following additional
properties hold:
\begin{enumerate}
\item[(5)] if $i,j\in \mathbb{N}$ are such that $(i,j)\in
\mathcal{D}$, then $(j,i)\in \mathcal{D}$;

\item[(6)] $(i,i)\in \mathcal{C}$ for each $i\in \mathbb{N}$ and
if $i,j,k\in \mathbb{N}$ are such that $(i,j)\in \mathcal{C}$ and
$(j,k)\in \mathcal{C}$, then $(i,k)\in \mathcal{C}$;

\item[(7)] if $i,j,k\in \mathbb{N}$ are such that $(k,i)\in
\mathcal{C}$ and $(i,j)\in \mathcal{D} $, then $(k,j)\in
\mathcal{D} $.
\end{enumerate}
Then we say that $\mathcal{C}$ and $\mathcal{D}$ are
\textbf{proper characteristic relations} for
$(X,\mathcal{T},(I_{i} ))$.

Every computable topological space has proper characteristic
relations. This is the contents of the following proposition.
\begin{proposition}
Let $(X,\mathcal{T},(I_{i} ))$ be a computable topological space.
Then there exist proper characteristic relations for
$(X,\mathcal{T},(I_{i} ))$.
\end{proposition}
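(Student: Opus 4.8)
The plan is to start from arbitrary characteristic relations $\mathcal{C}_0$ and $\mathcal{D}_0$ for $(X,\mathcal{T},(I_i))$ (which exist by hypothesis) and to enlarge them step by step so that the extra closure properties (5), (6), (7) hold, while being careful that every enlargement preserves the soundness properties (1) and (2) and is itself c.e. Properties (3) and (4) only assert the \emph{existence} of suitable indices, so they are automatically preserved under passing to larger relations; the real content is to close $\mathcal{D}_0$ under the symmetry of formal disjointness, close $\mathcal{C}_0$ under reflexivity and transitivity, and then close the pair under the "absorption" rule of (7), all simultaneously, without ever putting a pair into $\mathcal{D}$ whose balls actually meet or a pair into $\mathcal{C}$ whose first ball is not contained in the second.

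The key steps, in order, are as follows. First I would define $\mathcal{C}$ to be the reflexive–transitive closure of $\mathcal{C}_0$, i.e.\ $(i,j)\in\mathcal{C}$ iff $i=j$ or there is a finite chain $i=k_0,k_1,\dots,k_m=j$ with each $(k_t,k_{t+1})\in\mathcal{C}_0$. Since $\mathcal{C}_0$ is c.e., this closure is c.e.\ (a finite search over chains, whose lengths are unbounded but which can be dovetailed), and by property (2) for $\mathcal{C}_0$ together with transitivity of $\subseteq$, every pair in $\mathcal{C}$ still satisfies $I_i\subseteq I_j$; property (6) holds by construction. Second, I would define $\mathcal{D}$ by closing $\mathcal{D}_0$ under the two rules simultaneously: $(i,j)\in\mathcal{D}$ iff $(i,j)\in\mathcal{D}_0$, or $(j,i)\in\mathcal{D}_0$, or there exist $i',j'$ with $(i,i')\in\mathcal{C}$, $(j,j')\in\mathcal{C}$ and $\{(i',j'),(j',i')\}\cap\mathcal{D}_0\neq\emptyset$ — in other words, take the symmetric hull of $\mathcal{D}_0$ and then saturate under (7) using the already-built $\mathcal{C}$. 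One checks that a single such saturation step already yields a relation closed under both (5) and (7): closure under (5) is built in by symmetrizing, and closure under (7) follows because $\mathcal{C}$ is transitive (if $(k,i)\in\mathcal{C}$ and $(i,j)\in\mathcal{D}$ arose from some $(i',j')\in\mathcal{D}_0$ with $(i,i'),(j,j')\in\mathcal{C}$, then $(k,i')\in\mathcal{C}$ by transitivity, so $(k,j)\in\mathcal{D}$). Soundness of property (1) for $\mathcal{D}$ follows from property (2) for $\mathcal{C}$: if $(i,i'),(j,j')\in\mathcal{C}$ and $I_{i'}\cap I_{j'}=\emptyset$, then $I_i\subseteq I_{i'}$ and $I_j\subseteq I_{j'}$ force $I_i\cap I_j=\emptyset$; the symmetric case $(j,i)\in\mathcal{D}_0$ is immediate from property (1) for $\mathcal{D}_0$. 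Finally, $\mathcal{D}$ is c.e.\ since it is defined by a positive (existential) formula over the c.e.\ sets $\mathcal{C}$ and $\mathcal{D}_0$, so one dovetails the searches for the witnessing $i',j'$.

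The main obstacle — really the only subtle point — is verifying that a \emph{one-pass} saturation of $\mathcal{D}_0$ against the \emph{already-closed} $\mathcal{C}$ is genuinely closed under (7), rather than requiring an iterated closure (which would threaten the c.e.\ bound if done naively). The way to handle this cleanly is to insist on doing the $\mathcal{C}$-closure \emph{first} and in full, so that when one later applies rule (7) the "$(k,i)\in\mathcal{C}$, $(i,j)\in\mathcal{D}$" pattern can be collapsed in one shot via transitivity of $\mathcal{C}$, as indicated above; symmetrizing $\mathcal{D}_0$ before this step (not after) is what makes (5) survive the (7)-saturation. Everything else is routine: properties (3) and (4) are inherited because $\mathcal{C}\supseteq\mathcal{C}_0$ and $\mathcal{D}\supseteq\mathcal{D}_0$ and these properties are monotone upward in the relations, and the c.e.-ness of both new relations reduces to the standard facts that transitive closures of c.e.\ relations are c.e.\ and that existential quantification over $\mathbb{N}$ preserves c.e.-ness.
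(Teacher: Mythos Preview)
Your proposal is correct and follows essentially the same approach as the paper: take the reflexive--transitive closure of $\mathcal{C}_0$, then symmetrize $\mathcal{D}_0$ and saturate it under rule (7) against the closed $\mathcal{C}$. The only cosmetic difference is that the paper carries this out in two explicit stages (first symmetrize $\mathcal{D}_0$ and close $\mathcal{C}_0$, verify (1)--(6); then, assuming (5) and (6), define the (7)-saturation $\mathcal{D}'=\{(i,j)\mid\exists k,l:(i,k),(j,l)\in\mathcal{C},\ (k,l)\in\mathcal{D}\}$), whereas you fold the symmetrization and the saturation into a single definition; since $\mathcal{C}$ is reflexive your three disjuncts collapse to the last one anyway, so the resulting relations coincide.
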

\begin{proof}
We first show that there exist characteristic relations for
$(X,\mathcal{T},(I_{i} ))$ which satisfy properties (5) and (6)
above.

Let $\mathcal{C}$ and $\mathcal{D}$ be characteristic relations
for $(X,\mathcal{T},(I_{i} ))$. We define
$$\mathcal{D}'=\mathcal{D}\cup \{(i,j)\mid (j,i)\in
\mathcal{D}\}$$ and we define $\mathcal{C}'$ as the set of all
$(i,j)\in \mathbb{N}^{2}$ for which there exist $n\in \mathbb{N}$
and $a_{0} ,\dots ,a_{n}\in \mathbb{N}$  such that $i=a_{0}$ ,
$j=a_{n}$  and $(a_{l} ,a_{l+1})\in \mathcal{C}$ for each $l<n$.
Clearly, $\mathcal{D}'$ is c.e. On the other hand, the set
$$\Omega =\{a\in \mathbb{N}\mid ((a)_{l},(a)_{l+1})\in
\mathcal{C}\mbox{ for each }l<\overline{a}\}$$ is c.e.\ (recall the notation from Subsection \ref{subs-fr}) by
Proposition \ref{cfv}(5) since $\Omega =\{a\in \mathbb{N}\mid \Phi
(a)\subseteq \mathcal{C} \}$, where $\Phi :\mathbb{N}\rightarrow
\mathcal{P}(\mathbb{N}^{2})$ is the c.f.v.\ function defined by
$\Phi (a)=\{((a)_{l} ,(a)_{l+1})\mid l<\overline{a}\}$
(Proposition \ref{cfv}(4)). We have
$$\mathcal{C}'=\{(i,j)\in \mathbb{N}^{2}\mid\mbox{there exists
}a\in \mathbb{N}\mbox{ such that }i=(a)_{0},
j=(a)_{\overline{a}}\mbox{ and }a\in \Omega \}$$ and therefore
$\mathcal{C}'$ is c.e.

If $i,j\in \mathbb{N}$ are such that $(i,j)\in \mathcal{D}'$, then
clearly $I_{i} \cap I_{j} =\emptyset $ and if $(i,j)\in
\mathcal{C}'$, then $I_{i} \subseteq I_{j} $. Since
$\mathcal{D}\subseteq \mathcal{D}'$ and $\mathcal{C}\subseteq
\mathcal{C}'$, properties (3) and (4) from the definition of
characteristic relations are also satisfied for $\mathcal{C}'$ and
$\mathcal{D}'$. Hence these are characteristic relations for
$(X,\mathcal{T},(I_{i} ))$. It is immediate from their definitions
that $\mathcal{D}'$ is symmetric and $\mathcal{C}'$ is reflexive
and transitive, so properties (5) and (6) above are satisfied.

Suppose now that we have characteristic relations $\mathcal{C}$
and $\mathcal{D}$ for $(X,\mathcal{T},(I_{i} ))$ which satisfy (5)
and (6). We define $$\mathcal{D}'=\{(i,j)\in \mathbb{N}^{2}\mid
\exists k,l\in \mathbb{N}\mbox{ such that }(i,k)\in \mathcal{C},
(j,l)\in \mathcal{C}\mbox{ and }(k,l)\in \mathcal{D}\}.$$ It is
easy to check that $\mathcal{C}$ and $\mathcal{D}'$ are proper
characteristic relations for $(X,\mathcal{T},(I_{i} ))$.
\end{proof}

\section{Effective separation of compact sets} \label{sect-escs}
In this section
let $(X,\mathcal{T},(I_{i} ))$ be some fixed computable
topological space and let $\mathcal{C}$ and $\mathcal{D}$ be its
proper characteristic relations.


\begin{lemma} \label{(4)-opcenito}
Suppose $n\in \mathbb{N}$, $i_{0} ,\dots ,i_{n} \in \mathbb{N}$
and $x\in I_{i_{0}}\cap \dots \cap I_{i_{n} }$. Then there exists
$k\in \mathbb{N}$ such that $x\in I_{k} $ and $(k,i_{0} ),\dots
,(k,i_{n} )\in \mathcal{C}$.
\end{lemma}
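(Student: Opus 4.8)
The plan is to prove the statement by induction on $n$, using only property (4) from the definition of characteristic relations together with the reflexivity and transitivity of $\mathcal{C}$ that are guaranteed because $\mathcal{C}$ and $\mathcal{D}$ are \emph{proper} characteristic relations (property (6)), which is in force throughout this section.

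For the base case $n=0$ I would simply take $k=i_{0}$: then $x\in I_{k}$, and $(k,i_{0})=(i_{0},i_{0})\in\mathcal{C}$ by reflexivity of $\mathcal{C}$. (Alternatively one can take $n=1$ as the base case, which is exactly property (4).)

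For the inductive step, assume the claim holds for some $n$ and let $i_{0},\dots,i_{n+1}\in\mathbb{N}$ with $x\in I_{i_{0}}\cap\dots\cap I_{i_{n+1}}$. Applying the inductive hypothesis to $i_{0},\dots,i_{n}$ and the point $x\in I_{i_{0}}\cap\dots\cap I_{i_{n}}$, I obtain $k'\in\mathbb{N}$ with $x\in I_{k'}$ and $(k',i_{l})\in\mathcal{C}$ for all $l\le n$. Since $x\in I_{k'}\cap I_{i_{n+1}}$, property (4) applied to $k'$ and $i_{n+1}$ yields $k\in\mathbb{N}$ with $x\in I_{k}$, $(k,k')\in\mathcal{C}$ and $(k,i_{n+1})\in\mathcal{C}$. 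By transitivity of $\mathcal{C}$, from $(k,k')\in\mathcal{C}$ and $(k',i_{l})\in\mathcal{C}$ I get $(k,i_{l})\in\mathcal{C}$ for each $l\le n$; together with $(k,i_{n+1})\in\mathcal{C}$ this shows $k$ has the required property, closing the induction.

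There is no real obstacle here; the only point worth noting is that property (4) by itself does not immediately bootstrap to an arbitrary finite intersection, and one genuinely needs the transitivity (and, in the base case, reflexivity) of $\mathcal{C}$ — hence the role of the standing assumption that $\mathcal{C}$ and $\mathcal{D}$ are proper characteristic relations.
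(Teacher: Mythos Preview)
Your proof is correct and follows exactly the approach sketched in the paper: induction on $n$, with reflexivity of $\mathcal{C}$ handling the base case and property (4) together with transitivity of $\mathcal{C}$ handling the inductive step.
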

\begin{proof}
Using reflexivity and transitivity of $\mathcal{C}$ and property
(4) from the definition of a computable topological space, this
follows easily by induction.
\end{proof}

Let $i,a\in \mathbb{N}$. We say that $I_{i} $ is
$\mathcal{C}$-\textbf{contained} in $J_{a} $ and write $I_{i}
\subseteq _{\mathcal{C}}J_{a} $ if there exists $j\in [a]$ such
that $(i,j)\in \mathcal{C}$. Obviously  $I_{i} \subseteq
_{\mathcal{C}}J_{a} $ implies $I_{i} \subseteq  J_{a}$.

Let $a,b\in \mathbb{N}$. We say that $J_{a}$ is
$\mathcal{C}$-\textbf{contained} in $J_{b}$ and write $J_{a}
\subseteq _{\mathcal{C}}J_{b}$ if $I_{i} \subseteq
_{\mathcal{C}}J_{b}$ for each $i\in [a]$. If $J_{a}\subseteq
_{\mathcal{C}}J_{b}$, then clearly $J_{a}\subseteq J_{b}$. Note
also that $J_{a}\subseteq  _{\mathcal{C}}J_{a}$ for each $a\in
\mathbb{N}$.

\begin{proposition} \label{K-Ja-Jb}
Suppose $K$ is a nonempty compact set in $(X,\mathcal{T})$ and
$a,b\in \mathbb{N}$ are such that $K\subseteq J_{a}\cap J_{b}$.
Then there exists $c\in \mathbb{N}$ such that $K\subseteq J_{c}$,
$J_{c}\subseteq _{\mathcal{C}}J_{a}$ and $J_{c}\subseteq
_{\mathcal{C}}J_{b}$.
\end{proposition}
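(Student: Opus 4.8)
The plan is to use compactness of $K$ to reduce to finitely many basic open sets and then to apply Lemma~\ref{(4)-opcenito} pointwise. First I would note that $K \subseteq J_a = \bigcup_{i \in [a]} I_i$ and $K \subseteq J_b = \bigcup_{j \in [b]} I_j$, so for each $x \in K$ there are indices $i(x) \in [a]$ and $j(x) \in [b]$ with $x \in I_{i(x)} \cap I_{j(x)}$. By Lemma~\ref{(4)-opcenito} (applied with the two indices $i(x), j(x)$), there exists $k(x) \in \mathbb{N}$ such that $x \in I_{k(x)}$ and $(k(x), i(x)), (k(x), j(x)) \in \mathcal{C}$. The family $\{ I_{k(x)} \mid x \in K \}$ is an open cover of $K$, so by compactness there are finitely many points $x_0, \dots, x_m \in K$ with $K \subseteq I_{k(x_0)} \cup \dots \cup I_{k(x_m)}$.

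Next I would choose $c \in \mathbb{N}$ with $[c] = \{ k(x_0), \dots, k(x_m) \}$; such a $c$ exists because $([j])_{j \in \mathbb{N}}$ enumerates all nonempty finite subsets of $\mathbb{N}$ (and $K \neq \emptyset$ guarantees the set is nonempty). Then $J_c = I_{k(x_0)} \cup \dots \cup I_{k(x_m)} \supseteq K$ by construction. To see $J_c \subseteq_{\mathcal{C}} J_a$, take any $i \in [c]$, say $i = k(x_t)$; then $(k(x_t), i(x_t)) \in \mathcal{C}$ with $i(x_t) \in [a]$, which is exactly the condition $I_{k(x_t)} \subseteq_{\mathcal{C}} J_a$. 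Since this holds for every element of $[c]$, we get $J_c \subseteq_{\mathcal{C}} J_a$, and symmetrically $J_c \subseteq_{\mathcal{C}} J_b$.

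There is no real obstacle here — this is a routine compactness argument combined with the pointwise statement of Lemma~\ref{(4)-opcenito}. The only points requiring a little care are: making sure the extracted finite subcover is nonempty (which follows since $K \neq \emptyset$) so that a valid index $c$ exists in our enumeration of nonempty finite sets; and being careful that the $\mathcal{C}$-containment relation $J_c \subseteq_{\mathcal{C}} J_a$ is precisely "for each $i \in [c]$ there is $j \in [a]$ with $(i,j) \in \mathcal{C}$", which is verified element by element as above. Transitivity or reflexivity of $\mathcal{C}$ is not even needed for this particular statement, only the conclusion of Lemma~\ref{(4)-opcenito}.
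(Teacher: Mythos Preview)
Your proof is correct and follows essentially the same route as the paper's own argument: pick for each $x\in K$ indices $i(x)\in[a]$, $j(x)\in[b]$ with $x\in I_{i(x)}\cap I_{j(x)}$, use property~(4) (equivalently, Lemma~\ref{(4)-opcenito} for two indices) to get $k(x)$ with $(k(x),i(x)),(k(x),j(x))\in\mathcal{C}$, extract a finite subcover by compactness, and set $[c]$ equal to the resulting finite set of $k$'s. Your remarks about nonemptiness of $K$ and the unwinding of the definition of $J_c\subseteq_{\mathcal{C}}J_a$ are spot on.
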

\begin{proof}
Let $x\in K$. Then there exists $i\in [a]$ and $j\in [b]$ such
that $x\in I_{i} \cap I_{j} $. By definition of computable
topological space, there exists $k\in \mathbb{N}$ such that $x\in
I_{k} $ and  $(k,i),(k,j)\in \mathcal{C}$.

So for each $x\in K$ there exists $k_{x}\in \mathbb{N}$ such that
$x\in I_{k_{x}}$, $I_{k_{x}}\subseteq_ \mathcal{C}J_{a}$ and
$I_{k_{x}}\subseteq _\mathcal{C}J_{b}$. Since $\{I_{k_{x}}\mid
x\in K\}$ is an open cover of $K$, there exists $n\in \mathbb{N}$
and $x_{0} ,\dots ,x_{n} \in K$ such that $$K\subseteq I_{k_{x_{0}
}}\cup \dots \cup I_{k_{x_{n} }}.$$ Choose $c\in \mathbb{N}$ such
that $[c]=\{k_{x_{0} },\dots ,k_{x_{n} }\}$. Then $K\subseteq
J_{c}$, $J_{c}\subseteq _{\mathcal{C}}J_{a}$ and $J_{c}\subseteq
_{\mathcal{C}}J_{b}$.
\end{proof}

If $a,b,c\in \mathbb{N}$ are such that $J_{a}\subseteq
_{\mathcal{C}}J_{b}$ and $J_{b}\subseteq _{\mathcal{C}}J_{c}$,
then the transitivity of $\mathcal{C}$ easily gives
$J_{a}\subseteq _{\mathcal{C}}J_{c}$. Using this, we get the
following consequence of Proposition \ref{K-Ja-Jb}.

\begin{corollary} \label{K-Ja-Jb-cor}
Let $K$ be a nonempty compact set in $(X,\mathcal{T})$, $n\in
\mathbb{N}$ and $a_{0} ,\dots ,a_{n} \in \mathbb{N}$ such that
$K\subseteq J_{a_{0} }\cap \dots \cap J_{a_{n} }$. Then there
exists $c\in \mathbb{N}$ such that $K\subseteq J_{c}$ and
$J_{c}\subseteq _{\mathcal{C}}J_{a_{0} }$, \dots , $J_{c}\subseteq
_{\mathcal{C}}J_{a_{n} }$.
\end{corollary}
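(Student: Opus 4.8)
The plan is to prove the statement by induction on $n$, with Proposition \ref{K-Ja-Jb} supplying both the base case and the engine of the inductive step. For $n=0$ there is nothing to prove: we take $c=a_{0}$ and recall that $J_{a_{0}}\subseteq _{\mathcal{C}}J_{a_{0}}$. (Note that for $n=1$ the claim is already exactly Proposition \ref{K-Ja-Jb}.)

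For the inductive step, suppose the statement holds for some $n\in \mathbb{N}$ and that $K\subseteq J_{a_{0}}\cap \dots \cap J_{a_{n+1}}$. First I would apply the inductive hypothesis to $a_{0},\dots ,a_{n}$ to obtain $c'\in \mathbb{N}$ with $K\subseteq J_{c'}$ and $J_{c'}\subseteq _{\mathcal{C}}J_{a_{0}}, \dots , J_{c'}\subseteq _{\mathcal{C}}J_{a_{n}}$. Then $K\subseteq J_{c'}\cap J_{a_{n+1}}$, so Proposition \ref{K-Ja-Jb} yields $c\in \mathbb{N}$ with $K\subseteq J_{c}$, $J_{c}\subseteq _{\mathcal{C}}J_{c'}$ and $J_{c}\subseteq _{\mathcal{C}}J_{a_{n+1}}$. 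Finally, using the transitivity of the relation $\subseteq _{\mathcal{C}}$ between rational open sets noted just before the corollary (which follows from transitivity of $\mathcal{C}$, i.e.\ property (6) of proper characteristic relations), from $J_{c}\subseteq _{\mathcal{C}}J_{c'}$ and $J_{c'}\subseteq _{\mathcal{C}}J_{a_{i}}$ we get $J_{c}\subseteq _{\mathcal{C}}J_{a_{i}}$ for each $i\in \{0,\dots ,n\}$; together with $J_{c}\subseteq _{\mathcal{C}}J_{a_{n+1}}$ this is precisely what the statement asks for, completing the induction.

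There is no genuine obstacle here: all the content sits in Proposition \ref{K-Ja-Jb}, and the corollary is a routine bootstrapping. The only point worth a moment's attention is ensuring that transitivity of $\subseteq _{\mathcal{C}}$ (for the $J$'s) is actually available, but this is exactly the remark preceding the corollary. One could alternatively avoid iterating Proposition \ref{K-Ja-Jb} and instead repeat its proof directly for finitely many indices via Lemma \ref{(4)-opcenito}, but the inductive argument above is shorter and reuses what has already been established.
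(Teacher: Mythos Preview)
Your proof is correct and follows exactly the approach the paper indicates: it derives the corollary from Proposition~\ref{K-Ja-Jb} by induction, using the transitivity of $\subseteq_{\mathcal{C}}$ noted just before the statement (and reflexivity of $\mathcal{C}$ for the base case). The paper does not spell out the induction, but your argument is precisely what is intended.
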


Let $i,a\in \mathbb{N}$. We say that $I_{i} $ and $J_{a} $ are
$\mathcal{D}$-\textbf{disjoint} and write $I_{i} \diamond
_{\mathcal{D}}J_{a} $ if $(i,j)\in \mathcal{D}$ for each $j\in
[a]$. Note that $I_{i} \diamond _{\mathcal{D}}J_{a} $ implies
$I_{i} \cap J_{a} =\emptyset $.

\begin{lemma} \label{x-notin-K-lem}
Suppose $K$ is a nonempty compact set in $(X,\mathcal{T})$ and
$x\in X\setminus K$. Then there exist $i,a\in \mathbb{N}$ such
that $x\in I_{i} $, $K\subseteq J_{a}$ and $I_{i}
\diamond_{\mathcal{D}}J_{a}$.
\end{lemma}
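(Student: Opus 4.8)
The plan is to separate the point $x$ from the compact set $K$ using the Hausdorff property encoded in characteristic relation $\mathcal{D}$, and then to compactify this separation by a finite-cover argument. First I would fix, for each $y\in K$, a pair of basic open sets witnessing $x\neq y$: by property (3) in the definition of a computable topological space, there exist $p_y,q_y\in\mathbb{N}$ with $x\in I_{p_y}$, $y\in I_{q_y}$ and $(p_y,q_y)\in\mathcal{D}$. This gives an open cover $\{I_{q_y}\mid y\in K\}$ of $K$, from which compactness extracts $y_0,\dots,y_n\in K$ with $K\subseteq I_{q_{y_0}}\cup\dots\cup I_{q_{y_n}}$. Choosing $a\in\mathbb{N}$ with $[a]=\{q_{y_0},\dots,q_{y_n}\}$ we get $K\subseteq J_a$, but the $p_{y_l}$'s are different for different $l$, so $x$ need not lie in a single $I_i$ that is $\mathcal{D}$-disjoint from all of $J_a$.

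To fix this I would merge the finitely many sets $I_{p_{y_0}},\dots,I_{p_{y_n}}$, all of which contain $x$, into one. Since $x\in I_{p_{y_0}}\cap\dots\cap I_{p_{y_n}}$, Lemma \ref{(4)-opcenito} provides $k\in\mathbb{N}$ with $x\in I_k$ and $(k,p_{y_l})\in\mathcal{C}$ for every $l\leq n$. Now property (7) of proper characteristic relations — namely that $(k,p_{y_l})\in\mathcal{C}$ and $(p_{y_l},q_{y_l})\in\mathcal{D}$ imply $(k,q_{y_l})\in\mathcal{D}$ — yields $(k,j)\in\mathcal{D}$ for each $j\in[a]$, which is exactly the definition of $I_k\diamond_{\mathcal{D}}J_a$. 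Taking $i=k$ then gives $x\in I_i$, $K\subseteq J_a$ and $I_i\diamond_{\mathcal{D}}J_a$, as required.

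The only mild subtlety, and the step I would be most careful about, is handling the case $K=\emptyset$ is excluded by hypothesis ($K$ is assumed nonempty), so the finite subcover argument is legitimate; and making sure that the application of property (7) is to the \emph{proper} characteristic relations that were fixed at the start of the section, which is why the hypothesis that $(X,\mathcal{T},(I_i))$ comes equipped with proper characteristic relations is essential here. There are no computations to grind through — the proof is purely a compactness-plus-relations argument — so I would keep it to a few lines structured exactly as above.
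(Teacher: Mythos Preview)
Your proposal is correct and follows essentially the same argument as the paper: pick for each $y\in K$ a pair $(p_y,q_y)\in\mathcal{D}$ separating $x$ from $y$, extract a finite subcover $\{I_{q_{y_0}},\dots,I_{q_{y_n}}\}$ of $K$, use Lemma~\ref{(4)-opcenito} to obtain a single $I_k$ containing $x$ with $(k,p_{y_l})\in\mathcal{C}$ for all $l$, and then apply property~(7) of proper characteristic relations to conclude $(k,q_{y_l})\in\mathcal{D}$ for all $l$. The paper's proof is line-for-line the same, up to naming the indices $i_y,j_y$ rather than $p_y,q_y$ and choosing $a$ after rather than before finding $k$.
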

\begin{proof}
Let $y\in K$. Since $x\neq y$, by definition of computable
topological space there exist $i_{y},j_{y}\in \mathbb{N}$ such
that $x\in I_{i_{y}}$, $y\in I_{j_{y}}$ and $(i_{y},j_{y})\in
\mathcal{D}$. We have that $\{I_{j_{y}}\mid y\in K\}$ is an open
cover of $K$ and therefore there exist $n\in \mathbb{N}$ and
$y_{0} ,\dots ,y_{n} \in K$ such that
\begin{equation}\label{x-notin-K}
K\subseteq I_{j_{y_{0} }}\cup \dots \cup I_{j_{y_{n} }}.
\end{equation}
 On the other hand, $x\in
I_{i_{y_{0} }}\cap  \dots \cap  I_{i_{y_{n} }}$ and by Lemma
\ref{(4)-opcenito} there exists $k\in \mathbb{N}$ such that $x\in
I_{k} $ and $(k,i_{y_{0}}),\dots ,(k,i_{y_{n}})\in \mathcal{C}$.
Since $(i_{y_{0}},j_{y_{0}}),\dots ,(i_{y_{n}},j_{y_{n}})\in
\mathcal{C}$, by property (7) from the definition of proper
characteristic relations we have
\begin{equation}\label{x-notin-K-1}
(k,j_{y_{0}}),\dots ,(k,j_{y_{n}})\in \mathcal{D}.
\end{equation}

Choose $a\in \mathbb{N}$ so that $[a]=\{j_{y_{0}},\dots
,j_{y_{n}}\}$. Then $K\subseteq J_{a}$ by (\ref{x-notin-K}) and
$I_{k} \diamond _{\mathcal{D}}J_{a}$ by (\ref{x-notin-K-1}). Since
$x\in I_{k} $, this proves the lemma.
\end{proof}

Let $a,b\in \mathbb{N}$. We say that $J_{a}$ and $J_{b}$ are
$\mathcal{D}$-\textbf{disjoint} and write
$J_{a}\diamond_{\mathcal{D}}J_{b}$ if $(i,j)\in \mathcal{D}$ for
all $i\in [a]$ and $j\in [b]$. Clearly,
$J_{a}\diamond_{\mathcal{D}}J_{b}$ if and only if $I_{i}
\diamond_{\mathcal{D}}J_{b}$ for each $i\in [a]$. Note that
$J_{a}\diamond_{\mathcal{D}}J_{b}$ implies $J_{a}\cap
J_{b}=\emptyset $.

The following Lemma is a consequence of property (7) from the
definition of proper characteristic relations.
\begin{lemma} \label{(7)-J-J}
Let $i,a,b,c,d\in \mathbb{N}$.
\begin{enumerate}
\item[(i)] If $I_{i} \diamond_{\mathcal{D}} J_{a}$ and
$J_{b}\subseteq _{\mathcal{C}}J_{a}$, then $I_{i}
\diamond_{\mathcal{D}} J_{b}$;

\item[(ii)] If $J_{c} \diamond_{\mathcal{D}} J_{a}$ and
$J_{b}\subseteq _{\mathcal{C}}J_{a}$, then $J_{c}
\diamond_{\mathcal{D}} J_{b}$.

\item[(iii)] If $J_{c} \diamond_{\mathcal{D}} J_{a}$,
$J_{b}\subseteq _{\mathcal{C}}J_{a}$ and  $J_{d}\subseteq
_{\mathcal{C}}J_{c}$ then $J_{d} \diamond_{\mathcal{D}} J_{b}$.
\end{enumerate}
\end{lemma}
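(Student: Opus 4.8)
The plan is to deduce all three parts from property (7) of proper characteristic relations together with the symmetry of $\mathcal{D}$ (property (5)), reducing (ii) and (iii) to (i) by unwinding the definitions of $\diamond_{\mathcal{D}}$ for rational open sets.

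First I would prove (i). Assume $I_{i} \diamond_{\mathcal{D}} J_{a}$ and $J_{b}\subseteq _{\mathcal{C}}J_{a}$; to show $I_{i} \diamond_{\mathcal{D}} J_{b}$ it suffices to check that $(i,m)\in \mathcal{D}$ for each $m\in [b]$. Fix such an $m$. Since $J_{b}\subseteq _{\mathcal{C}}J_{a}$ there is $j\in [a]$ with $(m,j)\in \mathcal{C}$, and since $I_{i} \diamond_{\mathcal{D}} J_{a}$ we have $(i,j)\in \mathcal{D}$, hence $(j,i)\in \mathcal{D}$ by property (5). Now $(m,j)\in \mathcal{C}$ and $(j,i)\in \mathcal{D}$, so property (7) gives $(m,i)\in \mathcal{D}$, and one more application of property (5) yields $(i,m)\in \mathcal{D}$, as required.

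For (ii), note that $J_{c} \diamond_{\mathcal{D}} J_{a}$ is by definition equivalent to $I_{i} \diamond_{\mathcal{D}} J_{a}$ for every $i\in [c]$. Applying (i) to each such $i$ gives $I_{i} \diamond_{\mathcal{D}} J_{b}$ for every $i\in [c]$, which is precisely $J_{c} \diamond_{\mathcal{D}} J_{b}$. Finally, for (iii) I would apply (ii) twice, using that $\diamond_{\mathcal{D}}$ is a symmetric relation between rational open sets — immediate from property (5), since $J_{a}\diamond_{\mathcal{D}}J_{b}$ asserts $(i,j)\in \mathcal{D}$ for all $i\in [a]$ and $j\in [b]$. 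From $J_{c} \diamond_{\mathcal{D}} J_{a}$ and $J_{b}\subseteq _{\mathcal{C}}J_{a}$, part (ii) gives $J_{c} \diamond_{\mathcal{D}} J_{b}$, hence $J_{b}\diamond_{\mathcal{D}}J_{c}$; then from $J_{b}\diamond_{\mathcal{D}}J_{c}$ and $J_{d}\subseteq _{\mathcal{C}}J_{c}$, part (ii) gives $J_{b}\diamond_{\mathcal{D}}J_{d}$, hence $J_{d} \diamond_{\mathcal{D}} J_{b}$.

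There is no real obstacle here: the entire argument is bookkeeping with the definitions, and the only point requiring a little care is tracking the order of the arguments in $\mathcal{C}$ and $\mathcal{D}$ in property (7) and inserting applications of symmetry (property (5)) where needed.
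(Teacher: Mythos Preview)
Your proof is correct and is exactly the kind of verification the paper has in mind: the paper does not spell out a proof but simply states that the lemma ``is a consequence of property (7) from the definition of proper characteristic relations,'' and your argument carries this out, with the expected auxiliary use of symmetry (property (5)).
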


\begin{lemma}\label{K-L-disj}
Let $K$ and $L$ be nonempty disjoint compact sets in
$(X,\mathcal{T})$. Then there exists $a,b\in \mathbb{N}$ such that
$K\subseteq J_{a}$, $L\subseteq J_{b}$ and $J_{a}\diamond
_{\mathcal{D}}J_{b}$.
\end{lemma}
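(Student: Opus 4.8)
The plan is to deduce this from the one-sided separation Lemma \ref{x-notin-K-lem} by a compactness argument, and then tidy up using Corollary \ref{K-Ja-Jb-cor} and Lemma \ref{(7)-J-J}.

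First I would fix, for each point $x\in L$, a separating pair coming from the other set: since $K\cap L=\emptyset$, every $x\in L$ lies in $X\setminus K$, so Lemma \ref{x-notin-K-lem} provides $i_{x},a_{x}\in\mathbb{N}$ with $x\in I_{i_{x}}$, $K\subseteq J_{a_{x}}$ and $I_{i_{x}}\diamond_{\mathcal{D}}J_{a_{x}}$. The family $\{I_{i_{x}}\mid x\in L\}$ is an open cover of the compact set $L$, so there are $x_{0},\dots,x_{n}\in L$ with $L\subseteq I_{i_{x_{0}}}\cup\dots\cup I_{i_{x_{n}}}$. Choosing $b\in\mathbb{N}$ with $[b]=\{i_{x_{0}},\dots,i_{x_{n}}\}$ we get $L\subseteq J_{b}$.

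Next I would produce the basic set around $K$. We have $K\subseteq J_{a_{x_{0}}}\cap\dots\cap J_{a_{x_{n}}}$, so Corollary \ref{K-Ja-Jb-cor} gives $c\in\mathbb{N}$ with $K\subseteq J_{c}$ and $J_{c}\subseteq_{\mathcal{C}}J_{a_{x_{m}}}$ for every $m\le n$. For each $m$ we now have $I_{i_{x_{m}}}\diamond_{\mathcal{D}}J_{a_{x_{m}}}$ together with $J_{c}\subseteq_{\mathcal{C}}J_{a_{x_{m}}}$, so Lemma \ref{(7)-J-J}(i) yields $I_{i_{x_{m}}}\diamond_{\mathcal{D}}J_{c}$, i.e.\ $(i_{x_{m}},j)\in\mathcal{D}$ for all $j\in[c]$. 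Since $\mathcal{D}$ is symmetric (property (5) of proper characteristic relations), $(j,i_{x_{m}})\in\mathcal{D}$ for all $j\in[c]$ and all $m\le n$; as $[b]=\{i_{x_{0}},\dots,i_{x_{n}}\}$, this says precisely that $(j,i)\in\mathcal{D}$ for all $j\in[c]$ and $i\in[b]$, that is, $J_{c}\diamond_{\mathcal{D}}J_{b}$. Taking $a:=c$ completes the proof.

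The argument is essentially bookkeeping once Lemma \ref{x-notin-K-lem} is in hand, so I do not expect a genuine obstacle; the only point that needs a moment of care is the orientation of the relation $\diamond_{\mathcal{D}}$, which is why the symmetry of $\mathcal{D}$ is invoked at the last step.
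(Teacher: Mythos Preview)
Your proof is correct and follows essentially the same approach as the paper's: cover one of the sets by finitely many $I_{i_x}$ coming from Lemma~\ref{x-notin-K-lem}, then use Corollary~\ref{K-Ja-Jb-cor} and Lemma~\ref{(7)-J-J}(i) to produce a single $J$ around the other set that is $\mathcal{D}$-disjoint from all of them. The only cosmetic difference is that you swap the roles of $K$ and $L$ relative to the paper, which forces you to invoke the symmetry of $\mathcal{D}$ at the end; the paper covers $K$ (rather than $L$) by the $I_{i_x}$'s and thereby obtains $J_a\diamond_{\mathcal{D}}J_b$ directly without that extra step.
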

\begin{proof}
Let $x\in K$. By Lemma \ref{x-notin-K-lem} there exist
$i_{x},c_{x}\in \mathbb{N}$ such that $x\in I_{i_{x}}$,
$L\subseteq J_{c_{x}}$ and
$I_{i_{x}}\diamond_{\mathcal{D}}J_{c_{x}}$. Compactness of $K$
implies that there exist $x_{0} ,\dots ,x_{n} \in K$ such that
$$K\subseteq I_{i_{x_{0} }}\cup \dots \cup I_{i_{x_{n} }}.$$ We have
$L\subseteq J_{c_{x_{0} }}\cap \dots \cap J_{c_{x_{n} }}$ and by
Corollary \ref{K-Ja-Jb-cor} there exists $b\in \mathbb{N}$ such
that $L\subseteq J_{b}$ and $J_{b}\subseteq _{\mathcal{C}}
J_{c_{x_{0} }}$, \dots , $J_{b}\subseteq _{\mathcal{C}}
J_{c_{x_{n} }}$. We have $I_{i_{x_{0}
}}\diamond_{\mathcal{D}}J_{c_{x_{0} }}$, \dots , $I_{i_{x_{n}
}}\diamond_{\mathcal{D}}J_{c_{x_{n} }}$ and Lemma \ref{(7)-J-J}(i)
implies that $$I_{i_{x_{0} }}\diamond_{\mathcal{D}}J_{b},~ \dots
,~ I_{i_{x_{n} }}\diamond_{\mathcal{D}}J_{b}.$$ If we choose $a\in
\mathbb{N}$ such that $[a]=\{i_{x_{0} },\dots ,i_{x_{n} }\}$, then
we have $K\subseteq J_{a}$, $L\subseteq J_{b}$ and
$J_{a}\diamond_{\mathcal{D}}J_{b}$.
\end{proof}

\begin{theorem} \label{separacija-komp}
Let $\mathcal{F}$ be a finite family of nonempty compact sets in
$(X,\mathcal{T})$. Let $A$ be a finite subset of $\mathbb{N}$.
Then for each $K\in \mathcal{F}$ we can select $i_{K}\in
\mathbb{N}$ so that the following hold:
\begin{enumerate}
\item[(i)] $K\subseteq J_{i_{K}}$ for each $K\in \mathcal{F}$;

\item[(ii)] if $K,L\in \mathcal{F}$ are such that $K\cap
L=\emptyset $, then $J_{i_{K}}\diamond_{\mathcal{D}}J_{i_{L}}$;

\item[(iii)] if $K\in \mathcal{F}$ and $a\in A$ are such that
$K\subseteq J_{a}$, then $J_{i_{K}}\subseteq _{\mathcal{C}}J_{a}$.
\end{enumerate}
\end{theorem}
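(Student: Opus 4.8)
The plan is to prove Theorem \ref{separacija-komp} by a two-stage argument: first separate each compact set from every other compact set disjoint from it (using Lemma \ref{K-L-disj} repeatedly), then refine the resulting covers to simultaneously get $\mathcal{C}$-containment in the relevant members of $A$, and finally use Proposition \ref{K-Ja-Jb} / Corollary \ref{K-Ja-Jb-cor} together with Lemma \ref{(7)-J-J} to intersect all these finitely many requirements into a single index $i_K$ for each $K$.

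More concretely, first I would note that the family $\mathcal{F}$ is finite, say $\mathcal{F}=\{K_1,\dots,K_m\}$, and $A=\{a_1,\dots,a_p\}$ is finite. For each ordered pair $(K,L)$ with $K,L\in\mathcal{F}$ and $K\cap L=\emptyset$, apply Lemma \ref{K-L-disj} to obtain indices $u_{K,L}$ and $u_{L,K}$ with $K\subseteq J_{u_{K,L}}$, $L\subseteq J_{u_{L,K}}$ and $J_{u_{K,L}}\diamond_{\mathcal{D}}J_{u_{L,K}}$. So for each fixed $K$ we collect finitely many ``witness'' open sets $J_{u_{K,L}}$ (one for each $L$ disjoint from $K$), each containing $K$. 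We also collect, for each $a\in A$ with $K\subseteq J_a$, the set $J_a$ itself. Thus $K$ is contained in the intersection of finitely many rational open sets: the $J_{u_{K,L}}$'s and the relevant $J_a$'s. By Corollary \ref{K-Ja-Jb-cor} there is a single index $i_K$ with $K\subseteq J_{i_K}$ and $J_{i_K}\subseteq_{\mathcal{C}}J_{u_{K,L}}$ for every $L$ disjoint from $K$ and $J_{i_K}\subseteq_{\mathcal{C}}J_a$ for every $a\in A$ with $K\subseteq J_a$.

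It remains to check that these $i_K$ satisfy (i)--(iii). Item (i) is immediate, and (iii) holds by construction since we explicitly required $J_{i_K}\subseteq_{\mathcal{C}}J_a$ whenever $K\subseteq J_a$, $a\in A$. For (ii), suppose $K,L\in\mathcal{F}$ with $K\cap L=\emptyset$. We have $J_{u_{K,L}}\diamond_{\mathcal{D}}J_{u_{L,K}}$, and $J_{i_K}\subseteq_{\mathcal{C}}J_{u_{K,L}}$, $J_{i_L}\subseteq_{\mathcal{C}}J_{u_{L,K}}$; then Lemma \ref{(7)-J-J}(iii) gives $J_{i_K}\diamond_{\mathcal{D}}J_{i_L}$, which is exactly what (ii) requires.

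The main obstacle, such as it is, lies in making sure that \emph{all} the separation and containment constraints for a given $K$ can be satisfied by one index simultaneously — this is precisely the role of Corollary \ref{K-Ja-Jb-cor}, which lets us shrink to a common $\mathcal{C}$-refinement of finitely many covers at once — and in checking that the $\mathcal{D}$-disjointness survives passing to refinements on \emph{both} sides at the same time, which is handled by the three-variable form Lemma \ref{(7)-J-J}(iii). One small point to be careful about: the witnesses $u_{K,L}$ and $u_{L,K}$ produced by Lemma \ref{K-L-disj} for the pair $\{K,L\}$ need not be reused symmetrically, so it is cleanest to run Lemma \ref{K-L-disj} once per \emph{unordered} disjoint pair and keep both output indices, assigning $u_{K,L}$ to the witness containing $K$ and $u_{L,K}$ to the one containing $L$; everything else is a routine finite bookkeeping over the pairs in $\mathcal{F}$ and the elements of $A$.
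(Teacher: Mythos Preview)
Your proposal is correct and follows essentially the same approach as the paper: apply Lemma~\ref{K-L-disj} to each disjoint pair, then use Corollary~\ref{K-Ja-Jb-cor} to refine, for each $K$, to a single index $i_K$ that is $\mathcal{C}$-contained in all the separation witnesses and all relevant $J_a$'s, and finally invoke Lemma~\ref{(7)-J-J}(iii) to conclude $\mathcal{D}$-disjointness of the refined covers. The only cosmetic difference is that the paper introduces witnesses $u_{(K,L)},v_{(K,L)}$ for \emph{all} pairs (relying on the trivial fact that every compact set sits in some $J_j$ when $K\cap L\neq\emptyset$) and then refines $J_{i_K}$ below both $J_{u_{(K,L)}}$ and $J_{v_{(L,K)}}$, whereas you only track the disjoint pairs; your bookkeeping is slightly leaner but the argument is the same.
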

\begin{proof} Let us first notice that each compact set in
$(X,\mathcal{T})$ is contained in some $J_{j} $.

Let $K,L\in \mathcal{F}$. By Lemma  \ref{K-L-disj} there exist
$u_{(K,L)}, v_{(K,L)}\in \mathbb{N}$ such that $K\subseteq
J_{u_{(K,L)}}$, $L\subseteq J_{v_{(K,L)}}$ and such that
\begin{equation}\label{separacija-komp-1}
J_{u_{(K,L)}}\diamond_{\mathcal{D}}J_{v_{(K,L)}}\mbox{ if }K\cap
L=\emptyset .
\end{equation}

Let $K\in \mathcal{F}$. Observe the numbers $u_{(K,L)}$ and
$v_{(L,K)}$, where $L\in \mathcal{F}$, and the numbers $a\in A$
such that $K\subseteq J_{a}$. There are only finitely many such
numbers and so by Corollary \ref{K-Ja-Jb-cor} there exists
$i_{K}\in \mathbb{N}$ such that $K\subseteq J_{i_{K}}$ and
$J_{i_{K}}\subseteq _{\mathcal{C}}J_{u_{(K,L)}}$ for each $L\in
\mathcal{F}$, $J_{i_{K}}\subseteq _{\mathcal{C}}J_{v_{(L,K)}}$ for
each $L\in \mathcal{F}$ and $J_{i_{K}}\subseteq
_{\mathcal{C}}J_{a}$ for each $a\in A$ such that $K\subseteq
J_{a}$.

Then the numbers $i_{K}$, $K\in \mathcal{F}$, are the required
numbers. Properties (i) and (iii) clearly hold and if $K,L\in
\mathcal{F}$ are such that $K\cap L=\emptyset $, then from
$J_{i_{K}}\subseteq _{\mathcal{C}}J_{u_{(K,L)}}$,
$J_{i_{L}}\subseteq _{\mathcal{C}}J_{v_{(K,L)}}$ and
(\ref{separacija-komp-1}) we get
$J_{i_{K}}\diamond_{\mathcal{D}}J_{i_{L}}$ (Lemma
\ref{(7)-J-J}(iii)).
\end{proof}

\begin{proposition}\label{c.e.-CD}
Let $$\Omega _{1} =\{(i,a)\in \mathbb{N}^{2}\mid I_{i} \subseteq
_{\mathcal{C}}J_{a}\},~~\Omega _{2} =\{(a,b)\in \mathbb{N}^{2}\mid
J_{a} \subseteq _{\mathcal{C}}J_{b}\},$$ $$\Gamma _{1} =\{(i,a)\in
\mathbb{N}^{2}\mid I_{a} \diamond _{\mathcal{D}}J_{a}\},~~\Gamma
_{2} =\{(a,b)\in \mathbb{N}^{2}\mid J_{a} \diamond
_{\mathcal{D}}J_{b}\}.$$ Then $\Omega _{1} $, $\Omega _{2} $,
$\Gamma _{1}$ and $\Gamma _{2} $ are c.e.\ sets.
\end{proposition}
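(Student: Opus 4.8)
The plan is to reduce all four claims to Proposition \ref{cfv}(5) together with the facts that $\mathcal{C}$ and $\mathcal{D}$ are c.e.\ and that $j\mapsto[j]$ is c.f.v.; the only exception is $\Omega_1$, whose definition hides an existential quantifier and so must instead be handled by closure of c.e.\ sets under projection.

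I would start with $\Omega_1$. By definition $(i,a)\in\Omega_1$ iff there is $j\in[a]$ with $(i,j)\in\mathcal{C}$. The set $\{(i,a,j)\in\mathbb{N}^3\mid j\in[a]\}$ is computable (it is $\{(a,j)\mid j\in[a]\}$ with a dummy coordinate inserted, and $\{(a,j)\mid j\in[a]\}$ is computable since $j\mapsto[j]$ is c.f.v.), while $\{(i,a,j)\in\mathbb{N}^3\mid(i,j)\in\mathcal{C}\}$ is c.e.\ (it is the c.e.\ set $\mathcal{C}$ with a dummy middle coordinate inserted). Hence the intersection $\{(i,a,j)\mid j\in[a]\text{ and }(i,j)\in\mathcal{C}\}$ is c.e., and $\Omega_1$ is its projection onto the first two coordinates, so $\Omega_1$ is c.e.

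For $\Gamma_1$ and $\Gamma_2$ I would exhibit c.f.v.\ functions whose values are precisely the pairs forced to lie in $\mathcal{D}$. Observe that $(i,a)\in\Gamma_1$ iff $\{i\}\times[a]\subseteq\mathcal{D}$; the map $\mathbb{N}^2\to\mathcal{P}(\mathbb{N}^2)$, $(i,a)\mapsto\{i\}\times[a]$, is c.f.v.\ by Proposition \ref{cfv}(2), being the product of the (plainly c.f.v.) maps $(i,a)\mapsto\{i\}$ and $(i,a)\mapsto[a]$, so Proposition \ref{cfv}(5) with $T=\mathcal{D}$ gives that $\Gamma_1$ is c.e. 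Likewise $(a,b)\in\Gamma_2$ iff $[a]\times[b]\subseteq\mathcal{D}$, the map $(a,b)\mapsto[a]\times[b]$ is c.f.v.\ by Proposition \ref{cfv}(2), and Proposition \ref{cfv}(5) shows $\Gamma_2$ is c.e.

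Finally, for $\Omega_2$ I would feed the already-established c.e.\ set $\Omega_1$ into Proposition \ref{cfv}(5). Indeed $(a,b)\in\Omega_2$ iff $I_i\subseteq_{\mathcal{C}}J_b$ for every $i\in[a]$, i.e.\ iff $[a]\times\{b\}\subseteq\Omega_1$; since $(a,b)\mapsto[a]\times\{b\}$ is c.f.v., Proposition \ref{cfv}(5) with $T=\Omega_1$ shows $\Omega_2$ is c.e. There is no real obstacle here; the only point to watch is that $\Omega_2$ amounts to a bounded universal quantification over the merely c.e.\ set $\Omega_1$, which still yields a c.e.\ set exactly because the range $[a]$ of the quantifier is supplied by a c.f.v.\ function — this is precisely the content of Proposition \ref{cfv}(5) — whereas $\Omega_1$ itself, by contrast, cannot be obtained from Proposition \ref{cfv}(5) directly and has to be handled by projection as above.
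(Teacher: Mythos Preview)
Your proof is correct and follows essentially the same route as the paper: $\Omega_1$ is handled by projection (existential quantifier over the computable set $\{(j,a)\mid j\in[a]\}$ intersected with $\mathcal{C}$), while $\Omega_2$, $\Gamma_1$, $\Gamma_2$ are each obtained from Proposition~\ref{cfv}(5) via the c.f.v.\ functions $(a,b)\mapsto[a]\times\{b\}$, $(i,a)\mapsto\{i\}\times[a]$, $(a,b)\mapsto[a]\times[b]$ respectively. The paper in fact only spells out $\Omega_1$ and $\Omega_2$ and dispatches $\Gamma_1,\Gamma_2$ with ``in a similar way'', so your version is slightly more detailed but not different in substance.
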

\begin{proof}
Let $i,a\in \mathbb{N}$. We have
\begin{equation*}\label{c.e.-CD-1}
(i,a)\in \Omega _{1} \Leftrightarrow \mbox{ there exists } j\in
\mathbb{N} \mbox{ such that }(i,j)\in \mathcal{C}\mbox{ and }j\in
[a].
\end{equation*}
 The set $\{(j,a)\in
\mathbb{N}^{2}\mid j\in [a]\}$ is computable and so $\Omega _{1} $ is c.e.

The function $\Phi :\mathbb{N}^{2}\rightarrow
\mathcal{P}(\mathbb{N}^{2})$ defined by $\Phi (a,b)=[a]\times
\{b\}$ is c.f.v.\ by Proposition \ref{cfv}(2). For all $a,b\in
\mathbb{N}$ we have $$(a,b)\in \Omega _{2} \Leftrightarrow I_{i}
\subseteq _{\mathcal{C}}J_{b}\mbox{ for each }i\in
[a]\Leftrightarrow \Phi (a,b)\subseteq \Omega _{1} $$ and it
follows from Proposition \ref{cfv}(5) that $\Omega _{2} $ is c.e.

In a similar way we get that $\Gamma _{1} $ and $\Gamma _{2} $ are
c.e.
\end{proof}

\section{Local computable enumerability} \label{sect-lce}

Let $(X,\mathcal{T},(I_i))$ be a computable topological space and
let $A$ and $S$ be subsets of $X$ such that $A \subseteq S$. We
say that $A$ is \textbf{computably enumerable up to} $S$ in
$(X,\mathcal{T},(I_i))$ if there exists a c.e.\ subset $\Omega$ of
$\mathbb{N}$ such that for each $i\in \mathbb{N}$ the following
implications hold:
$$ I_i \cap A \neq
\emptyset\implies  i\in \Omega $$ $$i\in \Omega \implies I_i \cap
S \neq \emptyset.$$

Note the following: if $S$ is a closed set in $(X,\mathcal{T})$,
then $S$ is c.e.\ in $(X,\mathcal{T},(I_{i} ))$ if and only if $S$
is c.e.\ up to $S$ in $(X,\mathcal{T},(I_{i} ))$.

\begin{proposition} \label{unija-ce-upto}
Let $(X,\mathcal{T},(I_i))$ be a computable topological space and
let $A_{0} ,\dots ,A_{n} $, $S_{0} ,\dots ,S_{n} $ be subsets of
$X$ such that $A_{i} $ is c.e.\ up to $S_{i} $ for each $i\in
\{0,\dots ,n\}$. Then $A_{0} \cup \dots \cup A_{n} $ is c.e.\ up
to $S_{0} \cup \dots \cup S_{n} $. In particular, if $A_{0} ,\dots
A_{n} $ are c.e.\ up to a set $S$, then $A_{0} \cup \dots \cup
A_{n} $ is c.e.\ up to $S$.
\end{proposition}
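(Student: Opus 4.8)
The plan is to prove this directly from the definition of ``computably enumerable up to'', reducing the general case of $n+1$ sets to the union of $n+1$ corresponding computably enumerable subsets of $\mathbb{N}$, and then invoking the fact that a finite union of c.e.\ sets is c.e. First I would, for each $i\in\{0,\dots,n\}$, apply the hypothesis that $A_i$ is c.e.\ up to $S_i$ to obtain a c.e.\ set $\Omega_i\subseteq\mathbb{N}$ such that $I_j\cap A_i\neq\emptyset\implies j\in\Omega_i$ and $j\in\Omega_i\implies I_j\cap S_i\neq\emptyset$, for every $j\in\mathbb{N}$. Then I would set $\Omega=\Omega_0\cup\dots\cup\Omega_n$; since a finite union of c.e.\ subsets of $\mathbb{N}$ is c.e., $\Omega$ is c.e.

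Next I would verify the two required implications for $\Omega$ with respect to $A_0\cup\dots\cup A_n$ and $S_0\cup\dots\cup S_n$. For the first: if $I_j\cap(A_0\cup\dots\cup A_n)\neq\emptyset$, then $I_j\cap A_i\neq\emptyset$ for some $i$, hence $j\in\Omega_i\subseteq\Omega$. For the second: if $j\in\Omega$, then $j\in\Omega_i$ for some $i$, hence $I_j\cap S_i\neq\emptyset$, and since $S_i\subseteq S_0\cup\dots\cup S_n$ we get $I_j\cap(S_0\cup\dots\cup S_n)\neq\emptyset$. This establishes that $A_0\cup\dots\cup A_n$ is c.e.\ up to $S_0\cup\dots\cup S_n$.

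Finally, the ``in particular'' clause is the special case $S_0=\dots=S_n=S$: if each $A_i$ is c.e.\ up to $S$, then by what we just proved $A_0\cup\dots\cup A_n$ is c.e.\ up to $S\cup\dots\cup S=S$. I do not anticipate any real obstacle here; the only point requiring (routine) care is the closure of c.e.\ sets under finite union, which is a standard fact about computably enumerable sets that can be taken for granted. The proof is essentially bookkeeping with the definition, and I would keep it to a few sentences.
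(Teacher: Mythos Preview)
Your proposal is correct and follows essentially the same approach as the paper: choose c.e.\ witnesses $\Omega_0,\dots,\Omega_n$ for each pair $(A_i,S_i)$, take their union $\Omega$, and verify the two defining implications for $A_0\cup\dots\cup A_n$ and $S_0\cup\dots\cup S_n$. The paper's proof is slightly more terse, but the argument is identical.
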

\begin{proof}
Let $\Omega _{0} ,\dots ,\Omega _{n} $ be c.e.\ subsets of
$\mathbb{N}$ such that for each $j\in \{0,\dots ,n\}$ and each
$i\in \mathbb{N}$ the following implications hold: $$(I_i \cap
A_{j}  \neq \emptyset\implies  i\in \Omega_{j}) \mbox{ and }(i\in
\Omega_{j}  \implies I_i \cap S_{j}  \neq \emptyset).$$ Then for
each $i\in \mathbb{N}$ we have
$$I_i \cap
(A_{0} \cup \dots \cup A_{n} )  \neq \emptyset\implies  i\in
\Omega _{0} \cup \dots \cup \Omega _{n} $$ and $$i\in \Omega _{0}
\cup \dots \cup \Omega _{n}  \implies I_i \cap (S_{0} \cup \dots
\cup S_{n} )\neq \emptyset.$$ The set $\Omega _{0} \cup \dots \cup
\Omega _{n} $ is c.e.\ and the claim follows.
\end{proof}

Let $(X,\mathcal{T}, (I_i))$ be a computable topological space and
 $S \subseteq X$.  Let $x \in S$.
 We say that  $S$ is \textbf{computably enumerable at}
 $x$ in $(X,\mathcal{T}, (I_i))$
 if there exists a neighborhood $N$ of $x$ in $S$
 such that $N$ is c.e.\ up to $S$. We say that $S$ is
 \textbf{locally computably enumerable} in $(X,\mathcal{T}, (I_i))$
 if $S$ is c.e.\ at $x$ for each $x\in S$.

 Each c.e.\ set in $(X,\mathcal{T},(I_{i} ))$ is clearly locally
 c.e.

\begin{proposition} \label{comp-loc-ce-comp}
Let $(X,\mathcal{T}, (I_i))$ be a computable topological space and
let $S$ be a locally c.e.\ set in $(X,\mathcal{T}, (I_i))$ such
that $S$ is compact in $(X,\mathcal{T})$. Then $S$ is c.e.\ in
$(X,\mathcal{T}, (I_i))$.
\end{proposition}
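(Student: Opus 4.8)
The plan is to use compactness to reduce the local hypothesis to a finite one, and then apply Proposition \ref{unija-ce-upto}. First, since $S$ is locally c.e., for each $x \in S$ there is a neighborhood $N_x$ of $x$ in $S$ such that $N_x$ is c.e.\ up to $S$. By definition of the subspace topology, for each such $x$ we may pick $i_x \in \mathbb{N}$ with $x \in I_{i_x}$ and $I_{i_x} \cap S \subseteq N_x$; then $I_{i_x} \cap S$, being a subset of $N_x$ contained in $S$, is itself c.e.\ up to $S$ (a subset $A' \subseteq A$ with $A' \subseteq S$ inherits the property ``c.e.\ up to $S$'' from $A$, since the witnessing $\Omega$ for $A$ also works for $A'$ — wait, that direction is wrong, so instead observe: $I_{i_x}\cap S$ is a neighborhood of $x$ in $S$ contained in $N_x$; a neighborhood of $x$ contained in a set that is c.e.\ up to $S$ need not itself be c.e.\ up to $S$, so I should argue differently). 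The clean way: $N_x$ is c.e.\ up to $S$ with witness $\Omega_x$; I keep $N_x$ itself and do not shrink it. The family $\{ I_{i_x} \mid x \in S \}$, where $I_{i_x}$ is any basic open set with $x \in I_{i_x}$ and $I_{i_x} \cap S \subseteq N_x$, is an open cover of the compact set $S$, so finitely many of them, say $I_{i_{x_0}}, \dots, I_{i_{x_n}}$, already cover $S$.

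Next, set $N_j = I_{i_{x_j}} \cap S$ for $j = 0, \dots, n$. Each $N_j$ is a subset of $N_{x_j}$, which is c.e.\ up to $S$; but I need $N_j$ itself c.e.\ up to $S$. This is where I use that $N_j$ is \emph{open in $S$}: if $N_{x_j}$ is c.e.\ up to $S$ with witness $\Omega_{x_j}$, and $U$ is an open subset of $X$ with $U \cap S \subseteq N_{x_j}$, then $U \cap S$ is c.e.\ up to $S$ with witness $\{ i \mid I_i \subseteq_{\mathcal C} J_{a} \text{ for some } a \text{ with } I_a \dots\}$ — more simply, $\Omega'_j = \{ i \in \Omega_{x_j} \mid I_i \cap U \neq \emptyset \text{ is forced}\}$ does not quite work either. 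The correct observation is: since $N_j = I_{i_{x_j}} \cap S$, for any basic open $I_i$ we have $I_i \cap N_j \neq \emptyset$ iff $I_i \cap I_{i_{x_j}} \cap S \neq \emptyset$; and $I_i \cap I_{i_{x_j}}$ is open, covered by basic sets $I_k$ with $(k,i),(k,i_{x_j}) \in \mathcal C$ (property (4) / Lemma \ref{(4)-opcenito}), so $I_i \cap N_j \neq \emptyset$ iff some such $I_k$ meets $S$, i.e.\ meets $N_{x_j}$ (as $I_k \subseteq I_{i_{x_j}}$ and $I_k \cap S \subseteq N_{x_j}$). Thus defining
$$
\Omega'_j = \{ i \in \mathbb{N} \mid \exists k \ ( (k,i) \in \mathcal{C} \ \wedge\ (k, i_{x_j}) \in \mathcal{C} \ \wedge\ k \in \Omega_{x_j} ) \},
$$
the set $\Omega'_j$ is c.e.\ (as $\mathcal C$ and $\Omega_{x_j}$ are c.e.), and one checks the two defining implications: if $I_i \cap N_j \neq \emptyset$ then picking $x \in I_i \cap I_{i_{x_j}} \cap S$ and $k$ with $x \in I_k$, $(k,i),(k,i_{x_j}) \in \mathcal C$ gives $I_k \cap S \neq \emptyset$ hence (since $I_k \cap S \subseteq N_{x_j}$) $I_k \cap N_{x_j} \neq \emptyset$, so $k \in \Omega_{x_j}$ and $i \in \Omega'_j$; conversely if $i \in \Omega'_j$, the witnessing $k$ has $I_k \cap S \neq \emptyset$ (from $k \in \Omega_{x_j}$ and the second implication for $N_{x_j}$), wait it gives $I_k \cap N_{x_j} \neq \emptyset$ hence $I_k \cap S \neq \emptyset$, and since $I_k \subseteq I_{i_{x_j}}$ and $I_k \subseteq I_i$ (from $(k,i_{x_j}),(k,i) \in \mathcal C$), we get $\emptyset \neq I_k \cap S \subseteq I_i \cap I_{i_{x_j}} \cap S \subseteq I_i \cap N_j$. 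So each $N_j$ is c.e.\ up to $S$.

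Finally, $S = N_0 \cup \dots \cup N_n$ since the $I_{i_{x_j}}$ cover $S$, and each $N_j$ is c.e.\ up to $S$, so by Proposition \ref{unija-ce-upto} the set $S$ is c.e.\ up to $S$; since $S$ is closed (indeed compact in a Hausdorff space), being c.e.\ up to $S$ is the same as being c.e.\ in $(X,\mathcal{T},(I_i))$, as noted after the definition. The main obstacle is exactly the bookkeeping in the middle step: turning ``$N_x$ c.e.\ up to $S$'' into ``the basic-open slice $I_{i_{x_j}} \cap S$ c.e.\ up to $S$'', which requires invoking property (4) of a computable topological space (via Lemma \ref{(4)-opcenito}) and the c.e.-ness of $\mathcal{C}$ to rewrite the membership test $I_i \cap N_j \neq \emptyset$ effectively; everything else is routine compactness and the already-proven closure of ``c.e.\ up to'' under finite unions.
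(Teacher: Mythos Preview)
Your argument is correct, but you have taken an unnecessary detour. The paper's proof avoids the entire ``middle step'' that you identify as the main obstacle.

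Here is the simplification you missed. You correctly choose, for each $x\in S$, a basic open set $I_{i_x}$ with $x\in I_{i_x}$ and $I_{i_x}\cap S\subseteq N_x$, and then extract finitely many $I_{i_{x_0}},\dots,I_{i_{x_n}}$ covering $S$. At this point, rather than defining $N_j=I_{i_{x_j}}\cap S$ and proving that each $N_j$ is c.e.\ up to $S$, simply observe that
\[
S=\bigcup_{j=0}^n (I_{i_{x_j}}\cap S)\subseteq \bigcup_{j=0}^n N_{x_j}\subseteq S,
\]
so $S=N_{x_0}\cup\dots\cup N_{x_n}$. The sets $N_{x_0},\dots,N_{x_n}$ are \emph{already} c.e.\ up to $S$ by hypothesis, so Proposition~\ref{unija-ce-upto} applies immediately. (The paper phrases this as: the interiors of the $N_x$ in $S$ form an open cover of $S$, compactness gives a finite subcover by interiors, and hence by the larger sets $N_{x_0},\dots,N_{x_n}$.)

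Your auxiliary claim---that if $A$ is c.e.\ up to $S$ and $i_0\in\mathbb{N}$, then $I_{i_0}\cap A$ is c.e.\ up to $S$, with witness $\Omega'=\{\,i\mid\exists k\;((k,i)\in\mathcal{C}\wedge(k,i_0)\in\mathcal{C}\wedge k\in\Omega)\,\}$---is a genuine and correctly proved lemma, and could be useful elsewhere; it is just not needed here.
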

\begin{proof}
For each $x\in S$ let $N_{x}$ be a neighborhood of $x$ in $S$ such
that $N_{x}$ is c.e.\ up to $S$. The sets $N_{x}$, $x\in S$, are
not necessarily open in $S$, but their interiors (in $S$) form an
open cover of $S$ and since $S$ is compact, there exist $x_{0}
,\dots ,x_{n} \in S$ such that
\begin{equation}\label{lokce-komp-ce-1}
S=N_{x_{0} }\cup \dots \cup N_{x_{n} }.
\end{equation}
Each of the sets $N_{x_{0} }$, \dots , $N_{x_{n} }$ is c.e.\ up to
$S$ and it follows from Proposition \ref{unija-ce-upto} and
(\ref{lokce-komp-ce-1}) that $S$ is c.e.\ up to $S$. So $S$ is
c.e.\ (it is closed since it is compact and $(X,\mathcal{T})$ is
Hausdorff).
\end{proof}

\section{Semicomputable manifolds} \label{sect-scm}
In this section let $n\in
\mathbb{N}\setminus \{0\}$ be fixed.

For $i\in \{1,\dots ,n\}$ let 
\begin{align*}
A_i &= \left\{ (x_1, \ldots ,
x_n)\in [-2,2]^n \mid x_i = -2 \right\},\\
B_i &= \left\{ (x_1,
\ldots , x_n)\in [-2,2]^n \mid x_i = 2 \right\},\\
C_i &=
\left\{ (x_1, \ldots, x_n) \in [-4,4]^n \mid x_i \leq 1
\right\},\\
D_i &= \left\{ (x_1, \ldots, x_n) \in [-4,4]^n \mid
x_i \geq -1 \right\}.
\end{align*}

 We will use the
following nontrivial topological fact (see Theorem 5.1 in
\cite{lmcs:mnf}, Corollary 3.2 in \cite{lmcs:cell} and Theorem
1.8.1 in \cite{engel}).

\begin{theorem} \label{pokrivanje-kvadrata}
Suppose $U_{1} ,\dots ,U_{n} $ and $V_{1} ,\dots ,V_{n} $ are open
subsets of $\mathbb{R}^{n} $ such that
$$U_{i} \cap A_{i}=\emptyset ,~V_{i} \cap B_{i}=\emptyset
\mbox{ and }U_{i} \cap V_{i} =\emptyset $$ for each $i\in
\{1,\dots,n\}$. Then $$[-2,2]^{n}\nsubseteq  U_{1} \cup \dots \cup
U_{n} \cup V_{1} \cup \dots \cup V_{n}.$$
\end{theorem}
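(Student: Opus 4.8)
The plan is to prove the statement directly, by exhibiting a point of $[-2,2]^n$ that belongs to none of the $U_i$ and none of the $V_i$. The only substantial tool is the Poincar\'{e}--Miranda theorem on a box (equivalent to Brouwer's fixed point theorem, and to the intersection property for partitions between opposite faces of $I^n$ referred to in \cite{engel}): if $f=(f_1,\dots,f_n)\colon[-2,2]^n\to\mathbb{R}^n$ is continuous and, for each $i\in\{1,\dots,n\}$, $f_i\le 0$ on the face $\{x_i=-2\}$ and $f_i\ge 0$ on the face $\{x_i=2\}$, then $f(z)=0$ for some $z\in[-2,2]^n$.

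First I would set, for each $i\in\{1,\dots,n\}$,
$$f_i(x)=d\bigl(x,\mathbb{R}^n\setminus U_i\bigr)-d\bigl(x,\mathbb{R}^n\setminus V_i\bigr),$$
where $d$ is the Euclidean distance. Both complements are nonempty, since they contain $A_i$ and $B_i$ respectively, so each $f_i$ is a well-defined continuous function on $\mathbb{R}^n$, and $f=(f_1,\dots,f_n)$ is continuous on $[-2,2]^n$.

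Next I would verify the hypotheses of Poincar\'{e}--Miranda. If $x\in U_i$, then $U_i$ is an open neighbourhood of $x$, so $d(x,\mathbb{R}^n\setminus U_i)>0$, while $U_i\cap V_i=\emptyset$ forces $x\notin V_i$, hence $d(x,\mathbb{R}^n\setminus V_i)=0$; thus $f_i(x)>0$. Symmetrically, $f_i(x)<0$ for $x\in V_i$. If $x\in A_i$, then $x\notin U_i$ (since $U_i\cap A_i=\emptyset$), so $d(x,\mathbb{R}^n\setminus U_i)=0$ and $f_i(x)=-d(x,\mathbb{R}^n\setminus V_i)\le 0$; similarly, if $x\in B_i$ then $x\notin V_i$ and $f_i(x)=d(x,\mathbb{R}^n\setminus U_i)\ge 0$. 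Hence Poincar\'{e}--Miranda applies and produces $z\in[-2,2]^n$ with $f_i(z)=0$ for all $i$. Because $f_i>0$ on $U_i$ and $f_i<0$ on $V_i$, this $z$ lies in no $U_i$ and in no $V_i$, which is exactly the assertion $[-2,2]^n\nsubseteq U_1\cup\dots\cup U_n\cup V_1\cup\dots\cup V_n$.

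I expect the main point to be precisely the one the distance-function construction is tailored to handle: on the face $A_i$ one only gets the weak inequality $f_i\le 0$ (a point of $A_i$ need not lie in $V_i$, in which case $f_i$ vanishes there), so a naive attempt to split $[-2,2]^n$, for each $i$, into two disjoint open pieces separated by a set that avoids $U_i\cup V_i$ and is genuinely disjoint from $A_i$ and $B_i$ does not go through cleanly; feeding the weak face inequalities directly into Poincar\'{e}--Miranda (equivalently, into the cited cube theorem) sidesteps this. Everything else is elementary, so apart from invoking that classical theorem there is nothing delicate.
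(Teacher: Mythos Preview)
Your argument is correct. The distance-function construction gives a continuous $f=(f_1,\dots,f_n)$ on $[-2,2]^n$ with the correct face signs, Poincar\'{e}--Miranda supplies a zero $z$, and the strict sign observations $f_i>0$ on $U_i$, $f_i<0$ on $V_i$ exclude $z$ from every $U_i$ and every $V_i$. There is nothing to fix.

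As for comparison with the paper: the paper does not prove this theorem at all. It records the statement as a ``nontrivial topological fact'' and cites earlier papers together with Theorem~1.8.1 of \cite{engel} (the theorem on partitions between opposite faces of the $n$-cube). What you have written is essentially the standard reduction of that cube/partition theorem to Poincar\'{e}--Miranda (equivalently Brouwer), so your proof supplies precisely the argument the paper chose to outsource to the references. In that sense there is no methodological divergence to compare---you are filling in the cited result rather than taking an alternative route.
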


\begin{lemma} \label{S-minus-J}
Let $(X,\mathcal{T},(I_{i} ))$ be a computable topological space
and $S$ a semicomputable set in this space.
\begin{enumerate}
\item[(i)] Let $m\in \mathbb{N}$. The set $S\setminus J_{m} $ is
semicomputable in $(X,\mathcal{T},(I_{i} ))$.

\item[(ii)] Let $k\in \mathbb{N}\setminus \{0\}$. The set
$\{(j_{1} ,\dots ,j_{k} )\in \mathbb{N}^{k} \mid S\subseteq
J_{j_{1} }\cup \dots \cup J_{j_{k} }\}$ is c.e.
\end{enumerate}
\end{lemma}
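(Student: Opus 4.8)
The statement is Lemma~\ref{S-minus-J}, with two parts. For part (i), the key observation is that $S \setminus J_m$ is again compact (it is a closed subset of the compact set $S$, being $S$ minus an open set), so it makes sense to ask whether it is semicomputable. The plan is to express the condition ``$S \setminus J_m \subseteq J_j$'' in terms of conditions we already know to be c.e. The natural reformulation is
$$
S \setminus J_m \subseteq J_j \iff S \subseteq J_m \cup J_j,
$$
and since $J_m \cup J_j = J_{c}$ for a value $c$ computable from $m$ and $j$ — indeed, using the notation from Subsection~\ref{subs-fr}, if we pick $c$ with $[c] = [m] \cup [j]$, which can be done effectively because $j \mapsto [j]$ is c.f.v.\ and the union of c.f.v.\ functions is c.f.v.\ (Proposition~\ref{cfv}(1)) — we get that $\{ j \mid S \setminus J_m \subseteq J_j \}$ is the preimage under a computable function of the c.e.\ set $\{ c \mid S \subseteq J_c \}$, hence c.e. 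So part (i) reduces to the single elementary fact that $J_m \cup J_j$ is (equal to) a rational open set whose index depends computably on $(m,j)$.

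**For part (ii),** I would argue by induction on $k$, again using the same ``merging'' trick. For $k=1$ the set in question is exactly $\{ j_1 \mid S \subseteq J_{j_1} \}$, which is c.e.\ by the definition of semicomputability. For the inductive step, note that $J_{j_1} \cup \dots \cup J_{j_k}$ equals $J_{c}$ where $[c] = [j_1] \cup \dots \cup [j_k]$, and such a $c$ can be obtained effectively from $(j_1, \dots, j_k)$ by iterating the binary merge from part (i) (or directly, since the map $(j_1,\dots,j_k) \mapsto [j_1] \cup \dots \cup [j_k]$ is c.f.v.\ by repeated application of Proposition~\ref{cfv}(1)); hence $\{(j_1,\dots,j_k) \mid S \subseteq J_{j_1} \cup \dots \cup J_{j_k}\}$ is the preimage of the c.e.\ set $\{ c \mid S \subseteq J_c \}$ under this computable map, and is therefore c.e. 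This makes the induction essentially unnecessary — a single application of the merging lemma suffices — but the cleanest exposition still isolates the claim ``$J_{j_1} \cup \dots \cup J_{j_k} = J_c$ for a computably-found $c$'' as the workhorse.

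**The only real content** is the bookkeeping around the effective enumeration $([j])_{j\in\mathbb{N}}$: one must check that given indices $j_1,\dots,j_k$ one can compute an index $c$ with $[c] = [j_1]\cup\dots\cup[j_k]$. This follows because $j \mapsto [j]$ has as its image \emph{all} nonempty finite subsets of $\mathbb{N}$, so some such $c$ exists, and one can search for it effectively using the computability of $\{(x,y) \mid y \in [x]\}$ together with the c.f.v.\ bound (Proposition~\ref{cfv}(3) lets us decide equality of the finite sets $[c]$ and $[j_1]\cup\dots\cup[j_k]$). I do not expect any genuine obstacle here; the lemma is a routine ``closure under finite unions'' fact whose proof is entirely a matter of translating set-theoretic unions into the index calculus provided by Subsection~\ref{subs-fr} and Proposition~\ref{cfv}. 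I would therefore keep the write-up short, proving (i) via the identity $S \setminus J_m \subseteq J_j \iff S \subseteq J_m \cup J_j$ and the merging of indices, and deriving (ii) as the $k$-fold analogue.
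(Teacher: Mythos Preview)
Your proposal is correct and matches the paper's approach essentially exactly: for (i) the paper simply cites Lemma~3.3 of \cite{lmcs:mnf} (whose content is precisely your equivalence $S\setminus J_m\subseteq J_j\Leftrightarrow S\subseteq J_m\cup J_j$ together with the computable merge of indices), and for (ii) the paper reduces to finding a computable $\varphi$ with $J_{j_1}\cup\dots\cup J_{j_k}=J_{\varphi(j_1,\dots,j_k)}$ via the binary case, using Proposition~\ref{cfv}(1) and (3) just as you do. Your write-up is in fact more explicit than the paper's for part (i).
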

\begin{proof}
Claim (i) can be proved in the same way as Lemma 3.3 in
\cite{lmcs:mnf}. For (ii), it is enough to prove that there exists
a computable function $\varphi :\mathbb{N}^{k} \rightarrow
\mathbb{N}$ such that $J_{j_{1} }\cup \dots \cup J_{j_{k}
}=J_{\varphi (j_{1} ,\dots ,j_{k} )}$ for all $j_{1} ,\dots
,j_{k}\in \mathbb{N}$. For this, it is enough to prove that there
exists a computable function $\varphi :\mathbb{N}^{2}\rightarrow
\mathbb{N}$ such that $J_{a}\cup J_{b}=J_{\varphi (a,b)}$ for all
$a,b\in \mathbb{N}$. The function $\mathbb{N}^{2}\rightarrow
\mathcal{P}(\mathbb{N})$, $(a,b)\mapsto [a]\cup [b]$ is c.f.v.\
(Proposition \ref{cfv}(1)) and for all
$a,b\in \mathbb{N}$ there exists $c\in \mathbb{N}$ such that
$[a]\cup [b]=[c]$. The set $\{(a,b,c)\in \mathbb{N}^{3}\mid
[a]\cup [b]=[c]\}$ is computable (Proposition \ref{cfv}(3)) and
therefore for all $a,b\in \mathbb{N}$ we can effectively find
$c\in \mathbb{N}$ such that $[a]\cup [b]=[c]$.
\end{proof}

In this paper we seek for conditions under which a semicomputable
set is computable. Equivalently, we seek for conditions under
which a semicomputable set is c.e. The next theorem is one of the
main results of the paper. It gives a sufficient condition that a
semicomputable set is c.e.\ at some point.

\begin{theorem} \label{lokalno-eukl}
Let $(X,\mathcal{T},(I_i))$ be a computable topological space, let
$S$ be a semicomputable set in this space and let $x \in S$.
Suppose that there exists a neighborhood of $x$ in $S$ which is
homeomorphic to some $\mathbb{R}^{n} $. Then $S$ is c.e.\ at $x$.
\end{theorem}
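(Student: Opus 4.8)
The plan is to exploit the homeomorphism $h:\mathbb{R}^n\to N$, where $N$ is a neighborhood of $x$ in $S$, to transport the rigid geometric configuration of the sets $A_i,B_i,C_i,D_i$ and Theorem \ref{pokrivanje-kvadrata} into the space $X$, and then use semicomputability of $S$ together with the effective separation machinery of Section \ref{sect-escs} to produce a c.e.\ witness set for a neighborhood of $x$. More precisely, I would first fix a homeomorphism and, after an affine reparametrization of $\mathbb{R}^n$, arrange that $x=h(0)$ and that $h([-4,4]^n)$ lies in the interior (relative to $S$) of $N$; then $K:=h([-2,2]^n)$ is a compact neighborhood of $x$ in $S$, and the images $h(A_i),h(B_i)$ are compact and disjoint from the respective images $h(C_i),h(D_i)$ (since $A_i\cap D_i=\emptyset$ and $B_i\cap C_i=\emptyset$ as subsets of $[-4,4]^n$), while $h(C_i)\cup h(D_i)=h([-4,4]^n)\supseteq K$.

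The core combinatorial idea: I want to show the set
$$\Omega=\{\, i\in\mathbb N \mid \exists a,b\in\mathbb N:\ S\subseteq J_a\cup J_b,\ I_i\subseteq_{\mathcal C}J_a,\ K'\subseteq J_b \text{ for a suitable }K'\,\}$$
(or something of this flavour) is a c.e.\ set serving as the witness that $K$, or rather its interior, is c.e.\ up to $S$. Concretely I would run the following semidecision procedure for a given $i$: search for numbers certifying that $S$ is covered by basic sets, that $I_i$ is $\mathcal C$-contained in the part of the cover ``far from'' the $h(A_i),h(B_i)$ side, and that the pieces covering $h(A_i)$ resp.\ $h(B_i)$ are $\mathcal D$-disjoint from the pieces covering $h(C_i)$ resp.\ $h(D_i)$ in the appropriate pattern. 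All these conditions are c.e.: membership $S\subseteq J_a\cup J_b$ is c.e.\ by Lemma \ref{S-minus-J}(ii); $\subseteq_{\mathcal C}$ and $\diamond_{\mathcal D}$ relations are c.e.\ by Proposition \ref{c.e.-CD}; and Theorem \ref{separacija-komp} guarantees that whenever the geometric configuration genuinely holds, such certifying numbers exist, so the search succeeds. The topological input from Theorem \ref{pokrivanje-kvadrata} (transported through $h$) is what forces: if $I_i$ avoids $K$ entirely then no such covering-with-the-required-disjointness-pattern can cover all of $S\cap h([-2,2]^n)$, hence the procedure does \emph{not} halt for such $i$; conversely if $I_i$ meets the interior of $K$ then one can complete the pattern and the procedure halts. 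Thus $\Omega$ will satisfy $I_i\cap\mathrm{Int}_S K\neq\emptyset\Rightarrow i\in\Omega$ and $i\in\Omega\Rightarrow I_i\cap S\neq\emptyset$, which is exactly ``$\mathrm{Int}_S K$ is c.e.\ up to $S$'', i.e.\ $S$ is c.e.\ at $x$.

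The main obstacle I anticipate is getting the quantifier structure of $\Omega$ exactly right so that both implications hold simultaneously. One direction (if $i\in\Omega$ then $I_i$ meets $S$) must be forced purely by the arithmetic of $\mathcal C,\mathcal D$ and the covering condition, using properties (1)--(7) of proper characteristic relations — essentially: if $I_i\subseteq_{\mathcal C}J_a$ and $S\subseteq J_a\cup J_b$ with the $\mathcal D$-disjointness pattern, then $I_i\subseteq J_a$ must actually touch $S$, which is where the transported Theorem \ref{pokrivanje-kvadrata} does the heavy lifting: the disjointness pattern prevents $J_a$ from ``escaping through the sides'', so $I_i\neq\emptyset$ implies $I_i$ meets $S$. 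The other direction (if $I_i$ meets $\mathrm{Int}_S K$ then $i\in\Omega$) is a routine application of Theorem \ref{separacija-komp} to the finite family $\{K, h(A_i), h(B_i), h(C_i), h(D_i) : i\}$ together with the fact that a point of $I_i\cap\mathrm{Int}_S K$ can be surrounded by a basic set $\mathcal C$-contained in the covering pieces. The delicate bookkeeping will be organizing the $n$-fold index apparatus (one pair of ``forbidden faces'' per coordinate direction) and verifying that the single semidecision search can test all $2n$ disjointness conditions at once; I expect this to be the most technically involved part, but conceptually it is a direct transcription of the metric argument in \cite{lmcs:mnf} with ``formal disjointness'' replaced by ``$\mathcal D$-disjointness'' and ``formal inclusion'' by ``$\mathcal C$-containment''.
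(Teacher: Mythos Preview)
Your overall strategy --- transport the $A_i,B_i,C_i,D_i$ configuration through the chart, invoke Theorem~\ref{separacija-komp} to manufacture witnesses, and use Theorem~\ref{pokrivanje-kvadrata} for soundness --- is exactly the paper's. But two essential ingredients are missing from your sketch, and without them the argument does not close.

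First, you never localize $S$. Your semidecision test wants ``$S$ covered by basic sets'' arranged in the $A_i/B_i$ pattern, but $S$ may extend far beyond $h([-4,4]^n)$; any extra pieces covering the rest of $S$ can meet $h([-2,2]^n)$ uncontrollably and wreck the application of Theorem~\ref{pokrivanje-kvadrata}. The paper fixes this by first choosing (via Lemma~\ref{K-L-disj}) a single $m_0$ with $S\setminus h(\langle-4,4\rangle^n)\subseteq J_{m_0}$ and $J_{m_0}\cap h([-2,2]^n)=\emptyset$, and then running the entire search over $S'=S\setminus J_{m_0}$, which is still semicomputable (Lemma~\ref{S-minus-J}) and is sandwiched between $h([-2,2]^n)$ and $h([-4,4]^n)$. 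Second, your handling of the c.e.\ conditions is inverted in two places. You cannot test ``the pieces covering $h(A_i)$ are $\mathcal{D}$-disjoint from the pieces covering $h(D_i)$'', because $h(A_i),h(D_i)$ are not semicomputable; instead the paper \emph{precomputes fixed} indices $c_i,d_i$ (outside the search, depending only on $h$) with $h(C_i)\subseteq J_{c_i}$, $J_{c_i}\cap h(B_i)=\emptyset$ and likewise for $d_i$, and the c.e.\ condition on the search variable $a_i$ is simply $J_{a_i}\subseteq_{\mathcal C}J_{c_i}$, which then forces $J_{a_i}\cap h(B_i)=\emptyset$. And the relation between $I_l$ and the cover is not $I_l\subseteq_{\mathcal C}J_a$ but the reverse: one searches for a piece $J_e$ of the cover with $J_e\subseteq_{\mathcal C}J_{\varphi(l)}=I_l$; Theorem~\ref{pokrivanje-kvadrata} then forces $J_e$ (hence $I_l\supseteq J_e$) to meet $h([-2,2]^n)\subseteq S$. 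With your stated direction $I_l\subseteq_{\mathcal C}J_a$ there is no way to conclude $I_l\cap S\neq\emptyset$.
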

\begin{proof}
Let $N$ be a neighborhood of $x$ in $S$ which is homeomorphic to
$\mathbb{R}^{n} $. We may assume that $N$ is open in $S$ (as in
the proof of Theorem 5.6 in \cite{lmcs:mnf}). Let
$f:\mathbb{R}^{n} \rightarrow N$ be a homeomorphism. We may also
assume that $f(0)=x$.

For $a,b\in \mathbb{R}$ we will denote by $\langle a,b\rangle $
the open interval $\{x\in \mathbb{R}\mid a<x<b\}$. The set
$f\left( \langle -4,4 \rangle^n \right)$ is open in $N$ and
therefore it is open in $S$. It follows that $S\setminus f\left(
\langle -4,4 \rangle^n \right)$ is compact (it is closed in the
compact set $S$). This set is clearly disjoint with the compact
set $f([-2,2])^{n} $ and Lemma \ref{K-L-disj} implies that there
exists $m_0 \in \mathbb{N}$ such that
$$
S \setminus f\left( \langle -4,4 \rangle^n \right) \subseteq
J_{m_0} \mbox{ and } J_{m_0} \cap f([-2,2]^n) = \emptyset.
$$
Let $S' = S \setminus J_{m_0}$. By Lemma \ref{S-minus-J}(i) $S'$
is semicomputable in $(X,\mathcal{T}, (I_i))$ and we have
\begin{equation}\label{S'-4}
f([-2,2]^n) \subseteq S' \subseteq f\left( [ -4,4 ]^n \right).
\end{equation}

Let $i \in \{ 1, \ldots, n \}$. The sets  $A_i$, $B_i$, $C_i$ and
$D_i$ (defined at the begin of this section) are clearly compact
in $\mathbb{R}^{n} $ and we have $A_{i} \cap D_{i} =\emptyset $,
$B_{i} \cap C_{i} =\emptyset $. Therefore $f(A_i)$, $f(B_i)$,
$f(C_i)$ and $f(D_i)$ are compact in $(X,\mathcal{T})$ and
$f(A_{i}) \cap f(D_{i}) =\emptyset $, $f(B_{i}) \cap f(C_{i})
=\emptyset $. By Lemma \ref{K-L-disj} there exist $d_i$, $c_i \in
\mathbb{N}$ such that
\begin{equation}
\label{C-i-B} f(C_i) \subseteq J_{c_i} \text{ and } J_{c_i} \cap
f(B_i) = \emptyset,
\end{equation}
\begin{equation}
\label{D-i-A} f(D_i) \subseteq J_{d_i} \text{ and }J_{d_i} \cap
f(A_i) = \emptyset.
\end{equation}

Choose a computable function $\varphi \colon \mathbb{N} \to
\mathbb{N}$ such that $I_i = J_{\varphi(i)}$ for each $i \in
\mathbb{N}$ (such a function certainly exists).

Let us assume that $l \in \mathbb{N}$ is such that
$$
I_l \cap f \left( [-1,1]^n \right) \neq \emptyset.
$$
Then there exists $v \in [-1,1]^n$, $v = (v_1, \ldots, v_n)$, such
that  $f(v) \in I_l$ and so $v \in f^{-1}(I_l)$. Since
$f^{-1}(I_l)$ is open in $\mathbb{R}^{n} $, there exists
$\epsilon > 0$ such that
\begin{equation}\label{E-v1-vn}
[v_1 - \epsilon, v_1 + \epsilon] \times \ldots \times [v_n -
\epsilon, v_n + \epsilon] \subseteq f^{-1}(I_l).
\end{equation}
We may assume $\epsilon < 1$. Let $ E = [v_1 - \epsilon, v_1 +
\epsilon] \times \ldots \times [v_n - \epsilon, v_n + \epsilon] $.
By (\ref{E-v1-vn}) we have
 $f(E) \subseteq I_l$, i.e.
\begin{equation}\label{E-fi-l}
f(E) \subseteq J_{\varphi(l)}.
\end{equation}

For $i \in \{1, \ldots, n \}$ let
$$\tilde{A}_i = \left\{ (x_1, \ldots, x_n) \in [-4,4]^n \mid x_i \leq v_i - \epsilon
\right\},$$ $$\tilde{B}_i = \left\{ (x_1, \ldots, x_n) \in
[-4,4]^n \mid x_i \geq v_i + \epsilon \right\}.$$

Note that
\begin{equation}
\label{eq: eq3 iz thm: S poluizracunljiv i postoji otv okolina N
od x iz S td je N cong Rn tada je S izracunljivo prebrojiv u tocki
x} \tilde{A}_i \subseteq C_i \text{ and } \tilde{B}_i \subseteq
D_i
\end{equation}
for each $i \in \{1, \ldots, n\}$. Furthermore
$$
\tilde{A}_1 \cup \tilde{B}_1 \cup \ldots \cup \tilde{A}_n \cup
\tilde{B}_n \cup E = [-4,4]^n
$$
and so
\begin{equation}
\label{eq: eq4 iz thm: S poluizracunljiv i postoji otv okolina N
od x iz S td je N cong Rn tada je S izracunljivo prebrojiv u tocki
x} f(\tilde{A}_1) \cup f(\tilde{B}_1) \cup \ldots \cup
f(\tilde{A}_n) \cup f(\tilde{B}_n) \cup f(E) = f \left( [-4,4]^n
\right).
\end{equation}
For each $i \in \{ 1, \ldots, n\}$ we have $\tilde{A}_i \cap
\tilde{B}_i = \emptyset$, thus
\begin{equation}\label{Atilda-disj}
f(\tilde{A}_i) \cap f(\tilde{B}_i) = \emptyset.
\end{equation}

By (\ref{eq: eq3 iz thm: S poluizracunljiv i postoji otv okolina N
od x iz S td je N cong Rn tada je S izracunljivo prebrojiv u tocki
x}) for each $i \in \{1, \ldots, n \}$ we have $f(\tilde{A}_i)
\subseteq f(C_i)$ and $f(\tilde{B}_i) \subseteq f(D_i)$ which,
together with (\ref{C-i-B}) and (\ref{D-i-A}), gives
\begin{equation}\label{Atilda-Ci}
f(\tilde{A}_i) \subseteq J_{c_i} \text{ and } f(\tilde{B}_i)
\subseteq J_{d_i}.
\end{equation}
The sets $f(\tilde{A}_1), \ldots, f(\tilde{A}_n),f(\tilde{B}_1),
\ldots, f(\tilde{B}_n), f(E)$ are nonempty and compact in $(X,
\mathcal{T})$. By Theorem \ref{separacija-komp},
(\ref{Atilda-disj}), (\ref{Atilda-Ci}) and (\ref{E-fi-l}) there
exist numbers $a_1, \ldots, a_n,b_1, \ldots, b_n, e  \in
\mathbb{N}$ such that for each $i \in \{ 1, \ldots, n \}$
$$f(\tilde{A}_i) \subseteq J_{a_i},~~ f(\tilde{B}_i) \subseteq J_{b_i},~~ f(E) \subseteq
J_e,$$
$$J_{a_i} \subseteq _{\mathcal{C}} J_{c_i},~~ J_{b_i} \subseteq
_{\mathcal{C}} J_{d_i},~~ J_e \subseteq _{\mathcal{C}}
J_{\varphi(l)} \text{ and }
J_{a_i}\diamond_{\mathcal{D}}J_{b_i}.$$

It follows from (\ref{S'-4}) and (\ref{eq: eq4 iz thm: S
poluizracunljiv i postoji otv okolina N od x iz S td je N cong Rn
tada je S izracunljivo prebrojiv u tocki x}) that
$$
S' \subseteq J_{a_1} \cup J_{b_1} \cup \ldots \cup J_{a_n} \cup
J_{b_n} \cup J_e.
$$

We have proved the following: if $l \in \mathbb{N}$ is such that
$I_l \cap f([-1,1]^n) \neq \emptyset$, then there exist $a_1,
\ldots, a_n, b_1, \ldots, b_n, e \in \mathbb{N}$ such that
\begin{enumerate}
\item[(1)] $J_{a_i} \subseteq _{\mathcal{C}} J_{c_i}$ for each $i
\in \{ 1, \ldots, n \}$;

\item[(2)] $J_{b_i}\subseteq _{\mathcal{C}}  J_{d_i}$ for each $i
\in \{1, \ldots, n \}$;

\item[(3)] $J_e \subseteq _{\mathcal{C}}  J_{\varphi(l)}$
\item[(4)] $J_{a_i}\diamond_{\mathcal{D}}J_{b_i}$ for each $ i \in
\{ 1, \ldots, n \}$

\item[(5)] $S' \subseteq J_{a_1} \cup J_{b_1} \cup \ldots \cup
J_{a_n} \cup J_{b_n} \cup J_{e}$.
\end{enumerate}

Let $\Gamma$ be the set of all $(l,a_1, \ldots, a_n,b_1, \ldots,
b_n,e) \in \mathbb{N}^{2n+2}$ such that (1) - (5) hold.
Furthermore, let $\Omega$ be the set of all $l\in \mathbb{N}$ for
which there exist  $a_1, \ldots, a_n, b_1, \ldots, b_n, e \in
\mathbb{N}$ such that $$(l,a_1,\ldots,a_n,b_1,\ldots,b_n,e) \in
\Gamma.$$ Note that for each $l\in \mathbb{N}$ we have the
following implication
$$I_l \cap f([-1,1]^n) \neq \emptyset \implies l \in \Omega.$$

Using Proposition \ref{c.e.-CD} and Lemma \ref{S-minus-J}(2) we
easily conclude that $\Gamma $ is a c.e.\ set as the intersection
of finitely many c.e.\ sets. It follows that $\Omega $ is also
c.e.

We now prove the following: if $l\in \Omega $, then $I_l \cap S
\neq \emptyset$.

Suppose $l \in \Omega$. Then there exist $a_1, \ldots, a_n,b_1,
\ldots, b_n, e \in \mathbb{N}$ such that \\
 $(l,a_1, \ldots, a_n, b_1, \ldots, b_n, e) \in \Gamma$.
 So, for the numbers $l,a_1, \ldots, a_n, b_1, \ldots, b_n, e$
 statements  (1)--(5) hold.

Since $f([-2,2]^n) \subseteq S'$, by (5) we have $$ f([-2,2]^n)
\subseteq J_{a_1} \cup J_{b_1} \cup \ldots \cup J_{a_n} \cup
J_{b_n} \cup J_{e}
$$
and it follows
\begin{equation}
\label{eq:eq5-iz-thm} [-2,2]^n \subseteq f^{-1}(J_{a_1})
\cup f^{-1}(J_{b_1}) \cup \ldots \cup
f^{-1}(J_{a_n}) \cup f^{-1}(J_{b_n})\cup
f^{-1}(J_e).
\end{equation}

Let $i \in \{1, \ldots, n \}$. It follows from (1) and
(\ref{C-i-B}) that $J_{a_i} \cap f(B_i) = \emptyset$ and therefore
\begin{equation}
\label{eq:eq6-iz-thm} f^{-1}(J_{a_i}) \cap B_i =
\emptyset.
\end{equation}
Similarly, from (2) and (\ref{D-i-A}) we get
\begin{equation}
\label{eq:eq7-iz-thm} f^{-1}(J_{b_i}) \cap A_i =
\emptyset.
\end{equation}
By (4) we have
\begin{equation}
\label{eq:eq8-iz-thm} f^{-1}(J_{a_i}) \cap
f^{-1}(J_{b_i}) = \emptyset.
\end{equation}
The sets $f^{-1}(J_{a_1}), \ldots,
f^{-1}(J_{a_n}), f^{-1}(J_{b_1}), \ldots,
f^{-1}(J_{b_n})$ are open in $\mathbb{R}$. It follows from
(\ref{eq:eq6-iz-thm}), (\ref{eq:eq7-iz-thm}),
(\ref{eq:eq8-iz-thm}) and Theorem \ref{pokrivanje-kvadrata} that
$$
[-2,2]^n \nsubseteq f^{-1}(J_{a_1}) \cup
f^{-1}(J_{b_1}) \cup \ldots \cup f^{-1}(J_{a_n})
\cup f^{-1}(J_{b_n}).
$$
This and (\ref{eq:eq5-iz-thm}) give
$$
[-2,2]^n \cap f^{-1}(J_e) \neq \emptyset.
$$
So $f([-2,2]^n) \cap J_e \neq \emptyset$ and (3) implies
$f([-2,2]^n) \cap J_{\varphi(l)} \neq \emptyset$. Hence $I_l \cap
f([-2,2]^n) \neq \emptyset$ and therefore $I_l \cap S \neq
\emptyset$.

We have proved that for each $l \in \mathbb{N}$ the following
implications hold:
\begin{enumerate}
\item[$i$)] $I_l \cap f([-1,1]^n) \neq \emptyset \Rightarrow l \in
\Omega$ \item[$ii$)] $l \in \Omega \Rightarrow I_l \cap S \neq
\emptyset$.
\end{enumerate}
Since $f([-1,1]^n)$ is a neighborhood of $x$ u $S$, this proves
the theorem.
\end{proof}

Let $n \in \mathbb{N} \setminus \{ 0 \}$. A
topological space $X$ is said to be an $n$--\textbf{manifold} if
each point $x \in X$ has a neighborhood in $X$ which is
homeomorphic to $\mathbb{R}^{n} $.

\begin{theorem} \label{manifold-tm}
Let $(X,\mathcal{T}, (I_i))$ be a computable topological space and
let $S$ be a semicomputable set in this space which is, as a
subspace $(X,\mathcal{T})$, a manifold. Then $S$ is a computable
set in $(X,\mathcal{T}, (I_i))$.
\end{theorem}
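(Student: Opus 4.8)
The plan is to combine Theorem~\ref{lokalno-eukl} with Proposition~\ref{comp-loc-ce-comp} and the characterization of computable sets. Since $S$ is a manifold (as a subspace of $(X,\mathcal{T})$), every point $x\in S$ has a neighborhood in $S$ homeomorphic to $\mathbb{R}^{n}$ (for the fixed $n$ of this section; note the manifold is of some dimension, and we may take $n$ to be that dimension). Hence Theorem~\ref{lokalno-eukl} applies at every $x\in S$, giving that $S$ is c.e.\ at $x$ for each $x\in S$; that is, $S$ is locally c.e.\ in $(X,\mathcal{T},(I_i))$.

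Next I would invoke compactness. A semicomputable set is by definition compact, so $S$ is compact in $(X,\mathcal{T})$. Proposition~\ref{comp-loc-ce-comp} then yields that $S$ is c.e.\ in $(X,\mathcal{T},(I_i))$. Since $S$ is both semicomputable (by hypothesis) and c.e., it is computable by the definition of a computable set in a computable topological space. This completes the argument.

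The one genuine subtlety to address is the role of the dimension $n$, which is fixed throughout the section. If $S$ is a manifold, all its points have Euclidean neighborhoods, but a priori different connected components could have different dimensions; however, since $S$ is compact it has only finitely many components, so one could either restrict attention to each dimension separately and apply Proposition~\ref{unija-ce-upto} to patch the pieces together, or simply note that the statement as phrased concerns an $n$-manifold for the fixed $n$. In practice the cleanest route is: for each $x\in S$, pick a Euclidean neighborhood; its dimension is locally constant, and compactness bounds the dimensions; apply Theorem~\ref{lokalno-eukl} for each relevant $n$. I do not expect any real obstacle here — the heavy lifting was already done in Theorem~\ref{lokalno-eukl}; this theorem is essentially a packaging of local-to-global (compactness) plus the semicomputable-plus-c.e.-equals-computable principle.
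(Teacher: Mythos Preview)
Your proposal is correct and matches the paper's own proof exactly: apply Theorem~\ref{lokalno-eukl} at each point to get local c.e., then Proposition~\ref{comp-loc-ce-comp} to get c.e., hence computable. Your extra caution about varying dimension is unnecessary here since the statement concerns an $n$-manifold for the fixed $n$ of the section, but it does no harm.
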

\begin{proof}
By Theorem \ref{lokalno-eukl} $S$ is locally c.e. By Proposition
\ref{comp-loc-ce-comp} $S$ is c.e. So $S$ is computable.
\end{proof}

\section{Semicomputable manifolds with computable boundaries} \label{sect-scm-wb}
For $n \in \mathbb{N} \setminus \{ 0 \}$ let
$$
\mathbb{H}^n = \left\{ (x_1, \ldots, x_n) \in \mathbb{R}^n \mid
x_n \geq 0 \right\}
$$
and
$$
\Bd \mathbb{H}^n = \left\{ (x_1, \ldots, x_n) \in \mathbb{R}^n
\mid x_n = 0 \right\}.
$$

A
topological space $X$ is said to be an
$n$--\textbf{manifold with boundary} if for each $x \in X$ there
exists a neighborhood $N$ of $x$ in $X$ such that one of the
following holds:
\begin{enumerate}
\item[(1)] $N$ is homeomorphic to $\mathbb{R}^n$;
\item[(2)] there
exists a homeomorphism $f \colon \mathbb{H}^n \to N$ such that $x
\in f(\Bd \mathbb{H}^n)$.
\end{enumerate}

If $X$ is an $n$--manifold with boundary, we define $\partial X$
to be the set of all $x \in X$ such that $x$ has a neighborhood
$N$ with property (2). We say that $\partial X$ is the
\textbf{boundary} of the manifold $X$.

Each manifold is clearly a manifold with boundary. Conversely, if
$X$ is a manifold with boundary and $\partial X=\emptyset $, then
$X$ is a manifold.

It can be shown (see \cite{mu}) that if a point $x$ in a
topological space $X$ has a neighborhood which satisfies (1), then
it cannot have a neighborhood which satisfies (2). So a manifold
with boundary $X$ is a manifold if and only if $\partial
X=\emptyset $.

In order to prove that a semicomputable manifold $S$ with
computable boundary is computable, we will need Theorem
\ref{lokalno-eukl}, but we will also need an analogue of this
theorem which deals with points from $\partial S$ (Theorem
\ref{lokalno-eukl-rub}). First, we have a lemma.

\begin{lemma}
\label{prop: A_i, A_n. B_i prije teoream sa rubom} Let $n \in
\mathbb{N} \setminus \{ 0 \}$. For $i \in \{ 1, \ldots, n \}$ let
$$
B_i = \{ (x_1, \ldots, x_n) \in [-2,2]^{n-1} \times [0,2] \mid x_i
= 2 \}.
$$
For $i \in \{ 1, \ldots, n-1 \}$ let
$$
A_i = \{ (x_1, \ldots, x_n) \in [-2,2]^{n-1} \times [0,2] \mid x_i
= -2 \}
$$
and let
$$
A_n = \{ (x_1, \ldots, x_n) \in [-2,2]^{n-1} \times [0,2] \mid x_n
= 0 \}.
$$
Then there exist no open subsets $U_1, \ldots, U_n, V_1, \ldots,
V_n$ of $\mathbb{H}^n$ such that
\begin{equation}
\label{eq: eq1 propozicija A_i, A_n. B_i prije teoream sa rubom}
U_i \cap B_i = \emptyset,~~ V_i \cap A_i = \emptyset ~~\mbox{ and
} ~~ U_i \cap V_i = \emptyset
\end{equation}
for each $i \in \{ 1, \ldots, n \}$ and such that
\begin{equation}
\label{eq: eq2 propozicija A_i, A_n. B_i prije teoream sa rubom}
[-2,2]^{n-1} \times [0,2] \subseteq U_1 \cup \ldots \cup U_n \cup
V_1 \cup \ldots \cup V_n.
\end{equation}
\end{lemma}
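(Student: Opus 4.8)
The plan is to reduce the statement to Theorem~\ref{pokrivanje-kvadrata} by pulling everything back along the retraction
$$ r \colon \mathbb{R}^n \longrightarrow \mathbb{H}^n, \qquad r(x_1, \ldots, x_n) = \bigl(x_1, \ldots, x_{n-1}, \max(x_n,0)\bigr), $$
which is continuous (each component is continuous, the last being the maximum of two continuous functions) and maps the cube $[-2,2]^n$ into $[-2,2]^{n-1}\times[0,2]$. A natural alternative would be to double $[-2,2]^{n-1}\times[0,2]$ by reflecting across $\Bd\mathbb{H}^n$, but the doubled sets need not be open across the equatorial slice, which creates bookkeeping trouble; the point of using $r$ instead is that it \emph{collapses} the lower facet $\{x_n=-2\}$ of the cube onto $[-2,2]^{n-1}\times\{0\}=A_n$, carries the facet $\{x_n=2\}$ onto $B_n$, and carries each facet $\{x_i=\pm 2\}$ with $i\le n-1$ onto the corresponding facet $A_i$ or $B_i$ of $[-2,2]^{n-1}\times[0,2]$.

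Concretely I would argue by contradiction. Suppose open subsets $U_1,\dots,U_n,V_1,\dots,V_n$ of $\mathbb{H}^n$ satisfying \eqref{eq: eq1 propozicija A_i, A_n. B_i prije teoream sa rubom} and \eqref{eq: eq2 propozicija A_i, A_n. B_i prije teoream sa rubom} exist, and set $U_i' = r^{-1}(U_i)$ and $V_i' = r^{-1}(V_i)$. These are open in $\mathbb{R}^n$ and $U_i'\cap V_i' = r^{-1}(U_i\cap V_i)=\emptyset$ for each $i$. Using the facet observations above together with $U_i\cap B_i=\emptyset$ and $V_i\cap A_i=\emptyset$, one checks that for every $i\in\{1,\dots,n\}$ the set $U_i'$ is disjoint from the facet $\{x\in[-2,2]^n\mid x_i=2\}$ and $V_i'$ is disjoint from $\{x\in[-2,2]^n\mid x_i=-2\}$ (these facets of $[-2,2]^n$ are the sets $A_i$ and $B_i$ of Theorem~\ref{pokrivanje-kvadrata}, which refer to the cube, not to the box of the present section). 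Moreover, since $r([-2,2]^n)\subseteq[-2,2]^{n-1}\times[0,2]$, \eqref{eq: eq2 propozicija A_i, A_n. B_i prije teoream sa rubom} gives $[-2,2]^n\subseteq r^{-1}\bigl([-2,2]^{n-1}\times[0,2]\bigr)\subseteq\bigcup_i U_i'\cup\bigcup_i V_i'$. Now apply Theorem~\ref{pokrivanje-kvadrata} to the families $(V_i')_i$ and $(U_i')_i$, that is, with the roles of $U$ and $V$ interchanged, which is legitimate because that theorem is stated symmetrically in the pairs $(A_i,B_i)$: it yields $[-2,2]^n\nsubseteq\bigcup_i V_i'\cup\bigcup_i U_i'$, contradicting the covering just established. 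Hence no such $U_i,V_i$ exist.

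The only step that requires genuine care is verifying that $r$ sends the lower facet $\{x_n=-2\}$ of the cube into $A_n$, and, correspondingly, noticing that the hypothesis $V_n\cap A_n=\emptyset$ is exactly what is needed to control $V_n'$ near that facet; once this is in place, the remaining verifications (continuity of $r$, the images of the other facets, openness of the pullbacks, and the covering) are routine, and the $U\leftrightarrow V$ relabeling in the invocation of Theorem~\ref{pokrivanje-kvadrata} is purely cosmetic. I would also state explicitly which meaning of $A_i,B_i$ is in force at each point, since in Section~\ref{sect-scm} these symbols denote facets of $[-2,2]^n$ whereas in this lemma they denote facets of $[-2,2]^{n-1}\times[0,2]$.
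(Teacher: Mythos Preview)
Your proof is correct and follows essentially the same approach as the paper's: both argue by contradiction, pull back the hypothetical cover along a continuous map $\mathbb{R}^n\to\mathbb{H}^n$ that sends $[-2,2]^n$ into $[-2,2]^{n-1}\times[0,2]$ and carries each facet of the cube into the corresponding facet $A_i$ or $B_i$, then invoke Theorem~\ref{pokrivanje-kvadrata}. The only difference is cosmetic: the paper uses the map $(z_1,\dots,z_n)\mapsto(z_1,\dots,z_{n-1},f(z_n))$ with $f(t)=\max\!\bigl(\tfrac{t+2}{2},0\bigr)$ rather than your clamping $\max(x_n,0)$, and it leaves the $U\leftrightarrow V$ relabeling implicit where you make it explicit.
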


\begin{proof}
Suppose the opposite, i.e.\ suppose that there exist sets
 $U_1, \ldots, U_n, V_1, \ldots,
V_n$ with the above properties.

Let $f \colon \mathbb{R} \to [0,\infty\rangle $ be defined by
$$
f(x) =
\begin{cases}
 \frac{x+2}{2}, & x \geq -2 \\
 0, & x \leq -2.
\end{cases}
$$
Let $\gamma \colon \mathbb{R}^n \to \mathbb{H}^n$ be defined by
$$
\gamma(z_1, \ldots,z_{n-1}, z_n) =(z_{1} ,\ldots, z_{n-1},f(z_{n}
)).$$ Since $f$ is continuous, $\gamma $ is also continuous. We
have
$$
\gamma([-2,2]^n) = [-2,2]^{n-1} \times [0,2].
$$
From this and (\ref{eq: eq2 propozicija A_i, A_n. B_i prije
teoream sa rubom}) it follows
\begin{equation} \label{eq: eq4
propozicija A_i, A_n. B_i prije teoream sa rubom} [-2,2]^n
\subseteq \gamma^{-1}(U_1) \cup \ldots \cup
\gamma^{-1} (U_n) \cup \gamma^{-1}(V_1) \cup
\ldots \cup \gamma^{-1}(V_n).
\end{equation}

By (\ref{eq: eq1 propozicija A_i, A_n. B_i prije teoream sa
rubom}) for each $i \in \{ 1, \ldots, n \}$ we have
\begin{equation}
\label{eq: eq5 propozicija A_i, A_n. B_i prije teoream sa rubom}
\gamma^{-1}(U_i) \cap \gamma^{-1}(V_i) =
\emptyset.
\end{equation}

For $i \in \{ 1, \ldots, n\}$ let
\begin{align*}
\tilde{A}_i &=
\left\{ (x_1, \ldots, x_n) \in [-2,2]^n \mid x_i = -2 \right\}\\
\tilde{B}_i &= \left\{ (x_1, \ldots, x_n) \in [-2,2]^n \mid x_i =
2 \right\}.
\end{align*}

Let $i \in \{ 1, \ldots, n \}$. We have
$$
\gamma(\tilde{B}_i) \subseteq B_i ~~\mbox{ and } ~~
\gamma(\tilde{A}_i) \subseteq A_i.
$$
Since $U_i \cap B_i = \emptyset$, we have $U_i \cap
\gamma(\tilde{B}_i) = \emptyset$ and consequently
\begin{equation}
\label{eq: eq6 propozicija A_i, A_n. B_i prije teoream sa rubom}
\gamma^{-1}(U_i) \cap \tilde{B}_i = \emptyset.
\end{equation}
Also $V_i \cap A_i = \emptyset$ implies
\begin{equation}
\label{eq: eq7 propozicija A_i, A_n. B_i prije teoream sa rubom}
\gamma^{-1}(V_i) \cap \tilde{A}_i = \emptyset.
\end{equation}

Since $\gamma$ is continuous, the sets $\gamma^{-1}(U_1),
\ldots, \gamma^{-1} (U_n), \gamma^{-1}(V_1),
\ldots, \gamma^{-1}(V_n)$ are open in $\mathbb{R}^n$.
This, together with (\ref{eq: eq4 propozicija A_i, A_n. B_i prije
teoream sa rubom}), (\ref{eq: eq5 propozicija A_i, A_n. B_i prije
teoream sa rubom}), (\ref{eq: eq6 propozicija A_i, A_n. B_i prije
teoream sa rubom}) and (\ref{eq: eq7 propozicija A_i, A_n. B_i
prije teoream sa rubom}) contradicts Theorem
\ref{pokrivanje-kvadrata}.
\end{proof}

\begin{theorem}
\label{lokalno-eukl-rub} Let $(X, \mathcal{T}, (I_i))$ be a
computable topological space. Let $S$ and $T$ be semicomputable
sets in this space such that  $T \subseteq S$ and let $x \in S$.
Let us suppose that there exists a neighborhood $N$ of $x$ in $S$
and a homeomorphism $f \colon \mathbb{H}^n \to N$ (for some $n \in
\mathbb{N}$) such that $$x \in f(\Bd \mathbb{H}^n)~\mbox{ and
}~f(\Bd \mathbb{H}^n) = N \cap T.$$ Then $S$ is c.e.\ at $x$.
\end{theorem}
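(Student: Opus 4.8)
The plan is to follow the proof of Theorem~\ref{lokalno-eukl} almost verbatim, replacing the covering obstruction Theorem~\ref{pokrivanje-kvadrata} by its half--cube analogue Lemma~\ref{prop: A_i, A_n. B_i prije teoream sa rubom}, and using the semicomputability of $T$ to keep track of the boundary face $\Bd\mathbb{H}^n$, which in the $n$-th coordinate takes over the role played by the two opposite faces in the interior case.

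As in Theorem~\ref{lokalno-eukl} one first reduces to the case that $N$ is open in $S$ and $f(0)=x$; the usual shrinking of $N$ can be arranged so that $f(\Bd\mathbb{H}^n)=N\cap T$ still holds. Write $Q_r=[-r,r]^{n-1}\times[0,r]\subseteq\mathbb{H}^n$ and let $\mathring Q_r=\langle-r,r\rangle^{n-1}\times[0,r\rangle$ be its interior in $\mathbb{H}^n$. Since $f(\mathring Q_4)$ is open in $S$, the set $S\setminus f(\mathring Q_4)$ is compact and disjoint from the compact set $f(Q_2)$, so by Lemma~\ref{K-L-disj} there is $m_0$ with $S\setminus f(\mathring Q_4)\subseteq J_{m_0}$ and $J_{m_0}\cap f(Q_2)=\emptyset$ (if $S\setminus f(\mathring Q_4)=\emptyset$ this step is vacuous, with $S'$ below taken to be $S$). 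Put $S'=S\setminus J_{m_0}$ and $T'=T\setminus J_{m_0}$; both are semicomputable by Lemma~\ref{S-minus-J}(i), one has $f(Q_2)\subseteq S'\subseteq f(Q_4)$, and $T'\subseteq f\bigl(\langle-4,4\rangle^{n-1}\times\{0\}\bigr)$ while $f(A_n^{0})\subseteq T'$, where (following Lemma~\ref{prop: A_i, A_n. B_i prije teoream sa rubom}) $B_i^{0}=\{x\in Q_2\mid x_i=2\}$ for $i\le n$, $A_i^{0}=\{x\in Q_2\mid x_i=-2\}$ for $i<n$, and $A_n^{0}=\{x\in Q_2\mid x_n=0\}$. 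Using Lemma~\ref{K-L-disj} fix, for $i\le n$, an index $c_i$ with $f(\{x\in Q_4\mid x_i\le1\})\subseteq J_{c_i}$ and $J_{c_i}\cap f(B_i^{0})=\emptyset$, and for $i<n$ an index $d_i$ with $f(\{x\in Q_4\mid x_i\ge-1\})\subseteq J_{d_i}$ and $J_{d_i}\cap f(A_i^{0})=\emptyset$; also fix a computable $\varphi$ with $I_i=J_{\varphi(i)}$.

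Now fix $l$ with $I_l\cap f(Q_1)\neq\emptyset$. Exactly as in Theorem~\ref{lokalno-eukl}, choose $v=(v_1,\dots,v_n)\in Q_1$ with $f(v)\in I_l$ and $\epsilon\in\langle0,1\rangle$ so that $E=\prod_{i<n}[v_i-\epsilon,v_i+\epsilon]\times[\max\{0,v_n-\epsilon\},\,v_n+\epsilon]$ satisfies $f(E)\subseteq I_l$, and cut $Q_4$ into $\tilde A_i=\{x\in Q_4\mid x_i\le v_i-\epsilon\}$, $\tilde B_i=\{x\in Q_4\mid x_i\ge v_i+\epsilon\}$ for $i<n$, $\tilde A_n=\{x\in Q_4\mid x_n\le\max\{0,v_n-\epsilon\}\}$, $\tilde B_n=\{x\in Q_4\mid x_n\ge v_n+\epsilon\}$; then $Q_4$ is the union of all $\tilde A_i$, all $\tilde B_i$ and $E$, one has $\tilde A_i\subseteq\{x\in Q_4\mid x_i\le1\}$, $\tilde B_i\subseteq\{x\in Q_4\mid x_i\ge-1\}$ for $i<n$, and $\tilde A_i\cap\tilde B_i=\emptyset$. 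The one new point is the $n$-th upper wall: since $\tilde B_n\subseteq\{x_n>0\}$ and $f^{-1}(T')\subseteq\{x_n=0\}$, the compacta $f(\tilde B_n)$ and $T'$ are disjoint, so Lemma~\ref{K-L-disj} yields $\beta_n,g$ with $f(\tilde B_n)\subseteq J_{\beta_n}$, $T'\subseteq J_g$ and $J_{\beta_n}\diamond_{\mathcal D}J_g$. Applying Theorem~\ref{separacija-komp} to the family $\{f(\tilde A_i)\}_{i\le n}\cup\{f(\tilde B_i)\}_{i\le n}\cup\{f(E)\}$ and the finite set $A=\{c_1,\dots,c_n,d_1,\dots,d_{n-1},\beta_n,\varphi(l)\}$ produces $a_i=i_{f(\tilde A_i)}$, $b_i=i_{f(\tilde B_i)}$, $e=i_{f(E)}$ with $J_{a_i}\subseteq_{\mathcal C}J_{c_i}$ for $i\le n$, $J_{b_i}\subseteq_{\mathcal C}J_{d_i}$ for $i<n$, $J_{b_n}\subseteq_{\mathcal C}J_{\beta_n}$ (hence $J_{b_n}\diamond_{\mathcal D}J_g$ by Lemma~\ref{(7)-J-J}), $J_e\subseteq_{\mathcal C}J_{\varphi(l)}$, $J_{a_i}\diamond_{\mathcal D}J_{b_i}$, and $S'\subseteq J_{a_1}\cup\dots\cup J_{a_n}\cup J_{b_1}\cup\dots\cup J_{b_n}\cup J_e$ (using $S'\subseteq f(Q_4)$).

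Finally, let $\Gamma$ be the set of all tuples $(l,a_1,\dots,a_n,b_1,\dots,b_n,e,g)$ satisfying $J_{a_i}\subseteq_{\mathcal C}J_{c_i}$ ($i\le n$), $J_{b_i}\subseteq_{\mathcal C}J_{d_i}$ ($i<n$), $J_{b_n}\diamond_{\mathcal D}J_g$, $T'\subseteq J_g$, $J_e\subseteq_{\mathcal C}J_{\varphi(l)}$, $J_{a_i}\diamond_{\mathcal D}J_{b_i}$ ($i\le n$), and $S'\subseteq J_{a_1}\cup\dots\cup J_{a_n}\cup J_{b_1}\cup\dots\cup J_{b_n}\cup J_e$; by Proposition~\ref{c.e.-CD}, Lemma~\ref{S-minus-J}(ii) and the semicomputability of $T'$ all these conditions are c.e., so $\Gamma$ is c.e., hence so is $\Omega=\{l\mid(l,a_1,\dots,b_n,e,g)\in\Gamma\text{ for some }a_i,b_i,e,g\}$, and the previous paragraph shows $I_l\cap f(Q_1)\neq\emptyset\Rightarrow l\in\Omega$. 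Conversely, if $l\in\Omega$, fix a witnessing tuple; from $f(Q_2)\subseteq S'\subseteq\bigcup_i J_{a_i}\cup\bigcup_i J_{b_i}\cup J_e$ it follows that $Q_2$ is covered by the sets $f^{-1}(J_{a_i})$, $f^{-1}(J_{b_i})$, $f^{-1}(J_e)$, which are open in $\mathbb{H}^n$; the inclusions give $f^{-1}(J_{a_i})\cap B_i^{0}=\emptyset$ and $f^{-1}(J_{b_i})\cap A_i^{0}=\emptyset$ for $i<n$, while $J_{b_n}\cap J_g=\emptyset$ together with $f(A_n^{0})\subseteq T'\subseteq J_g$ gives $f^{-1}(J_{b_n})\cap A_n^{0}=\emptyset$, and $J_{a_i}\diamond_{\mathcal D}J_{b_i}$ gives $f^{-1}(J_{a_i})\cap f^{-1}(J_{b_i})=\emptyset$; by Lemma~\ref{prop: A_i, A_n. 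B_i prije teoream sa rubom} (with $U_i=f^{-1}(J_{a_i})$, $V_i=f^{-1}(J_{b_i})$) these open sets cannot cover $Q_2$, so $Q_2\cap f^{-1}(J_e)\neq\emptyset$, whence $I_l\cap f(Q_2)\neq\emptyset$ and $I_l\cap S\neq\emptyset$. This shows that the neighborhood $f(Q_1)$ of $x$ in $S$ is c.e.\ up to $S$, i.e.\ $S$ is c.e.\ at $x$. The main obstacle, and the only real departure from the proof of Theorem~\ref{lokalno-eukl}, is the asymmetry of the $n$-th coordinate: the lower wall $\{x_n=0\}$ is the manifold boundary and cannot be separated by one fixed rational open set from the thin layers $f(\tilde B_n)$ that approach it as $\epsilon\to0$; the remedy is to route that separation through the trimmed set $T'$, which is still semicomputable and contains $f(A_n^{0})$, at the cost of one extra existentially quantified index $g$ in the definition of $\Omega$.
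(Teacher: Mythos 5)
Your proof is correct and follows essentially the same route as the paper's: trim $S$ and $T$ by a common $J_{m_0}$, separate the upper slab $f(\tilde{B}_n)$ from the semicomputable set $T'$ (which contains the image of the bottom face) via one extra existentially quantified index, and close the argument with the half-cube covering lemma exactly as in the interior case. The only cosmetic deviation is that the paper picks $v$ with $v_n>0$ and $\epsilon<v_n$ so that the box $E$ stays off $\Bd\mathbb{H}^n$, whereas you truncate $E$ and $\tilde{A}_n$ at $x_n=0$; both variants work.
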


\begin{proof}
It is known that each open ball in $\mathbb{H}^n$ (with respect to
the Euclidean metric on $\mathbb{H}^n$) centered at a point in
$\Bd \mathbb{H}^n$ is homeomorphic to $\mathbb{H}^n$. Therefore,
we may assume that $N$ is an open neighborhood of $x$ in $S$.

We may also assume that $x = f(0, \ldots, 0)$.

As in the proof of Theorem \ref{lokalno-eukl} we conclude that the
set $S \setminus f(\langle -4,4 \rangle^{n-1} \times [ 0,4 \rangle
)$ is compact in $(X, \mathcal{T})$. The set $f([-2,2 ]^{n-1}
\times [ 0,2 ])$ is also compact in $(X, \mathcal{T})$. These two
sets are disjoint and Lemma \ref{K-L-disj} implies that there
exists  $m_0 \in \mathbb{N}$ such that
\begin{equation}
\label{eq: eq1 teorem sa RUBOM jednakosti sa Jm0} S \setminus
f(\langle -4,4 \rangle^{n-1} \times [ 0,4 \rangle) \subseteq
J_{m_0} \text{~ and ~} J_{m_0} \cap f([-2,2 ]^{n-1} \times [ 0,2
]) = \emptyset.
\end{equation}
Let
$$
S' = S \setminus J_{m_0},
$$
$$
T' = T \setminus J_{m_0}.
$$
By Lemma \ref{S-minus-J}(i) the sets $S'$ and $T'$ are
semicomputable. We have
\begin{equation}
\label{eq: eq11 teorem sa RUBOM} f([-2,2 ]^{n-1} \times [ 0,2 ])
\subseteq S' \subseteq f([-4,4 ]^{n-1} \times [ 0,4 ]).
\end{equation}

For $i \in \{ 1, \ldots, n-1 \}$ let
\begin{align*}
C_i &=
\left\{ (x_1, \ldots, x_n) \in [-4,4 ]^{n-1} \times [ 0,4 ] \mid x_i \leq 1 \right\},\\
D_i &=
\left\{ (x_1, \ldots, x_n) \in [-4,4 ]^{n-1} \times [ 0,4 ] \mid x_i \geq -1 \right\},\\
A_i &=
\left\{ (x_1, \ldots, x_n) \in [-2,2 ]^{n-1} \times [ 0,2 ] \mid x_i = -2 \right\},\\
B_i &= \left\{ (x_1, \ldots, x_n) \in [-2,2 ]^{n-1} \times [ 0,2 ]
\mid x_i = 2 \right\}
\end{align*}
and let
\begin{align*}
C_n &=
\left\{ (x_1, \ldots, x_n) \in [-4,4 ]^{n-1} \times [ 0,4 ] \mid x_n \leq 1 \right\},\\
B_n &= \left\{ (x_1, \ldots, x_n) \in [-2,2 ]^{n-1} \times [ 0,2 ]
\mid x_n = 2 \right\}.
\end{align*}
These sets are clearly compact in $\mathbb{H}^{n} $. Therefore,
the sets $f(A_i)$, $f(B_i)$, $f(C_i)$, $f(D_i)$, for $i \in \{ 1,
\ldots, n-1 \}$, and $f(C_n)$ and $f(B_n)$ are compact in $(X,
\mathcal{T})$. Moreover, for each $i \in \{ 1, \ldots, n-1 \}$ we
have $f(A_i) \cap f(D_i) = \emptyset$ and $f(B_i) \cap f(C_i) =
\emptyset$. Also $f(B_n) \cap f(C_n) = \emptyset$.

By Lemma  \ref{K-L-disj} for each $i \in \{ 1, \ldots, n-1 \}$
there exist $d_i$, $c_i \in \mathbb{N}$ such that
\begin{equation}
\label{eq: eq2 teorem sa RUBOM} f(C_i) \subseteq J_{c_i} \text{~
and ~} J_{c_i} \cap f(B_i) = \emptyset,
\end{equation}
\begin{equation}
\label{eq: eq3 teorem sa RUBOM} f(D_i) \subseteq J_{d_i} \text{~
and ~}J_{d_i} \cap f(A_i) = \emptyset
\end{equation}
and there exists $c_n \in \mathbb{N}$ such that
\begin{equation}
\label{eq: eq4 teorem sa RUBOM} f(C_n) \subseteq J_{c_n} \text{~
and ~} J_{c_n} \cap f(B_n) = \emptyset.
\end{equation}
Let $\varphi \colon \mathbb{N} \to \mathbb{N}$ be a computable
function such that $I_i = J_{\varphi(i)}$ for each $i \in
\mathbb{N}$.

Suppose $l \in \mathbb{N}$ is such that
\begin{equation}\label{Iisijece}
I_l \cap f \left( [-1,1]^{n-1} \times [0,1] \right) \neq
\emptyset.
\end{equation}
Then $$f^{-1}( I_l) \cap \left(  [-1,1]^{n-1} \times
[0,1]\right) \neq \emptyset$$ and since $f^{-1}(I_l)$ is
open in $\mathbb{H}^n$ we may choose $v \in [-1,1]^{n-1} \times
[0,1]$, $v=(v_1, \ldots, v_n)$, such that $v_{n} >0$ and $v \in
f^{-1}(I_l)$. The fact that $f^{-1}(I_l)$ is open
implies that there exists $\epsilon>0$ such that $\epsilon<v_{n} $
and
$$
[v_1 - \epsilon, v_1 + \epsilon] \times \ldots \times [v_n -
\epsilon, v_n + \epsilon] \subseteq f^{-1}(I_l).
$$
Let
$$
E = [v_1 - \epsilon, v_1 + \epsilon] \times \ldots \times [v_n -
\epsilon, v_n + \epsilon].
$$
So $E \subseteq f^{-1}(I_l)$ and $f(E) \subseteq I_l$.

For $i \in \{1, \ldots, n \}$ let
\begin{align*}
\tilde{A}_i = \left\{ (x_1, \ldots, x_n) \in [-4,4]^{n-1} \times [0,4] \mid x_i \leq v_i - \epsilon \right\},\\
\tilde{B}_i = \left\{ (x_1, \ldots, x_n) \in [-4,4]^{n-1} \times
[0,4] \mid x_i \geq v_i + \epsilon \right\}.
\end{align*}

\noindent For each $i \in \{ 1, \ldots, n-1 \}$ we have
\begin{equation}
\label{eq: eq8 teorem sa RUBOM} \tilde{A}_i \subseteq C_i,~~
\tilde{B}_i \subseteq D_i \text{~and~} \tilde{A}_n \subseteq C_n.
\end{equation}
Furthermore, we have
$$
\tilde{A}_1 \cup \tilde{B}_1 \cup \ldots \cup \tilde{A}_n \cup
\tilde{B}_n \cup E = [-4,4]^{n-1} \times [0,4]
$$
and so
\begin{equation}
\label{eq: eq10 teorem sa RUBOM} f(\tilde{A}_1) \cup
f(\tilde{B}_1) \cup \ldots \cup f(\tilde{A}_n) \cup f(\tilde{B}_n)
\cup f(E) = f \left( [-4,4]^{n-1} \times [0,4] \right).
\end{equation}
For each $i \in \{ 1, \ldots, n\}$ we have $f(\tilde{A}_i) \cap
f(\tilde{B}_i) = \emptyset$ since $\tilde{A}_i \cap \tilde{B}_i =
\emptyset$.

Let $i \in \{ 1, \ldots, n-1 \}$. It follows from (\ref{eq: eq8
teorem sa RUBOM}) that $f(\tilde{A}_i) \subseteq f(C_i)$ and
$f(\tilde{B}_i) \subseteq f(D_i)$ and so (\ref{eq: eq2 teorem sa
RUBOM}) and (\ref{eq: eq3 teorem sa RUBOM}) imply
$$
f(\tilde{A}_i) \subseteq J_{c_i} \text{~and~} f(\tilde{B}_i)
\subseteq J_{d_i}.
$$
By (\ref{eq: eq8 teorem sa RUBOM}) we have $f(\tilde{A}_n)
\subseteq f(C_n)$ and from (\ref{eq: eq4 teorem sa RUBOM}) we get
$f(\tilde{A}_n) \subseteq J_{c_n}$.

We have
\begin{align*}
f(\tilde{B}_n) \cap T' &=
\left( f(\tilde{B}_n) \cap N \right) \cap T'\\
&\subseteq
\left( f(\tilde{B}_n) \cap N \right) \cap T\\
&=
f(\tilde{B}_n) \cap \left( N \cap T \right)\\
&=
f(\tilde{B}_n) \cap f(\Bd \mathbb{H}^n)\\
&=
f(\tilde{B}_n \cap \Bd \mathbb{H}^n)\\
&=
f(\emptyset)\\
&= \emptyset.
\end{align*}
Hence
\begin{equation}
\label{eq: eq9 teorem sa RUBOM} f(\tilde{B}_n) \cap T' =
\emptyset.
\end{equation}

\noindent Let
$$
A_n = \left\{ (x_1, \ldots, x_n) \in [-2,2]^{n-1} \times [0,2]
\mid x_n = 0 \right\}.
$$
Since $A_n \subseteq \Bd \mathbb{H}^n$, we have
$$
f(A_n) \subseteq f(\Bd \mathbb{H}^n) = N \cap T.
$$
So $f(A_n) \subseteq T$. Furthermore   $f(A_n) \cap J_{m_0} =
\emptyset$ by (\ref{eq: eq1 teorem sa RUBOM jednakosti sa Jm0}).
Therefore $f(A_n) \subseteq T \setminus J_{m_0}$, i.e.
\begin{equation}
\label{eq: eq15 teorem sa RUBOM} f(A_n) \subseteq T'.
\end{equation}
In particular, $T'$ is a nonempty set. It follows from (\ref{eq:
eq9 teorem sa RUBOM}) and Lemma \ref{K-L-disj} that there exist
$d_n, t \in \mathbb{N}$ such that $f(\tilde{B}_n) \subseteq
J_{d_n}$, $T' \subseteq J_t$ and such that
$J_{d_n}\diamond_{\mathcal{D}}J_t$.

The sets $\tilde{A}_1, \ldots, \tilde{A}_n,\tilde{B}_1, \ldots,
\tilde{B}_n, E$ are nonempty and compact in $\mathbb{H}^n$.
Consequently, the sets $f(\tilde{A}_1), \ldots,
f(\tilde{A}_n),f(\tilde{B}_1), \ldots, f(\tilde{B}_n), f(E)$ are
nonempty and compact in $(X, \mathcal{T})$. Since $f(E) \subseteq
I_l$, we have $f(E) \subseteq J_{\varphi(l)}$.

By Theorem \ref{separacija-komp} there exist $a_1, \ldots,
a_n,b_1, \ldots, b_{n}, e \in \mathbb{N}$ such that for each $i
\in \{ 1, \ldots, n \}$ the following holds:
\begin{align*}
& f(\tilde{A}_i) \subseteq J_{a_i},~~ f(\tilde{B}_i) \subseteq J_{b_i},~~ f(E) \subseteq J_e, \\
& J_{a_i} \subseteq_{\mathcal{C}} J_{c_i},~~ J_{b_i}
\subseteq_{\mathcal{C}} J_{d_i},~~ J_e \subseteq_{\mathcal{C}}
J_{\varphi(l)} \text{~ and ~} J_{a_i}\diamond_{\mathcal{D}}
J_{b_i}.
\end{align*}
Since $J_{b_n} \subseteq_{\mathcal{C}} J_{d_n}$ and
$J_{d_n}\diamond_{\mathcal{D}}J_t$, we have
$J_{b_n}\diamond_{\mathcal{D}}J_t$.

It follows from (\ref{eq: eq11 teorem sa RUBOM}) and (\ref{eq:
eq10 teorem sa RUBOM}) that
\begin{align*}
S' &\subseteq
f(\tilde{A}_1) \cup f(\tilde{B}_1) \cup \ldots \cup f(\tilde{A}_n) \cup f(\tilde{B}_n) \cup f(E)\\
&\subseteq J_{a_1} \cup J_{b_1} \cup \ldots \cup J_{a_n} \cup
J_{b_n} \cup J_{e},
\end{align*}
hence
$$
S' \subseteq J_{a_1} \cup J_{b_1} \cup \ldots \cup J_{a_n} \cup
J_{b_n} \cup J_{e}.
$$

Let us summarize. If $l \in \mathbb{N}$ is such (\ref{Iisijece})
holds, then there exist $a_1, \ldots, a_n$, $b_1, \ldots, b_n$,
$e$, $t \in \mathbb{N}$ such that
\begin{enumerate}
\item[1)] $J_{a_i} \subseteq_{\mathcal{C}} J_{c_i} \mbox{ for
each } i \in \{ 1, \ldots, n \}$
\item[2)] $J_{b_i} \subseteq_{\mathcal{C}}
J_{d_i} \mbox{ for each } i \in \{1, \ldots, n-1 \}$
\item[3)] $J_e \subseteq_{\mathcal{C}} J_{\varphi(l)}$
\item[4)] $J_{a_i}\diamond_{\mathcal{D}}J_{b_i} \mbox{ for each } i \in \{ 1, \ldots, n \}$
\item[5)] $T' \subseteq J_t$
\item[6)] $J_{b_n}\diamond_{\mathcal{D}}J_t$
\item[7)] $S' \subseteq J_{a_1} \cup J_{b_1} \cup \ldots \cup
J_{a_n} \cup J_{b_n} \cup J_{e}$.
\end{enumerate}

Let $\Gamma$ be the set of all $(l,a_1, \ldots, a_n,b_1, \ldots,
b_n,e,t) \in \mathbb{N}^{2n+3}$ such that 1) - 7) hold.
Furthermore, let $\Omega$ be the set of all $l \in \mathbb{N}$ for
which there exist $a_1, \ldots, a_n$, $b_1, \ldots, b_n$, $e$, $t
\in \mathbb{N}$ such that $(l,a_1,\ldots,a_n$, $b_1,\ldots,b_n$,
$e$, $t) \in \Gamma$. We have proved the following: $$\mbox{if }l
\in \mathbb{N}\mbox{ is such that }
 I_l \cap f([-1,1]^{n-1} \times [0,1]) \neq \emptyset,\mbox{ then
 }l \in \Omega.$$

Suppose now that $l \in \Omega$. Let us prove that
\begin{equation}\label{IlsijeceS}
I_l \cap S \neq \emptyset.
\end{equation}
Since $l \in \Omega$, there exist $a_1, \ldots, a_n$, $b_1,
\ldots, b_n$, $e$, $t \in \mathbb{N}$ such that $(l,a_1, \ldots,
a_n$, $b_1, \ldots, b_n$, $e$, $t) \in \Gamma$. So, for the
numbers $l,a_1, \ldots, a_n$, $b_1, \ldots, b_n$, $e$, $t$ the
statements 1) - 7) hold. By  (\ref{eq: eq11 teorem sa RUBOM}) and
7) we have
$$
f([-2,2]^{n-1} \times [0,2]) \subseteq J_{a_1} \cup J_{b_1} \cup
\ldots \cup J_{a_n} \cup J_{b_n} \cup J_{e}
$$
and it follows
\begin{equation}
\label{eq: eq12 teorem sa RUBOM} [-2,2]^{n-1} \times [0,2]
\subseteq f^{-1}(J_{a_1}) \cup f^{-1}(J_{b_1})
\cup \ldots \cup f^{-1}(J_{a_n}) \cup
f^{-1}(J_{b_n})\cup f^{-1}(J_e).
\end{equation}

Let $i \in \{1, \ldots, n \}$. Then $J_{a_i} \subseteq J_{c_i}$ by
1) and it follows from (\ref{eq: eq2 teorem sa RUBOM}) and
(\ref{eq: eq4 teorem sa RUBOM}) that $J_{a_i} \cap f(B_i) =
\emptyset$. Therefore
\begin{equation}
\label{eq: eq13 teorem sa RUBOM} f^{-1}(J_{a_i}) \cap B_i
= \emptyset.
\end{equation}

Let $i \in \{1, \ldots, n-1 \}$. It follows from 2) and (\ref{eq:
eq3 teorem sa RUBOM}) that $J_{b_i} \cap f(A_i) = \emptyset$ which
gives
\begin{equation}
\label{eq: eq14 teorem sa RUBOM} f^{-1}(J_{b_i}) \cap A_i
= \emptyset.
\end{equation}
By (\ref{eq: eq15 teorem sa RUBOM}), 5) and 6) we have $J_{b_n}
\cap f(A_n) = \emptyset$. Thus
\begin{equation*}
f^{-1}(J_{b_n}) \cap A_n = \emptyset.
\end{equation*}
Statement 4) implies that
\begin{equation}
\label{eq: eq16 teorem sa RUBOM} f^{-1}(J_{a_i}) \cap
f^{-1}(J_{b_i}) = \emptyset
\end{equation}
for each $i \in \{ 1, \ldots, n \}$. The sets
$f^{-1}(J_{a_1}), \ldots, f^{-1}(J_{a_n}),
f^{-1}(J_{b_1}), \ldots, f^{-1}(J_{b_n})$ are
clearly open in $\mathbb{H}^n$. From (\ref{eq: eq13 teorem sa
RUBOM}), (\ref{eq: eq14 teorem sa RUBOM}), (\ref{eq: eq16 teorem
sa RUBOM}) and Lemma \ref{prop: A_i, A_n. B_i prije teoream sa
rubom} we conclude that
$$
[-2,2]^{n-1} \times [0,2] \nsubseteq f^{-1}(J_{a_1}) \cup
\ldots \cup f^{-1}(J_{a_n}) \cup f^{-1}(J_{b_1})
\cup \ldots \cup f^{-1}(J_{b_n}).
$$
From this and (\ref{eq: eq12 teorem sa RUBOM}) we get
$$
\big( [-2,2]^{n-1} \times [0,2] \big) \cap f^{-1}(J_e)
\neq \emptyset.
$$
\noindent
Hence $J_e \cap f([-2,2]^{n-1} \times [0,2]) \neq \emptyset$
which, together with  3) and $J_{\varphi(l)} = I_l$, gives
 $$I_l \cap f([-2,2]^{n-1} \times [0,2]) \neq \emptyset.$$
This clearly implies (\ref{IlsijeceS}).

We have proved that for each  $l \in \mathbb{N}$ the following
implications hold:
\begin{enumerate}
\item[$i$)] $I_l \cap f([-1,1]^{n-1} \times [0,1]) \neq \emptyset
\Rightarrow l \in \Omega$ \item[$ii$)] $l \in \Omega \Rightarrow
I_l \cap S \neq \emptyset$.
\end{enumerate}
It is easy to conclude that $\Omega $ is a c.e.\ set. So, by  $i$)
and $ii$), the set $f([ -1,1 ]^{n-1} \times [0,1])$ is c.e.\ up to
$S$. Clearly $f([ -1,1 ]^{n-1} \times [0,1])$ is a neighborhood of
$x$ in $S$.  Hence $S$ is c.e.\ at $x$.
\end{proof}

The following theorem is a generalization of Theorem
\ref{manifold-tm}.

\begin{theorem}\label{manifold-bound-tm}
\label{thm: S n-mnog s rubom i rub od S je poluizrac skup -- tada
je S izracunljiv skup} Let $(X,\mathcal{T}, (I_i))$ be a
computable topological space, let $n \in \mathbb{N}\setminus \{ 0
\}$ and let $S$ be a semicomputable set in $(X,\mathcal{T},(I_i))$
which has the following property: $S$ is, as a subspace of
$(X,\mathcal{T})$, an $n$--manifold with boundary and $\partial S$
is a semicomputable set in $(X, \mathcal{T}, (I_i))$. Then $S$ is
a computable set.
\end{theorem}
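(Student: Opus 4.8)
The plan is to prove that $S$ is computably enumerable; since $S$ is semicomputable by hypothesis, it is then computable by the definition of a computable set in a computable topological space. As a semicomputable set $S$ is compact, so by Proposition \ref{comp-loc-ce-comp} it suffices to show that $S$ is locally c.e., i.e.\ that $S$ is c.e.\ at every $x\in S$. Fix $x\in S$ and split into the cases $x\notin\partial S$ and $x\in\partial S$.

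Suppose first $x\notin\partial S$. By the definition of an $n$-manifold with boundary, $x$ has a neighborhood $N$ in $S$ that is either homeomorphic to $\mathbb{R}^n$ (type (1)) or admits a homeomorphism $f\colon\mathbb{H}^n\to N$ with $x\in f(\Bd\mathbb{H}^n)$ (type (2)); the latter would mean $x\in\partial S$ by the definition of the boundary, so the former holds. Hence $x$ has a neighborhood in $S$ homeomorphic to $\mathbb{R}^n$, and Theorem \ref{lokalno-eukl} (applied to the semicomputable set $S$) shows $S$ is c.e.\ at $x$.

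Suppose now $x\in\partial S$ and put $T=\partial S$, so that $T\subseteq S$ and $T$ is semicomputable by hypothesis. I want to apply Theorem \ref{lokalno-eukl-rub}, for which I must produce a neighborhood $N$ of $x$ in $S$ and a homeomorphism $f\colon\mathbb{H}^n\to N$ with $x\in f(\Bd\mathbb{H}^n)$ and $f(\Bd\mathbb{H}^n)=N\cap T$. Starting from a boundary chart around $x$ given by the manifold-with-boundary structure and restricting it to the image of a sufficiently small half-ball centered at the preimage of $x$ (such a half-ball being homeomorphic to $\mathbb{H}^n$, as noted in the proof of Theorem \ref{lokalno-eukl-rub}), I may assume $N$ is open in $S$. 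The equality $f(\Bd\mathbb{H}^n)=N\cap\partial S$ is then checked pointwise: if $y\in f(\Bd\mathbb{H}^n)$, then the image under $f$ of a small half-ball about $f^{-1}(y)$ is a type (2) neighborhood of $y$, so $y\in\partial S$; conversely, if $y\in N\cap\partial S$ but $y=f(z)$ with $z$ an interior point of $\mathbb{H}^n$, then the image under $f$ of a small open ball about $z$ is a type (1) neighborhood of $y$, and since a point of a manifold with boundary cannot have both a type (1) and a type (2) neighborhood (\cite{mu}), this contradicts $y\in\partial S$; hence $y\in f(\Bd\mathbb{H}^n)$. Now Theorem \ref{lokalno-eukl-rub}, with this $S$, $T$, $N$ and $f$, yields that $S$ is c.e.\ at $x$.

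In both cases $S$ is c.e.\ at $x$; as $x\in S$ was arbitrary, $S$ is locally c.e., hence c.e.\ by Proposition \ref{comp-loc-ce-comp}, and therefore computable. The only step that is not purely formal is the verification, within a boundary chart, that $f(\Bd\mathbb{H}^n)$ coincides with $N\cap\partial S$ (together with the reduction to an open chart): this is classical manifold topology, but it has to be carried out carefully since it is precisely the hypothesis required by Theorem \ref{lokalno-eukl-rub}.
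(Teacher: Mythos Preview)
Your proof is correct and follows essentially the same approach as the paper: reduce to local computable enumerability via Proposition \ref{comp-loc-ce-comp}, then invoke Theorem \ref{lokalno-eukl} at interior points and Theorem \ref{lokalno-eukl-rub} at boundary points. The only difference is that you spell out the verification of $f(\Bd\mathbb{H}^n)=N\cap\partial S$ explicitly, whereas the paper simply cites \cite{lmcs:mnf} for this standard fact.
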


\begin{proof}
Since $S$ is compact, it suffices to prove that $S$ is locally
c.e.

Let $x \in S$. Then one of the following holds:
\begin{enumerate}
\item[1)] There exists a neighborhood of $x$ in $S$ which is
homeomorphic to $\mathbb{R}^n$. \item[2)] There exists a
neighborhood $N$ of $x$ in $S$ and a homeomorphism $f \colon
\mathbb{H}^n \to N$ such that
 $x \in f(\Bd \mathbb{H}^n)$.
\end{enumerate}
If 1) holds, then $S$ is c.e.\ at $x$ by Theorem \ref{lokalno-eukl}.

Suppose that 2) holds. We may assume that $N$ is an open
neighborhood of $x$. It is easy to conclude (see the proof of
Theorem 6.1 in \cite{lmcs:mnf}) that $$f(\Bd \mathbb{H}^n) = N
\cap
\partial S.$$
Now Theorem \ref{lokalno-eukl-rub} implies that $S$ is c.e.\ at
$x$.

So $S$ is locally c.e.\ and the claim of the theorem follows.
\end{proof}

\section{Compactification and semicomputability} \label{sect-cas}

If $(X,d)$ is a metric space, for $x\in X$ and $r>0$ by
$\hat{B}(x,r)$ we denote the closed ball in $(X,d)$ of radius $r$
centered in $x$, i.e.\ $\hat{B}(x,r)=\{y\in X\mid d(y,x)\leq r\}$.

Let $(X,d,\alpha )$ be a computable metric space. If $p\in
\mathbb{N}$ and $r$ is a positive rational number, then we say
that $\hat{B}(\alpha _{p} ,r)$ is a \textbf{rational closed ball}
in $(X,d,\alpha )$. For $i\in \mathbb{N}$ we define $$\hat{I}_{i}
=\hat{B}(\lambda _{i} ,\rho _{i} )$$ (recall the definition
(\ref{defIi})). Then $\{\hat{I}_{i}\mid i\in \mathbb{N}\}$ is the
family of all rational closed balls in $(X,d,\alpha )$.

Semicomputable (compact) sets in a computable metric space can be
characterized in the following way (see Proposition 3.1 in
\cite{lmcs:1mnf}).
\begin{proposition} \label{semi-semic.c.b}
Let $(X,d,\alpha )$ be a computable metric space and let $S$ be a
compact set in $(X,d)$. Then $S$ is semicomputable in $(X,d,\alpha
)$ if and only if $S\cap B$ is a compact set for each closed ball
$B$ in $(X,d)$ and the set $\{(i,j)\in \mathbb{N}^{2}\mid
\hat{I}_{i} \cap S\subseteq J_{j} \}$ is c.e.
\end{proposition}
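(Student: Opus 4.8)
The plan is to prove the two directions separately. Note first that, since $S$ is compact and $(X,d)$ is a metric (hence Hausdorff) space in which every closed ball is closed, the set $S\cap B$ is closed in $S$ and therefore compact for every closed ball $B$; so that clause of the equivalence is automatic, and the real content is: $S$ is semicomputable if and only if the set $\mathcal{A}=\{(i,j)\in\mathbb{N}^{2}\mid \hat{I}_{i}\cap S\subseteq J_{j}\}$ is c.e. We may assume $S\neq\emptyset$, the empty case being trivial.

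Assume first that $S$ is semicomputable, i.e.\ $\{j\mid S\subseteq J_{j}\}$ is c.e. I would record two simple facts: there is a computable $\varphi$ with $J_{a}\cup J_{b}=J_{\varphi(a,b)}$ (via the c.f.v.\ map $(a,b)\mapsto[a]\cup[b]$ and Proposition \ref{cfv}), so that $\{(j,k)\mid S\subseteq J_{j}\cup J_{k}\}$ is c.e.; and if $I_{l}\diamond I_{i}$ then $I_{l}\cap\hat{I}_{i}=\emptyset$, immediately from the triangle inequality. The key step is the characterization: $\hat{I}_{i}\cap S\subseteq J_{j}$ holds if and only if either $S\subseteq J_{j}$, or there is $k$ with $S\subseteq J_{j}\cup J_{k}$ and $I_{l}\diamond I_{i}$ for every $l\in[k]$. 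The ``if'' direction is immediate from the second fact above (then $J_{k}\cap\hat{I}_{i}=\emptyset$, so $\hat{I}_{i}\cap S\subseteq\hat{I}_{i}\cap(J_{j}\cup J_{k})=\hat{I}_{i}\cap J_{j}\subseteq J_{j}$). For the ``only if'' direction, if $\hat{I}_{i}\cap S\subseteq J_{j}$ and $S\not\subseteq J_{j}$, then $S\setminus J_{j}$ is nonempty, compact and disjoint from $\hat{I}_{i}$; hence every $x\in S\setminus J_{j}$ satisfies $d(x,\lambda_{i})>\rho_{i}$, so a sufficiently small rational ball around an $\alpha$-point close to $x$ gives an index $l_{x}$ with $x\in I_{l_{x}}$ and $I_{l_{x}}\diamond I_{i}$; a finite subcover of $S\setminus J_{j}$ by such balls produces the required $k$. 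Finally, this characterization presents $\mathcal{A}$ as a c.e.\ set: the relation ``$I_{l}\diamond I_{i}$ for all $l\in[k]$'' defines a c.e.\ subset of $\mathbb{N}^{2}$ by Proposition \ref{cfv}(5) (applied to the c.f.v.\ map $(k,i)\mapsto[k]\times\{i\}$ and the c.e.\ set $\{(l,i)\mid I_{l}\diamond I_{i}\}$ from Proposition \ref{svojstva-diamond-F}), and $\mathcal{A}$ is then obtained from c.e.\ sets by a projection over $k$ together with a union with the c.e.\ set $\{(i,j)\mid S\subseteq J_{j}\}$.

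Conversely, assume $\mathcal{A}$ is c.e. Since $S$ is compact in a metric space it is bounded, so picking $s_{0}\in S$ and $p$ with $d(s_{0},\alpha_{p})<1$ and a rational $R$ larger than $1$ plus the diameter of $S$, we get $S\subseteq\hat{B}(\alpha_{p},R)$. As the sequences $(\lambda_{i})$ and $(\rho_{i})$ realize every pair consisting of an $\alpha$-point and a positive rational, there is $i_{0}$ with $\hat{I}_{i_{0}}=\hat{B}(\alpha_{p},R)$, and hence $\hat{I}_{i_{0}}\cap S=S$. Therefore $S\subseteq J_{m}\Leftrightarrow\hat{I}_{i_{0}}\cap S\subseteq J_{m}\Leftrightarrow (i_{0},m)\in\mathcal{A}$ for every $m$, so $\{m\mid S\subseteq J_{m}\}$ is a section of the c.e.\ set $\mathcal{A}$ and thus c.e.; that is, $S$ is semicomputable.

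The only genuinely delicate point is the characterization used in the first direction: extracting from the bare inclusion $\hat{I}_{i}\cap S\subseteq J_{j}$ an explicit rational open cover that witnesses it through relations already known to be c.e. Everything else is either a triangle-inequality estimate or an application of the closure properties of c.e.\ and c.f.v.\ objects established in Section \ref{sect-prelim}.
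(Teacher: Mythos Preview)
Your argument is correct. The paper does not actually prove this proposition; it only cites Proposition~3.1 of \cite{lmcs:1mnf}. Your characterization --- that $\hat{I}_{i}\cap S\subseteq J_{j}$ holds iff $S\subseteq J_{j}$ or there exists $k$ with $S\subseteq J_{j}\cup J_{k}$ and $I_{l}\diamond I_{i}$ for all $l\in[k]$ --- together with the compactness step that produces such a $k$, is exactly the right idea, and in fact the paper later reuses the same pattern in the proof of Proposition~\ref{S-L-J} (with $L_{l}$ in place of $\hat{I}_{i}$). The converse via a single fixed $i_{0}$ with $S\subseteq\hat{I}_{i_{0}}$ is also fine.
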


Using this proposition, we extend the notion of a semicomputable
set in a computable metric space to noncompact sets.

Let $(X,d,\alpha )$ be a computable metric space and let $S$ be a
subset of $X$ (possibly noncompact). We say that $S$ is
\textbf{semicomputable} in $(X,d,\alpha )$ if the following holds
(see the definition of a semi-c.c.b.\ set in \cite{lmcs:1mnf}):
\begin{enumerate}
\item[(i)] $S\cap B$ is a compact set for each closed ball $B$ in
$(X,d)$; \item[(ii)] the set $\{(i,j)\in \mathbb{N}^{2}\mid
\hat{I}_{i} \cap S\subseteq J_{j} \}$ is c.e.
\end{enumerate}
For a compact set $S$ this definition, by Proposition
\ref{semi-semic.c.b}, coincides with the earlier definition of a
semicomputable set.

Condition (i) easily implies that each semicomputable set is
closed.

In view of equivalence (\ref{compset-ce-semi}) we extend the
notion of a computable set. If $(X,d,\alpha )$ is a computable
metric space and $S\subseteq X$, then we say that $S$ is
\textbf{computable} if $S$ is c.e.\ and semicomputable.

As before, we have that each computable set is a computable closed set (recall the definition of a computable closed set from Section \ref{sect-prelim}).
In computable metric spaces which have the effective covering property and compact closed balls, the notions ``computable set'' and ``computable closed set'' coincide \cite{lmcs:1mnf}.

Now it makes sense to ask does the implication
\begin{equation}\label{impl-S-nonc}
S\mbox{ semicomputable }\implies S\mbox{ computable}
\end{equation}
 hold for
noncompact manifolds $S$ (in a computable metric space)? In
general, the answer is negative. It is not hard to construct a
semicomputable $1$-manifold in $\mathbb{R}^{2}$ which is not
computable (see \cite{lmcs:1mnf}). On the other hand, if $S$ is a
$1$-manifold such that $S$ has finitely many connected components,
then (\ref{impl-S-nonc}) holds \cite{lmcs:1mnf}.

A general idea how to deal with the case when $S$ is noncompact
could be to apply certain construction which changes the ambient
space and which turns $S$ into a compact set (keeping the
semicomputability of $S$). This construction, which is similar to
a compactification of a space, leads to a new ambient space which
is not a metric space, but a topological space and this is where
the concept of a computable topological space will be applied.

Let us recall the notion of a one-point compactification. Suppose
$(X,\mathcal{T})$ is a topological space and $Y=X\cup \{\infty\}$,
where $\infty\notin X$. Let $$\mathcal{S} = \mathcal{T} \cup
\left\{\{\infty\} \cup U \mid U \in \mathcal{T} \mbox{ and } X
\setminus U \mbox{ is compact in } (X, \mathcal{T}) \right\}.$$
Then $(Y,\mathcal{S})$ is a compact topological space called a
one-point compactification of $(X,\mathcal{T})$.

Following the idea from this definition, we are going to use the
following construction. Suppose $(X,d,\alpha)$ is a computable
metric space and $Y = X \cup \{\infty\}$, where $\infty \notin X$.
Let
\begin{equation}\label{pseudo-top}
\mathcal{S} = \mathcal{T}_d \cup \left\{\{\infty\} \cup U \mid U
\mbox{ is open in } (X,d) \mbox{ and } X \setminus U \mbox{ is
bounded in } (X,d) \right\}.
\end{equation}
It is straightforward to check that $(Y,\mathcal{S})$ is a
topological space  and that $(X,\mathcal{T}_{d})$ is a subspace of
$(Y,\mathcal{S})$. For $i \in \mathbb{N}$ let
$$
B_i =
\begin{cases}
I_{\frac{i}{2}}, & \text{if } i \in 2\mathbb{N}\\
\{ \infty \} \cup \left( X \setminus \hat{I}_{\frac{i-1}{2}}
\right), & \text{if } i \in 2\mathbb{N} + 1.
\end{cases}
$$
We say that the triple $(Y,\mathcal{S},(B_{i} )_{i\in
\mathbb{N}})$ is a \textbf{pseudocompactification} of the
computable metric space $(X,d,\alpha)$.

We claim that $(Y,\mathcal{S},(B_{i} )_{i\in \mathbb{N}})$ is a
computable topological space. First, we have the following lemma.

\begin{lemma} \label{dodatno-F} Let $(X,d,\alpha)$
be a computable metric space.
\begin{enumerate}
\item[(i)] Let $x \in X$ and $i \in \mathbb{N}$ be such that $x
\notin \hat{I}_i$. Then there exists $j \in \mathbb{N}$ such that
$x \in I_j$ and $I_j\diamond I_i$. \item[(ii)] Let $i,j \in
\mathbb{N}$. Then there exists $k \in \mathbb{N}$ such that
$I_i\subseteq _{F}I_k$ and $I_j\subseteq _{F} I_k$.
\end{enumerate}
\end{lemma}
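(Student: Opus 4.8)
The plan is to mimic the technique used in the proof of Proposition~\ref{svojstva-diamond-F}: recall that for every $u,v\in\mathbb{N}$ there is a $k\in\mathbb{N}$ with $(\lambda_k,\rho_k)=(\alpha_u,q_v)$ (since $\{(\tau_1(k),\tau_2(k))\mid k\in\mathbb{N}\}=\mathbb{N}^2$), that $\{\alpha_i\mid i\in\mathbb{N}\}$ is dense in $(X,d)$, and that $q$ enumerates all positive rationals. Each part is then an elementary manipulation of the defining inequalities $I_i\diamond I_j\Leftrightarrow d(\lambda_i,\lambda_j)>\rho_i+\rho_j$ and $I_i\subseteq_F I_j\Leftrightarrow d(\lambda_i,\lambda_j)+\rho_i<\rho_j$ via the triangle inequality.

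For (i), assume $x\notin\hat{I}_i$, i.e.\ $d(x,\lambda_i)>\rho_i$, and put $r=d(x,\lambda_i)-\rho_i>0$. First I would choose $v\in\mathbb{N}$ with $q_v<r/2$, and then, using density of $\alpha$, choose $u\in\mathbb{N}$ with $d(x,\alpha_u)<q_v$. Picking $j\in\mathbb{N}$ with $(\lambda_j,\rho_j)=(\alpha_u,q_v)$, we get $d(x,\lambda_j)=d(x,\alpha_u)<q_v=\rho_j$, so $x\in I_j$, and
$$
d(\lambda_j,\lambda_i)\ge d(x,\lambda_i)-d(x,\alpha_u)>(\rho_i+r)-q_v>\rho_i+q_v=\rho_i+\rho_j,
$$
where the last strict inequality holds because $q_v<r/2$ forces $r-q_v>q_v$. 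Hence $I_j\diamond I_i$.

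For (ii), the idea is simply that a formal superset of two given balls is obtained by keeping a fixed center and taking the radius large enough. I would put $\lambda_k=\lambda_i$ (which lies in the range of $\alpha$, namely $\lambda_i=\alpha_{\tau_1(i)}$) and pick a positive rational $\rho_k$ with $\rho_k>\rho_i$ and $\rho_k>d(\lambda_i,\lambda_j)+\rho_j$; then choose $k\in\mathbb{N}$ with $(\lambda_k,\rho_k)$ equal to that pair. Now $d(\lambda_i,\lambda_k)+\rho_i=\rho_i<\rho_k$, so $I_i\subseteq_F I_k$, and $d(\lambda_j,\lambda_k)+\rho_j=d(\lambda_j,\lambda_i)+\rho_j<\rho_k$, so $I_j\subseteq_F I_k$. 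There is no real obstacle here; the only points requiring care are that the chosen pairs $(\alpha_u,q_v)$ and $(\lambda_i,\rho_k)$ are genuinely realized among the $(\lambda_k,\rho_k)$, and that the strict inequalities are preserved, which the choices $q_v<r/2$ in (i) and $\rho_k$ strictly above both bounds in (ii) guarantee.
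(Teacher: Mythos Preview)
Your proposal is correct and follows essentially the same route as the paper: in (i) you pick a small rational radius around a nearby $\alpha_u$ and verify the formal-disjointness inequality via the triangle inequality, and in (ii) you take a large enough rational radius around a fixed $\alpha$-point to swallow both balls. The only cosmetic difference is that the paper allows an arbitrary center $\alpha_n$ in (ii), while you make the convenient choice $\alpha_n=\lambda_i$, which simplifies one of the two inequalities to $\rho_i<\rho_k$.
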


\begin{proof}
(i) Since $x \notin \hat{I}_i$, we have $\rho _{i}  < d(x,\lambda
_{i} )$. Choose a positive rational number $r$ such that $\rho
_{i} + 2r < d(x,\lambda _{i} )$ and  choose $k \in \mathbb{N}$ so
that
\begin{equation}
\label{dodatno-F-1} d(\alpha_k, x) < r.
\end{equation}
Then \begin{equation} \label{dodatno-F-2} d(\alpha_k, \lambda _{i}
)
> r + \rho _{i} .
\end{equation}
Indeed, if $d(\alpha_k, \lambda _{i} ) \leq r + \rho _{i} $, then
$$ d(x, \lambda _{i} ) \leq d(x,\alpha_k) + d(\alpha_k, \lambda
_{i} ) < r+r+\rho _{i}  = 2r + \rho _{i}  < d(x,\lambda _{i} ),$$
a contradiction.

Choose $l \in \mathbb{N}$ so that $(\alpha _{k},r) = (\lambda _{j}
,\rho _{j} )$. Then, by (\ref{dodatno-F-1}), we have $x \in I_j$
and, by (\ref{dodatno-F-2}), $I_j\diamond I_i$.

(ii) For any $n\in \mathbb{N}$ we can find  a positive rational
number $r$ such that
$$
d(\alpha_n, \lambda_{i} ) + \rho _{i} <r~\mbox{ and }~ d(\alpha_n,
\lambda _{j} ) + \rho _{j}  < r
$$
and then a number $k \in \mathbb{N}$ such that $(\alpha _{n},r) =
(\lambda _{k} , \rho _{k} )$ is the desired number.
\end{proof}

\begin{theorem}
Let $(Y,\mathcal{S},(B_{i} ))$ be a pseudocompactification of a
computable metric space $(X,d,\alpha )$. Then
$(Y,\mathcal{S},(B_{i} ))$ is a computable topological space.
\end{theorem}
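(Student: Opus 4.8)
The plan is to first verify that $(B_{i})$ is a basis for $\mathcal{S}$, and then to produce c.e.\ characteristic relations for $(Y,\mathcal{S},(B_{i}))$ assembled, according to the parities of the indices, out of the relations $\diamond$ and $\subseteq_{F}$ of the computable metric space $(X,d,\alpha)$ (which are c.e.\ by Proposition \ref{svojstva-diamond-F}), together with Lemma \ref{dodatno-F}.

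For the basis: the sets $B_{2i}=I_{i}$ are exactly the rational open balls of $(X,d,\alpha)$, so they already provide, inside any $\mathcal{S}$-open set, a basic neighbourhood of every point of $X$; only the point $\infty$ needs attention. If $O\in\mathcal{S}$ and $\infty\in O$, then $O=\{\infty\}\cup U$ with $U$ open in $(X,d)$ and $X\setminus U$ bounded, so $X\setminus U\subseteq B(x_{0},R)$ for some $x_{0}\in X$ and some rational $R>0$; using density of the sequence $(\lambda_{i})$ together with the fact that for a fixed centre the numbers $\rho_{i}$ range over all positive rationals, we may pick $m$ with $d(\lambda_{m},x_{0})<1$ and $\rho_{m}>R+1$, whence $X\setminus U\subseteq\hat{I}_{m}$ and therefore $\infty\in B_{2m+1}\subseteq O$. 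Since each $B_{2i+1}$ lies in $\mathcal{S}$ (its complement $\hat{I}_{i}$ is closed and bounded), this shows $(B_{i})$ is a basis for $\mathcal{S}$.

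I would then let $\mathcal{D}$ consist of all pairs $(2i,2j)$ with $I_{i}\diamond I_{j}$ and all pairs $(2i,2j+1)$ and $(2j+1,2i)$ with $I_{i}\subseteq_{F}I_{j}$, and let $\mathcal{C}$ consist of all pairs $(2i,2j)$ with $I_{i}\subseteq_{F}I_{j}$, all pairs $(2i,2j+1)$ with $I_{i}\diamond I_{j}$, and all pairs $(2i+1,2j+1)$ with $I_{j}\subseteq_{F}I_{i}$. Both are c.e., being finite unions of reindexings (along $i\mapsto 2i$ and $i\mapsto 2i+1$) of the c.e.\ sets of Proposition \ref{svojstva-diamond-F}. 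Property (1) follows from Proposition \ref{svojstva-diamond-F}(1),(2): $B_{2i}\cap B_{2j}=I_{i}\cap I_{j}$, while $I_{i}\subseteq_{F}I_{j}$ gives $I_{i}\subseteq\hat{I}_{j}$ so $B_{2i}\cap B_{2j+1}=I_{i}\setminus\hat{I}_{j}=\emptyset$ (and $\mathcal{D}$ contains no pair of two odd indices, consistently with $\infty\in B_{2i+1}\cap B_{2j+1}$ always). Property (2) is equally direct: $I_{i}\diamond I_{j}$ forces $I_{i}\cap\hat{I}_{j}=\emptyset$, hence $B_{2i}\subseteq B_{2j+1}$, while $I_{j}\subseteq_{F}I_{i}$ forces $\hat{I}_{j}\subseteq\hat{I}_{i}$, hence $B_{2i+1}\subseteq B_{2j+1}$; the even--even case is Proposition \ref{svojstva-diamond-F}(2).

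Property (3) splits into the case of two points of $X$ (handled by Proposition \ref{svojstva-diamond-F}(3) via even indices) and the case of $x\in X$ and $\infty$: picking $l$ with $x\in I_{l}$ and, by Lemma \ref{dodatno-F}(ii), $m$ with $I_{l}\subseteq_{F}I_{m}$, we get $(2l,2m+1)\in\mathcal{D}$ with $x\in B_{2l}$ and $\infty\in B_{2m+1}$. Property (4) is where the main obstacle lies. The even--even case is Proposition \ref{svojstva-diamond-F}(4). In a mixed case, say $x\in B_{2i}\cap B_{2j+1}$, necessarily $x\in I_{i}$ and $x\notin\hat{I}_{j}$; Lemma \ref{dodatno-F}(i) gives $p$ with $x\in I_{p}$ and $I_{p}\diamond I_{j}$, then Proposition \ref{svojstva-diamond-F}(4) gives $m$ with $x\in I_{m}$, $I_{m}\subseteq_{F}I_{i}$, $I_{m}\subseteq_{F}I_{p}$, and Proposition \ref{svojstva-diamond-F}(7) upgrades $I_{m}\subseteq_{F}I_{p}$ and $I_{p}\diamond I_{j}$ to $I_{m}\diamond I_{j}$, so $k=2m$ works. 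For $x\in B_{2i+1}\cap B_{2j+1}$ with $x\in X$ one applies Lemma \ref{dodatno-F}(i) twice and then Proposition \ref{svojstva-diamond-F}(4),(7) in the same way; for $x=\infty$ one uses Lemma \ref{dodatno-F}(ii) to get $m$ with $I_{i}\subseteq_{F}I_{m}$ and $I_{j}\subseteq_{F}I_{m}$, giving $k=2m+1$. The delicate point throughout is to shrink a neighbourhood of $x$ to a rational ball that is simultaneously formally inside the relevant open balls and formally disjoint from the relevant closed balls, and this is precisely what the combination of Lemma \ref{dodatno-F} with parts (4) and (7) of Proposition \ref{svojstva-diamond-F} delivers.
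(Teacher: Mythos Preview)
Your proposal is correct and follows essentially the same approach as the paper: the relations $\mathcal{C}$ and $\mathcal{D}$ you define coincide exactly with the paper's $\Gamma_{4}\cup\Gamma_{5}\cup\Gamma_{6}$ and $\Gamma_{1}\cup\Gamma_{2}\cup\Gamma_{3}$, and your case analyses for properties (1)--(4) match the paper's, invoking the same ingredients (Proposition \ref{svojstva-diamond-F} and Lemma \ref{dodatno-F}) at the same points. The only cosmetic difference is in property (3) for the pair $(x,\infty)$: you enlarge via Lemma \ref{dodatno-F}(ii) to get $I_{l}\subseteq_{F}I_{m}$, whereas the paper shrinks via Proposition \ref{svojstva-diamond-F}(4) to get $I_{i}\subseteq_{F}I_{j}$ with $x\in I_{i}$; both produce a pair in $\Gamma_{2}$.
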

\begin{proof}
Let $\mathcal{B}=\{B_{i} \mid i\in \mathbb{N}\}$. We first show
that $\mathcal{B}$ is a basis for the topology $\mathcal{S}$.
Clearly
$$
\mathcal{B} = \left\{ I_i \mid i \in \mathbb{N} \right\} \cup
\left\{ \{ \infty \} \cup \left( X \setminus \hat{I}_i \right)
\mid i \in \mathbb{N} \right\}
$$
and it is immediate that $\mathcal{B} \subseteq \mathcal{S}$.

Now we check that for each $V \in \mathcal{S}$ and each $x \in V$
there exists $B \in \mathcal{B}$ such that $x \in \mathcal{B}
\subseteq V$. Let $V \in \mathcal{S}$ and  $x \in V$. We have two
cases: $V \in \mathcal{T}_d$ and $V \notin \mathcal{T}_d$.

If $V \in \mathcal{T}_d$, then there exists  $i \in \mathbb{N}$
such that
  $x \in I_i \subseteq V$ and clearly $I_i \in \mathcal{B}$.

Suppose  $V \notin \mathcal{T}_d$. Then $V = \{ \infty \} \cup U$,
where $U$ is open in $(X,d)$ and
 $X \setminus U$ is bounded in $(X,d)$.
 We have $x \in \{ \infty \} \cup U$.
If $x \in U$, then there exists $i \in \mathbb{N}$ such that $x
\in I_i \subseteq U$ and so $x \in I_i \subseteq V$.

Suppose $x = \infty$. Certainly, there exists $i \in \mathbb{N}$
such that $X \setminus U \subseteq \hat{I}_i$, which implies $X
\setminus \hat{I}_i \subseteq U$ and we get
$$
\infty \in \{ \infty \} \cup \left( X \setminus \hat{I}_i \right)
\subseteq \{ \infty \} \cup U.
$$
Hence, there exists $B \in \mathcal{B}$ such that $\infty \in B
\subseteq V$. We conclude that $\mathcal{B}$ is a basis for
$\mathcal{S}$.

Let
\begin{align*}
\Gamma _{1} =&\left\{(i,j)\in \mathbb{N}^{2}\mid i,j \in 2\mathbb{N}\mbox{ and }I_{\frac{i}{2}}\diamond
I_{\frac{j}{2}}\right\},\\
\Gamma _{2} =&\left\{(i,j)\in \mathbb{N}^{2}\mid i \in 2\mathbb{N}, j \in 2\mathbb{N} + 1\mbox{ and }
I_{\frac{i}{2}}\subseteq _{F} I_{\frac{j-1}{2}}\right\},\\
\Gamma _{3} =&\left\{(i,j)\in \mathbb{N}^{2}\mid i \in 2\mathbb{N} + 1, j \in 2\mathbb{N}\mbox{ and }
I_{\frac{j}{2}}\subseteq _{F}I_{\frac{i-1}{2}}\right\},\\
\Gamma _{4} =&\left\{(i,j)\in \mathbb{N}^{2}\mid i,j \in 2\mathbb{N}\mbox{ and }I_{\frac{i}{2}}\subseteq _{F} I_{\frac{j}{2}}\right\},\\
\Gamma _{5} =&\left\{(i,j)\in \mathbb{N}^{2}\mid i \in 2\mathbb{N}, j \in 2\mathbb{N} + 1\mbox{ and }
I_{\frac{i}{2}}\diamond I_{\frac{j-1}{2}}\right\},\\
\Gamma _{6} =&\left\{(i,j)\in \mathbb{N}^{2}\mid i, j \in 2\mathbb{N} + 1\mbox{ and }I_{\frac{j-1}{2}}\subseteq _{F}I_{\frac{i-1}{2}}\right\}.
\end{align*}

Let
$$\mathcal{D}=\Gamma _{1} \cup \Gamma _{2} \cup \Gamma _{3} $$
and
$$\mathcal{C}=\Gamma _{4} \cup \Gamma _{5}\cup \Gamma _{6}.$$
We claim that $\mathcal{C}$ and $\mathcal{D}$
are characteristic relations for $(Y, \mathcal{S}, (B_i)_{i \in \mathbb{N}})$.

Using Proposition \ref{svojstva-diamond-F} we conclude that the sets
$\Gamma_{1},$ \dots , $\Gamma _{6}$ are c.e. So $\mathcal{C}$ and $\mathcal{D}$
are c.e. We now verify properties (1)-(4) from
the definition of a computable topological space.

\begin{enumerate}
\item[(1)]
Suppose $i,j \in \mathbb{N}$ are such that $(i,j) \in \mathcal{D}$.\\
\begin{enumerate}
\item[\underline{Case 1}] $(i,j) \in \Gamma_1$. Then
 $I_{\frac{i}{2}} \cap I_{\frac{j}{2}} = \emptyset$ and since $B_i = I_{\frac{i}{2}}$, $B_j = I_{\frac{j}{2}}$, we have
 $B_i \cap B_j = \emptyset$.
 \item[\underline{Case 2}] $(i,j) \in \Gamma_2$. Then
  $I_{\frac{i}{2}} \subseteq I_{\frac{j-1}{2}}$, which implies
  $I_{\frac{i}{2}} \subseteq \hat{I}_{\frac{j-1}{2}}$ and therefore
  $I_{\frac{i}{2}} \cap (\{ \infty \} \cup (X \setminus \hat{I}_{\frac{j-1}{2}}) ) = \emptyset$.
  So $B_i \cap B_j = \emptyset$.
\item[\underline{Case 3}] $(i,j) \in \Gamma_3$. In the same way we get $B_i \cap B_j = \emptyset$.\\
\end{enumerate}

\item[(2)]
Suppose $i,j \in \mathbb{N}$ are such that $(i,j) \in \mathcal{C}$.\\
\begin{enumerate}
\item[\underline{Case 1}] $(i,j) \in \Gamma_4$. Then
 $I_{\frac{i}{2}} \subseteq I_{\frac{j}{2}}$ and $B_i \subseteq B_j$.
 \item[\underline{Case 2}] $(i,j) \in \Gamma_5$. Then
 $I_{\frac{i}{2}} \cap \hat{I}_{\frac{j-1}{2}} = \emptyset$ and so
 $I_{\frac{i}{2}} \subseteq (X \setminus \hat{I}_{\frac{j-1}{2}}) \cup \{ \infty \}$.
 Hence, $B_i \subseteq B_j$.
 \item[\underline{Case 3}] $(i,j) \in \Gamma_6$. Then $\hat{I}_{\frac{j-1}{2}} \subseteq I_{\frac{i-1}{2}}$, which implies
 $\hat{I}_{\frac{j-1}{2}} \subseteq \hat{I}_{\frac{i-1}{2}}$
 and this gives $X \setminus \hat{I}_{\frac{i-1}{2}} \subseteq X \setminus \hat{I}_{\frac{j-1}{2}}$.
 So $B_i \subseteq B_j$.\\
\end{enumerate}

\item[(3)]
Suppose $x,y \in X \cup \{ \infty \}$, $x \neq y$.\\
\begin{enumerate}
\item[\underline{Case 1}] $x,y \in X$. Then there exist
$i,j \in \mathbb{N}$ such that $x \in I_i$, $y \in I_j$ and such that
$I_i\diamond I_j$. It follows $x \in B_{2i}$, $y \in B_{2j}$ and $(2i, 2j) \in \mathcal{D}$.
 \item[\underline{Case 2}] One of the points $x$ and $y$ is equal to
 $\infty$. We may assume $y= \infty$. Then clearly $x \in X$.
 Choose $j \in \mathbb{N}$ such that $x \in I_j$.
 Then there exists  $i \in \mathbb{N}$ such that $x \in I_i$ and
 $I_i\subseteq _{F}I_j$. It follows $x \in B_{2i}$, $\infty \in B_{2j+1}$
 and $(2i,2j+1) \in \mathcal{D}$.\\
\end{enumerate}

\item[(4)]
Suppose $i,j \in \mathbb{N}$ and $x \in B_i \cap B_j$.\\
\begin{enumerate}
\item[\underline{Case 1}] $i,j \in 2\mathbb{N}$. Then $x \in I_{\frac{i}{2}} \cap I_{\frac{j}{2}}$ and
therefore there exists $k \in \mathbb{N}$ such that $x \in I_k$
and $I_k\subseteq _{F}I_{\frac{i}{2}}$ and $I_k\subseteq _{F}I_{\frac{j}{2}}$.
So $x \in B_{2k}$, $(2k, i) \in \mathcal{C}$ and $(2k,j) \in \mathcal{C}$.
\item[\underline{Case 2}] $i \in 2\mathbb{N}$, $j \in 2\mathbb{N}+1$.
Then $x \in I_{\frac{i}{2}} \cap (\{ \infty \} \cup (X \setminus \hat{I}_{\frac{j-1}{2}}))$.
It follows $x \in I_{\frac{i}{2}}$ and $x \notin \hat{I}_{\frac{j-1}{2}}$.
By Lemma \ref{dodatno-F} there exists $l \in \mathbb{N}$ such that $x \in I_l$ and
$I_l\diamond I_{\frac{j-1}{2}}$.
We have $x \in I_{\frac{i}{2}} \cap I_l$ and therefore there exists
$k \in \mathbb{N}$ such that $x \in I_k$, $I_k\subseteq _{F}I_{\frac{i}{2}}$
and $I_k\subseteq _{F}I_l$. It follows  $I_k\diamond I_{\frac{j-1}{2}}$.
Hence we have $x \in B_{2k}$, $(2k, i) \in \mathcal{C}$ and $(2k,j) \in \mathcal{C}$.
\item[\underline{Case 3}] $i \in 2\mathbb{N} +1$, $j \in 2\mathbb{N}$. This is essentially Case 2.
\item[\underline{Case 4}] $i,j \in 2\mathbb{N} + 1$. Then
$$
x \in \left(\{ \infty \} \cup \left(X \setminus
\hat{I}_{\frac{i-1}{2}} \right) \right) \cap\left(\{ \infty \}
\cup \left(X \setminus \hat{I}_{\frac{j-1}{2}} \right) \right).
$$

\begin{itemize}
\item[\underline{Subcase 1}] $x \in X$. We have
$$x \notin \hat{I}_{\frac{i-1}{2}}~\mbox{ and }~x \notin
\hat{I}_{\frac{j-1}{2}}.$$ By Lemma \ref{dodatno-F} there exist
 $i', j' \in \mathbb{N}$ such that $x \in I_{i'}$, $I_{i'}\diamond I_{\frac{i-1}{2}}$,
  $x \in I_{j'}$ and
$I_{j'}\diamond I_{\frac{j-1}{2}}$.

We have $x \in I_{i'} \cap I_{j'}$ and so there exists $k \in \mathbb{N}$
such that $x \in I_k$, $I_k\subseteq _{F}I_{i'}$ and $I_k\subseteq _{F}I_{j'}$.
It follows $I_k\diamond I_{\frac{i-1}{2}}$ and $I_k\diamond I_{\frac{j-1}{2}}$.
We have $x \in B_{2k}$,
$(2k,i) \in \mathcal{C}$ and $(2k,j) \in \mathcal{C}$.

\item[\underline{Subcase 2}]  $x= \infty$. By Lemma \ref{dodatno-F}
 there exists $k \in \mathbb{N}$ such that $I_{\frac{i-1}{2}}\subseteq _{F}I_k$
 and $I_{\frac{j-1}{2}}\subseteq _{F}I_k$.
 We have $\infty \in B_{2k+1}$, $(2k+1, i) \in \mathcal{C}$ and
$(2k+1, j) \in \mathcal{C}$.
\end{itemize}
\end{enumerate}
\end{enumerate}

We have proved that $\mathcal{C}$ and $\mathcal{D}$ are characteristic relations
for  $(Y,\mathcal{S}, (B_i)_{i \in \mathbb{N}})$. Hence
$(Y,\mathcal{S}, (B_i)_{i \in \mathbb{N}})$ is a computable topological space.
\end{proof}

If a metric space $(X,d)$ has compact closed balls, then $(Y,\mathcal{S})$, where $\mathcal{S}$ is given by (\ref{pseudo-top}), is a one-point compactification of $(X,\mathcal{T}_{d})$. Moreover, we have the following proposition.
\begin{proposition} \label{CB-X-Y}
Let $(X,d)$ be a metric space, let $Y=X\cup \{\infty\}$, where $\infty\notin X$, and let $\mathcal{S}$ be given by (\ref{pseudo-top}). Suppose $K\subseteq X$ is such that $K\cap D$ is a compact
set in $(X,d)$ for each closed ball $D$ in $(X,d)$. Then $K\cup \{\infty\}$, as a subspace of $(Y,\mathcal{S})$, is a one-point compactification of $K$ (where $K$ is taken as a subspace of $(X,\mathcal{T}_{d})$).
In particular,  $K\cup \{\infty\}$ is a compact set in $(Y,\mathcal{S})$.
\end{proposition}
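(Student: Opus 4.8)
The plan is to prove the ``in particular'' clause as a consequence of the main assertion, and to prove the main assertion by showing that the subspace topology that $(Y,\mathcal{S})$ induces on $K\cup\{\infty\}$ is literally the one-point compactification topology on $K\cup\{\infty\}$, where $K$ carries the subspace topology $\mathcal{T}_{K}$ inherited from $(X,\mathcal{T}_{d})$. Recall that the one-point compactification topology on $K\cup\{\infty\}$ consists of the sets in $\mathcal{T}_{K}$ together with the sets $\{\infty\}\cup V$ where $V\in\mathcal{T}_{K}$ and $K\setminus V$ is compact. Two preliminary facts will be used throughout: a subset of $K$ is open in $\mathcal{T}_{K}$ if and only if it has the form $U\cap K$ for some $U$ open in $(X,d)$; and, since $(X,d)$ is a metric space, every compact subset of $X$ is closed in $X$ and is contained in some closed ball.

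First I would check that every set of the form $W\cap(K\cup\{\infty\})$, with $W\in\mathcal{S}$, is open in the one-point compactification. If $W\in\mathcal{T}_{d}$ then $W\cap(K\cup\{\infty\})=W\cap K\in\mathcal{T}_{K}$. If $W=\{\infty\}\cup U$ with $U$ open in $(X,d)$ and $X\setminus U$ bounded, then $W\cap(K\cup\{\infty\})=\{\infty\}\cup(U\cap K)$, and I must show that $K\setminus(U\cap K)=K\cap(X\setminus U)$ is compact; this is exactly where the hypothesis on $K$ enters, since $X\setminus U$ is contained in a closed ball $D$, so $K\cap(X\setminus U)$ is closed in $K$ and contained in the compact set $K\cap D$, hence compact.

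Conversely I would check that every open set of the one-point compactification arises this way. A set $V\in\mathcal{T}_{K}$ has the form $U\cap K$ with $U\in\mathcal{T}_{d}$, and then $U\cap(K\cup\{\infty\})=V$. A set $\{\infty\}\cup V$ with $V\in\mathcal{T}_{K}$ and $K\setminus V$ compact requires a small construction: put $U=X\setminus(K\setminus V)$; since $K\setminus V$ is compact it is closed in $X$ and bounded, so $U$ is open in $(X,d)$ with $X\setminus U$ bounded, hence $\{\infty\}\cup U\in\mathcal{S}$, and $U\cap K=K\setminus(K\setminus V)=V$ gives $(\{\infty\}\cup U)\cap(K\cup\{\infty\})=\{\infty\}\cup V$. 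This establishes equality of the two topologies, i.e.\ that $K\cup\{\infty\}$, as a subspace of $(Y,\mathcal{S})$, is a one-point compactification of $K$.

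Finally, the ``in particular'' statement follows because a one-point compactification is always compact: from any open cover choose a member $\{\infty\}\cup V$ containing $\infty$; then $K\setminus V$ is compact and is covered by finitely many of the remaining members, which together with $\{\infty\}\cup V$ form a finite subcover. The only genuinely non-formal point in the whole argument is the passage, in the second and third paragraphs, between ``bounded complement in $X$'' (the condition defining the neighbourhoods of $\infty$ in $\mathcal{S}$) and ``compact complement in $K$'' (the condition defining the neighbourhoods of $\infty$ in the one-point compactification of $K$); the hypothesis that $K\cap D$ is compact for every closed ball $D$ is precisely what bridges these in one direction, while metrizability of $X$ supplies the implication compact $\Rightarrow$ closed and bounded in the other.
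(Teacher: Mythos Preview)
Your proof is correct and follows essentially the same approach as the paper: both arguments identify the subspace topology on $K\cup\{\infty\}$ with the one-point compactification topology by checking the two inclusions of open sets, using the hypothesis on $K$ to pass from ``bounded complement in $X$'' to ``compact complement in $K$'' and metrizability for the reverse. The only cosmetic difference is in the converse direction: you take $U=X\setminus(K\setminus V)$ directly (using that the compact set $K\setminus V$ is closed in the metric space $X$), whereas the paper first writes $V=K\cap U$ for some open $U$, observes that $K$ itself is closed in $(X,d)$ as a consequence of the hypothesis, and then enlarges to $U'=U\cup(X\setminus K)$; your shortcut is slightly cleaner but amounts to the same thing.
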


\begin{proof}
Let $V\subseteq K\cup \{\infty\}$. By the definition of the subspace topology, $V$ is open in $K\cup \{\infty\}$ if and only if there exists an open set $U$ in $(X,d)$ such that
$$V=K\cap U\mbox{ or }(V=(K\cap U)\cup \{\infty\}\mbox{ and }X\setminus U\mbox{ bounded in }(X,d)).$$

Suppose $U$ is open and $X\setminus U$ is bounded in $(X,d)$. Let $W=K\cap U$. Then $W$ is open in $K$ and $K\setminus W=K\setminus U$ is closed and bounded in $K$, which, together with the assumption of the proposition, gives  that  $K\setminus W$ is compact in $K$.

Conversely, if $W$ is an open set in $K$ such that $K\setminus W$ is compact in $K$, then $W=K\cap U$, where $U$ is open in $(X,d)$. Since $K$ is closed in $(X,d)$ (which follows from the assumption of the proposition), the set $U'=U\cup (X\setminus K)$ is open in $(X,d)$. We have $W=K\cap U'$ and $$X\setminus U'=(X\setminus U)\cap K=K\setminus U=K\setminus W,$$ hence $X\setminus U'$ is bounded in $(X,d)$.

Altogether, we have the following conclusion: $V$ is open in $K\cup \{\infty\}$ if and only if $V$ is open in $K$ or $V=W \cup \{\infty\}$, where $W$ is open in $K$ and $K\setminus W$ compact in $K$.
\end{proof}

Let $(X,d,\alpha)$ be a computable metric space. For $l \in \mathbb{N}$
we define $$L_l = \bigcap_{i\in [l]}\hat{I}_{i} .$$

Let $i,l \in \mathbb{N}$. We write $$I_{i} \diamond L_{l} $$
if there exists $j \in [l]$ such that $I_i\diamond I_j$.
Note: if $I_i\diamond L_l$, then  $I_i \cap L_l = \emptyset$.

Let $u,l \in \mathbb{N}$. We write $$J_u\diamond L_l$$
if $I_i\diamond L_l$ for each $i \in [u]$.
Note: if $J_u\diamond L_l$, then $J_u \cap L_l = \emptyset$.

The following proposition can be proved in the same fashion as Proposition
\ref{c.e.-CD}.
\begin{proposition} \label{Ju-Ll}
Let $(X,d,\alpha)$ be a computable metric space.
Then the sets  $$\{ (i,l) \in \mathbb{N}^2 \mid I_i\diamond L_l \}\mbox{
and }\{ (u,l) \in \mathbb{N}^2 \mid J_{u}\diamond L_l \}$$ are c.e.
\end{proposition}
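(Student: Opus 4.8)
The plan is to follow the proof of Proposition~\ref{c.e.-CD} almost verbatim, with the relation $\diamond$ between $I_i$ and $I_j$ playing the role that $\mathcal{C}$ and $\mathcal{D}$ play there, and using the fact, already recorded in Proposition~\ref{svojstva-diamond-F} (the sets in~(\ref{sets-1})), that $\{(i,j)\in\mathbb{N}^2\mid I_i\diamond I_j\}$ is a c.e.\ set. Note that, in this analogy, the set $\{(i,l)\mid I_i\diamond L_l\}$ is structurally like $\Omega_1$ and $\{(u,l)\mid J_u\diamond L_l\}$ is structurally like $\Omega_2$: even though $\diamond$ is a disjointness relation, $L_l$ is an \emph{intersection} of closed balls, so being $\diamond$-disjoint from $L_l$ only requires an \emph{existential} witness $j\in[l]$ with $I_i\diamond I_j$, exactly as $I_i\subseteq_{\mathcal{C}}J_a$ requires an existential witness $j\in[a]$ with $(i,j)\in\mathcal{C}$.

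First I would treat $\{(i,l)\in\mathbb{N}^2\mid I_i\diamond L_l\}$. By definition $I_i\diamond L_l$ holds iff there exists $j\in[l]$ with $I_i\diamond I_j$. Since the set $\{(j,l)\in\mathbb{N}^2\mid j\in[l]\}$ is computable (because $j\mapsto[j]$ is c.f.v., cf.~(\ref{p2-eq})) and $\{(i,j)\in\mathbb{N}^2\mid I_i\diamond I_j\}$ is c.e., the set $\{(i,l)\mid I_i\diamond L_l\}$ is the projection onto the first and third coordinates of the c.e.\ set $\{(i,j,l)\in\mathbb{N}^3\mid I_i\diamond I_j \text{ and } j\in[l]\}$, hence it is c.e.

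Next I would treat $\{(u,l)\in\mathbb{N}^2\mid J_u\diamond L_l\}$. By definition $J_u\diamond L_l$ holds iff $I_i\diamond L_l$ for every $i\in[u]$. Consider the function $\Phi\colon\mathbb{N}^2\to\mathcal{P}(\mathbb{N}^2)$ given by $\Phi(u,l)=[u]\times\{l\}$, which is c.f.v.\ by Proposition~\ref{cfv}(2). Then $(u,l)\in\{J_u\diamond L_l\}$ iff $\Phi(u,l)\subseteq\{(i,l)\in\mathbb{N}^2\mid I_i\diamond L_l\}$, and since the latter set has just been shown to be c.e., Proposition~\ref{cfv}(5) gives that $\{(u,l)\mid J_u\diamond L_l\}$ is c.e. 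There is no genuine obstacle here; the only points requiring a little care are to present the first set as a projection (equivalently, computable image) of a c.e.\ set, and to choose $\Phi$ with the right arity so that Proposition~\ref{cfv}(5) applies directly to the second set.
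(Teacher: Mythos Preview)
Your proof is correct and is exactly the approach the paper intends: the paper simply says the result ``can be proved in the same fashion as Proposition~\ref{c.e.-CD}'', and your argument spells out precisely that analogy, with the existential witness for $I_i\diamond L_l$ playing the role of $\Omega_1$ and the universal condition for $J_u\diamond L_l$ handled via Proposition~\ref{cfv}(5) as for $\Omega_2$.
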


\begin{lemma} \label{I-L-J}
Let $(X,d,\alpha)$ be a computable metric space.
\begin{enumerate}
\item[(i)]
Let $l \in \mathbb{N}$ and $x \in X$ be such that $x \notin L_l$.
Then there exists $i \in \mathbb{N}$ such that $x \in I_i$
and $I_i\diamond L_l$.
\item[(ii)]
Let $l \in \mathbb{N}$ and let $K$ be a nonempty compact set in $(X,d)$
such that $K \cap L_l = \emptyset$.
Then there exists $u \in \mathbb{N}$ such that $J_u\diamond L_l$ and
 $K \subseteq J_u$.
 \end{enumerate}
\end{lemma}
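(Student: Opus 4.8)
The plan is to prove (i) by reducing it to Lemma \ref{dodatno-F}(i), and then to prove (ii) by the same compactness argument that was used for Lemma \ref{x-notin-K-lem} and Lemma \ref{K-L-disj}.

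For (i): the hypothesis $x\notin L_l$ means precisely that $x\notin \bigcap_{i\in [l]}\hat{I}_i$, so there is some $j\in [l]$ with $x\notin \hat{I}_j$. Now I would apply Lemma \ref{dodatno-F}(i) to this $j$: it yields $i\in \mathbb{N}$ such that $x\in I_i$ and $I_i\diamond I_j$. Since $j\in [l]$, this is exactly the statement that $I_i\diamond L_l$, and $x\in I_i$, so $i$ is the required number.

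For (ii): fix $l$ and $K$ nonempty compact with $K\cap L_l=\emptyset$. For each $x\in K$ we have $x\notin L_l$, so by part (i) there is $i_x\in \mathbb{N}$ with $x\in I_{i_x}$ and $I_{i_x}\diamond L_l$. Then $\{I_{i_x}\mid x\in K\}$ is an open cover of the compact set $K$, hence there exist $x_0,\dots,x_n\in K$ with $K\subseteq I_{i_{x_0}}\cup\dots\cup I_{i_{x_n}}$. Choosing $u\in \mathbb{N}$ so that $[u]=\{i_{x_0},\dots,i_{x_n}\}$ gives $K\subseteq J_u$ immediately; and since $I_{i_{x_k}}\diamond L_l$ for each $k$, i.e.\ $I_i\diamond L_l$ for every $i\in [u]$, we get $J_u\diamond L_l$ by definition. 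This $u$ is the desired number.

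There is really no serious obstacle here: both parts are direct consequences of Lemma \ref{dodatno-F}(i) together with the standard "cover a compact set by basic open sets with a formal property and take a finite subcover'' technique already used repeatedly in Section \ref{sect-escs}. The only small point to keep in mind is that $L_l$ being an \emph{intersection} of closed balls is what lets a single witnessing index $j\in[l]$ with $x\notin\hat{I}_j$ suffice for part (i), whereas in the definition of $J_u\diamond L_l$ one needs the formal disjointness to hold for \emph{all} $i\in[u]$ — but this is automatic from the construction above.
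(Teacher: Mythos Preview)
Your proposal is correct and follows essentially the same approach as the paper: part (i) is reduced to Lemma \ref{dodatno-F}(i) via a single witnessing index $j\in[l]$, and part (ii) is the standard compactness argument using (i) and a finite subcover, exactly as the paper does.
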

\begin{proof}
(i) Since $x \notin L_l$, there exists $j \in [l]$
such that $x \notin \hat{I}_{j}$.
By Lemma \ref{dodatno-F}
there exists $i \in \mathbb{N}$ such that $x \in I_i$
a $I_i\diamond I_j$ and it follows
$I_i\diamond L_l$.

(ii) Using (i) and the compactness of $K$ we conclude that there exist
$i_{0} ,\dots ,i_{n} \in \mathbb{N}$ such that
$$K\subseteq I_{i_{0} }\cup \dots \cup I_{i_{n} }$$ and
$I_{i_{0} }\diamond L_{l} $,\dots , $I_{i_{n} }\diamond L_{l} $. Now we take
$u\in \mathbb{N}$ such that $[u]=\{i_{0} ,\dots ,i_{n} \}$.
\end{proof}

\begin{proposition}
\label{S-L-J}
Let $(X,d,\alpha)$ be a computable metric space
and let $S$ be a semicomputable set in $(X,d,\alpha)$. Then the set
$$\{ (l,j) \in \mathbb{N}^2 \mid S \cap L_l \subseteq J_j \}$$
is c.e.
\end{proposition}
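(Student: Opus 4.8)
The plan is to reduce the condition $S\cap L_l\subseteq J_j$ to conditions that involve only a single rational closed ball (so that clause (ii) of the definition of a semicomputable set can be applied) together with a ``disjoint from $L_l$'' condition, which is handled by Lemma \ref{I-L-J} and Proposition \ref{Ju-Ll}. The obstruction to a direct argument is that $L_l$ is a \emph{finite intersection} of rational closed balls, not a single one, so the characterization of semicomputability is not immediately applicable.

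First I would fix $l$ and choose $i_0=(l)_0\in[l]$; then $L_l\subseteq\hat{I}_{i_0}$, and $S\cap\hat{I}_{i_0}$ is compact by clause (i) of semicomputability. Since $L_l\subseteq\hat{I}_{i_0}$, for every $j$ we have $(S\cap L_l)\setminus J_j=\big((S\cap\hat{I}_{i_0})\setminus J_j\big)\cap L_l$. The central step is to prove the equivalence: $S\cap L_l\subseteq J_j$ holds if and only if either (a) $S\cap\hat{I}_{i_0}\subseteq J_j$, or (b) there exists $u\in\mathbb{N}$ with $J_u\diamond L_l$ and $S\cap\hat{I}_{i_0}\subseteq J_j\cup J_u$. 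The ``if'' direction is immediate: in case (a), $S\cap L_l\subseteq S\cap\hat{I}_{i_0}\subseteq J_j$; in case (b), a point of $S\cap L_l$ lies in $J_j\cup J_u$ but, since $J_u\diamond L_l$ gives $J_u\cap L_l=\emptyset$, not in $J_u$, hence in $J_j$. For the ``only if'' direction, put $K=(S\cap\hat{I}_{i_0})\setminus J_j$, which is compact (it is closed in the compact set $S\cap\hat{I}_{i_0}$ because $J_j$ is open). If $K=\emptyset$ then $S\cap\hat{I}_{i_0}\subseteq J_j$, i.e.\ (a). If $K\neq\emptyset$, then $K\cap L_l=(S\cap L_l)\setminus J_j=\emptyset$, so Lemma \ref{I-L-J}(ii) yields $u$ with $J_u\diamond L_l$ and $K\subseteq J_u$; together with $S\cap\hat{I}_{i_0}\subseteq K\cup J_j$ this gives $S\cap\hat{I}_{i_0}\subseteq J_j\cup J_u$, i.e.\ (b).

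Granting this equivalence, c.e.-ness is bookkeeping. Let $\Xi=\{(i,m)\in\mathbb{N}^2\mid \hat{I}_i\cap S\subseteq J_m\}$, which is c.e.\ by clause (ii) of semicomputability, let $\Lambda=\{(u,l)\in\mathbb{N}^2\mid J_u\diamond L_l\}$, which is c.e.\ by Proposition \ref{Ju-Ll}, and fix (as in the proof of Lemma \ref{S-minus-J}(ii)) a computable $\varphi\colon\mathbb{N}^2\to\mathbb{N}$ with $J_a\cup J_b=J_{\varphi(a,b)}$. Then
$$\{(l,j)\mid S\cap L_l\subseteq J_j\}=\{(l,j)\mid ((l)_0,j)\in\Xi\}\cup\{(l,j)\mid\exists u\,[(u,l)\in\Lambda\wedge((l)_0,\varphi(j,u))\in\Xi]\},$$
which is the union of the preimage of a c.e.\ set under a computable map and the projection of a c.e.\ set, hence c.e.

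The main obstacle is the equivalence in the second paragraph, and the key observation behind it is that intersecting with the single closed ball $\hat{I}_{i_0}$ leaves $S\cap L_l$ unchanged while turning the leftover obstruction $(S\cap\hat{I}_{i_0})\setminus J_j$ into a (possibly empty) compact set disjoint from $L_l$ — exactly the situation to which Lemma \ref{I-L-J}(ii) applies; everything else is routine manipulation of c.e.\ sets.
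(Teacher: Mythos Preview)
Your proof is correct and follows essentially the same approach as the paper: both establish the equivalence between $S\cap L_l\subseteq J_j$ and the disjunction ``$S\cap\hat{I}_{(l)_0}\subseteq J_j$ or $\exists u\,(J_u\diamond L_l$ and $S\cap\hat{I}_{(l)_0}\subseteq J_j\cup J_u)$'' via Lemma~\ref{I-L-J}(ii) applied to the compact leftover $(S\cap\hat{I}_{(l)_0})\setminus J_j$, and then read off c.e.-ness from semicomputability of $S$, Proposition~\ref{Ju-Ll}, and a computable union function for the $J$-sets. Your write-up is slightly more explicit in the final bookkeeping, but the argument is the same.
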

\begin{proof}
Let $l,j \in \mathbb{N}$. We claim that
\begin{equation}
\label{S-L-J-1}
S \cap L_l \subseteq J_j
\end{equation}
if and only if
\begin{equation}\label{S-L-J-2}
S \cap \hat{I}_{(l)_0} \subseteq J_j \text{~ or ~}(\exists u \in \mathbb{N} \mbox{ such that } J_u\diamond L_l \mbox{ and } S \cap \hat{I}_{(l)_0} \subseteq J_j \cup J_u)
\end{equation}
(recall the notation from Subsection \ref{subs-fr}).

Let us suppose that (\ref{S-L-J-1}) holds.
The set $S \cap \hat{I}_{(l)_0}$ is closed since it is compact. Therefore
$(S \cap \hat{I}_{(l)_0}) \setminus J_j$ is closed and, as a subset of a compact
set $S \cap \hat{I}_{(l)_0}$, it is also compact.

If $x \in (S \cap \hat{I}_{(l)_0}) \setminus J_j$, then
$x \in S$ and $x \notin J_j$, which, together with
(\ref{S-L-J-1}),
implies $x \notin L_l$. This means that
$((S \cap \hat{I}_{(l)_0})\setminus J_j) \cap L_l = \emptyset$.

If $(S \cap \hat{I}_{(l)_0})\setminus J_j = \emptyset$, then
obviously $S \cap \hat{I}_{(l)_0} \subseteq J_j$.

Suppose $(S \cap \hat{I}_{(l)_0})\setminus J_j \neq \emptyset$.
By Lemma \ref{I-L-J}
there exists $u \in \mathbb{N}$ such that $J_u\diamond L_l$
and $(S \cap \hat{I}_{(l)_0})\setminus J_j \subseteq J_u$.
It follows $S \cap \hat{I}_{(l)_0} \subseteq J_j \cup J_u$.

Hence, (\ref{S-L-J-1}) implies (\ref{S-L-J-2}).

Suppose now that (\ref{S-L-J-2}) holds.
If $S \cap \hat{I}_{(l)_0} \subseteq J_j$,
then from $L_l \subseteq \hat{I}_{(l)_0}$  it follows
$S \cap L_l \subseteq J_j$.
If there exists $u \in \mathbb{N}$
such that $J_u\diamond L_l$ and
$S \cap \hat{I}_{(l)_0}\subseteq J_j \cup J_u$, then
we have $J_u \cap L_l = \emptyset$ and it follows
$S \cap L_l \subseteq J_j$.

So the statements (\ref{S-L-J-1}) and (\ref{S-L-J-2}) are
equivalent.  Using Lemma \ref{S-minus-J}, Proposition \ref{Ju-Ll} and
the fact that $S$ is semicomputable it is easy to conclude that the set
of all $(l,j)\in \mathbb{N}^{2}$ for which (\ref{S-L-J-2}) holds is
c.e. This proves the claim of the proposition.
\end{proof}

The main idea about psudocompactifications is to reduce the problem of computability of noncompact semicomputable sets in $(X,d,\alpha )$ to  computability of (compact) semicomputable sets in  $(Y,\mathcal{S},(B_{i} ))$. Note the following: if the metric space $(X,d)$ is bounded, each semicomputable set in $(X,d,\alpha )$ is compact. Therefore, the case when $(X,d)$ is bounded is not interesting in view of pseudocompactifications.

\begin{proposition} \label{semi-semi-comp}
Let $(X,d, \alpha)$ be a computable metric space and let $(Y,\mathcal{S},(B_{i} ))$
be its pseudocompactification. Let $K$ be a semicomputable set in  $(X,d,\alpha)$. Suppose the metric space $(X,d)$ is unbounded.
\begin{enumerate}
\item[(i)] If $K$ is compact in $(X,d)$, then $K$ is semicomputable in  $(Y, \mathcal{S}, (B_i) )$.
\item[(ii)] If $K$ is not compact in $(X,d)$, then $K \cup \{ \infty \}$ is semicomputable in  $(Y, \mathcal{S}, (B_i) )$.
\end{enumerate}
\end{proposition}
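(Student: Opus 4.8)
The plan is to prove (i) and (ii) simultaneously. Write $\widetilde K=K$ in case (i) and $\widetilde K=K\cup\{\infty\}$ in case (ii), and $J_j^{Y}=\bigcup_{i\in[j]}B_i$. First I would observe that $\widetilde K$ is compact in $(Y,\mathcal{S})$: in case (i) because $K$ is compact in $X$ and $(X,\mathcal{T}_d)$ is a subspace of $(Y,\mathcal{S})$, and in case (ii) by Proposition \ref{CB-X-Y}, since part (i) of the definition of a semicomputable set says exactly that $K\cap D$ is compact for each closed ball $D$. Since $(Y,\mathcal{S})$ is Hausdorff, $\widetilde K$ is then a compact (hence closed) set, so all that remains is to show that $\{j\in\mathbb{N}\mid \widetilde K\subseteq J_j^{Y}\}$ is c.e.

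Next I would analyse $J_j^{Y}$ by splitting $[j]$ into its set $E_j$ of even elements and its set $O_j$ of odd elements. Then $B_i=I_{i/2}$ for $i\in E_j$ and $B_i=\{\infty\}\cup(X\setminus\hat I_{(i-1)/2})$ for $i\in O_j$, so
\[
J_j^{Y}\cap X=\Big(\bigcup_{i\in E_j}I_{i/2}\Big)\cup\Big(X\setminus\bigcap_{i\in O_j}\hat I_{(i-1)/2}\Big),\qquad \infty\in J_j^{Y}\iff O_j\neq\emptyset .
\]
When $E_j\neq\emptyset$ the first union is $J_{u}$ for $u$ with $[u]=\{i/2\mid i\in E_j\}$, and when $O_j\neq\emptyset$ the intersection is $L_l$ for $l$ with $[l]=\{(i-1)/2\mid i\in O_j\}$; the maps $j\mapsto u$, $j\mapsto l$ are computable on their (decidable) domains since $j\mapsto\{i/2\mid i\in E_j\}$ and $j\mapsto\{(i-1)/2\mid i\in O_j\}$ are c.f.v.\ and a code for a c.f.v.\ value can be found effectively via Proposition \ref{cfv}(3). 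Because $\widetilde K\subseteq X$ in case (i) and $\infty\in\widetilde K$ in case (ii), membership $\widetilde K\subseteq J_j^{Y}$ unwinds to: ``$K\subseteq J_{u}$'' if $O_j=\emptyset$ (possible only in case (i)); ``$K\cap L_l\subseteq J_{u}$'' if $E_j\neq\emptyset\neq O_j$; ``$K\cap L_l=\emptyset$'' if $O_j\neq\emptyset=E_j$; and it is false if $O_j=\emptyset$ in case (ii). Hence $\{j\mid\widetilde K\subseteq J_j^{Y}\}$ is obtained by computable substitutions from the three predicates $\{u\mid K\subseteq J_u\}$, $\{(l,u)\mid K\cap L_l\subseteq J_u\}$, $\{l\mid K\cap L_l=\emptyset\}$, so it is c.e.\ once these are.

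The first predicate is c.e.\ directly from the definition of semicomputability of the compact set $K$ (needed only in case (i)), and the second is c.e.\ by Proposition \ref{S-L-J}. The main obstacle is the third one, $\{l\mid K\cap L_l=\emptyset\}$: here, unlike the compact case, one cannot just invoke a defining c.e.\ predicate of $K$. The idea is to pass to the bounded piece: since $(l)_0\in[l]$ we have $L_l\subseteq\hat I_{(l)_0}$, so $K\cap L_l=(K\cap\hat I_{(l)_0})\cap L_l$ with $K\cap\hat I_{(l)_0}$ compact, and I claim
\[
K\cap L_l=\emptyset\iff\exists u\in\mathbb{N}\ \big(J_u\diamond L_l\ \text{and}\ K\cap\hat I_{(l)_0}\subseteq J_u\big).
\]
The implication $\Leftarrow$ is clear from $J_u\cap L_l=\emptyset$. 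For $\Rightarrow$: if $K\cap\hat I_{(l)_0}\neq\emptyset$, apply Lemma \ref{I-L-J}(ii) to this nonempty compact set; if $K\cap\hat I_{(l)_0}=\emptyset$ the right-hand side holds vacuously for any $u$ with $J_u\diamond L_l$, and such a $u$ exists because $X$ is unbounded, so $X\setminus L_l\supseteq X\setminus\hat I_{(l)_0}\neq\emptyset$ and Lemma \ref{I-L-J}(i) yields an $I_i\diamond L_l$. Finally the right-hand side is c.e.\ in $l$: $\{(u,l)\mid J_u\diamond L_l\}$ is c.e.\ by Proposition \ref{Ju-Ll}, and $\{(l,u)\mid \hat I_{(l)_0}\cap K\subseteq J_u\}$ is c.e.\ by part (ii) of the semicomputability of $K$ composed with the computable map $l\mapsto(l)_0$; intersecting and projecting keeps this c.e. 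This completes the reduction, and hence the proof.
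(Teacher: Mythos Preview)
Your proof is correct and follows essentially the same route as the paper's: split $[j]$ into its even and odd parts, reduce $\widetilde K\subseteq J_j^{Y}$ to conditions of the form $K\cap L_l\subseteq J_u$ handled by Proposition~\ref{S-L-J}, and use unboundedness of $(X,d)$ to dispose of the degenerate case where the even part is empty. The only organizational difference is that the paper absorbs the edge cases into total computable functions $f,g$ so that the condition always reads $K\cap L_{g(j)}\subseteq J_{f(j)}$, whereas you treat the edge cases (in particular $K\cap L_l=\emptyset$) as separate c.e.\ predicates; both devices rely on the same ingredients.
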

\begin{proof}
 For $j \in \mathbb{N}$ let
$$C_j = B_{(j)_0} \cup \ldots \cup B_{(j)_{\overline{j}}}.$$
Let $\Phi, \Psi \colon \mathbb{N} \to \mathcal{P}(\mathbb{N})$ be defined by
$$\Phi(j)=[j] \cap 2\mathbb{N}\mbox{ and }\Psi(j)=[j] \cap (2\mathbb{N} + 1).$$
These functions are clearly c.f.v.
Let $j \in \mathbb{N}$. We have  $[j] = \Phi(j) \cup \Psi(j)$ and
\begin{equation}\label{semi-semi-comp-4}
C_j=\bigcup_{i \in [j]}B_i=
\bigcup_{i \in \Phi(j)}B_i \cup \bigcup_{i \in \Psi(j)}B_i=\bigcup_{i \in \Phi(j)}I_{\frac{i}{2}} \cup \bigcup_{i \in \Psi(j)}\left( \{ \infty \} \cup \left( X \setminus \hat{I}_{\frac{i-1}{2}} \right) \right).
\end{equation}

(ii) Suppose $K$ is not compact in $(X,d)$.  We want to prove that $K \cup \{ \infty \}$ is semicomputable in  $(Y, \mathcal{S}, (B_i) )$. By Proposition \ref{CB-X-Y} $K \cup \{ \infty \}$ is compact in $(Y,\mathcal{S})$, so it remains to prove  that
 the set
\begin{equation}\label{semi-semi-comp-5}
\{ j \in \mathbb{N} \mid K \cup \{ \infty \}  \subseteq C_j\}
\end{equation}
 is c.e.

 Let $j\in \mathbb{N}$. Using (\ref{semi-semi-comp-4}) we get
\begin{align*}
K \cup \{ \infty \} \subseteq C_j
&\Leftrightarrow
\Psi(j) \neq \emptyset \text{ and } K \subseteq \bigcup_{i \in \Phi(j)}I_{\frac{i}{2}} \cup \bigcup_{i \in \Psi(j)} \left( X \setminus \hat{I}_{\frac{i-1}{2}} \right)\\
&\Leftrightarrow
\Psi(j) \neq \emptyset \text{ and } K \subseteq \bigcup_{i \in \Phi(j)}I_{\frac{i}{2}} \cup \left( X \setminus \left( \bigcap_{i \in \Psi(j)} \hat{I}_{\frac{i-1}{2}} \right) \right).
\end{align*}
In general, if $A,B \subseteq X$, then $K \subseteq A \cup (X \setminus B)$ if and only if $K \cap B \subseteq A$. So
\begin{equation}
\label{eq: eq1 iz thm: (Y,S) pseudokompaktifikacija i.m.p (X,d,alpha) preko tocke infty. S poluizracunljiv Cb-kompaktan t.d. je S neomeden. Tada je S cap infty poluizracunlji skup}
K \cup \{ \infty \} \subseteq C_j
\Leftrightarrow
\Psi(j) \neq \emptyset \text{ and } K \cap \left( \bigcap_{i \in \Psi(j)} \hat{I}_{\frac{i-1}{2}} \right) \subseteq \bigcup_{i \in \Phi(j)}I_{\frac{i}{2}}.
\end{equation}
It is easy to conclude that the function $\Psi' \colon \mathbb{N} \to \mathcal{P}(\mathbb{N})$ defined by
$$\Psi'(j) = \left\{ \frac{i-1}{2} ~ \middle| ~ i \in \Psi(j) \right\}\mbox{ if }\Psi (j)\neq\emptyset, \mbox{ and }\Psi '(j)=\{0\}\mbox{ if }\Psi (j)=\emptyset $$ is c.f.v. Since the set $\{(j,l)\in \mathbb{N}^{2}\mid \Psi '(j)=[l]\}$ is computable (Proposition \ref{cfv}(3)) and for each $j\in \mathbb{N}$ there exists $l\in \mathbb{N}$ such that $\Psi '(j)=[l]$, there exists a computable function  $g \colon \mathbb{N} \to \mathbb{N}$ such that  $\Psi'(j) = [g(j)]$ for each $j \in \mathbb{N}$.

Let $j \in \mathbb{N}$ be such that $\Psi(j) \neq \emptyset$. Then
$$\bigcap_{i \in \Psi(j)} \hat{I}_{\frac{i-1}{2}}=\bigcap_{i \in \Psi'(j)} \hat{I}_{i}=\bigcap_{i \in [g(j)]} \hat{I}_{i}=L_{g(j)}.$$

By (\ref{eq: eq1 iz thm: (Y,S) pseudokompaktifikacija i.m.p (X,d,alpha) preko tocke infty. S poluizracunljiv Cb-kompaktan t.d. je S neomeden. Tada je S cap infty poluizracunlji skup}) for each  $j \in \mathbb{N}$ we have
\begin{equation}
\label{eq: eq2 iz thm: (Y,S) pseudokompaktifikacija i.m.p (X,d,alpha) preko tocke infty. S poluizracunljiv Cb-kompaktan t.d. je S neomeden. Tada je S cap infty poluizracunlji skup}
K \cup \{ \infty \} \subseteq C_j
\Leftrightarrow
\Psi(j) \neq \emptyset \text{ and } K \cap L_{g(j)} \subseteq \bigcup_{i \in \Phi(j)}I_{\frac{i}{2}}.
\end{equation}

The metric space $(X,d)$ is unbounded by the assumption of the proposition. It is easy to conclude that there exists a computable function  $\gamma \colon \mathbb{N} \to \mathbb{N}$ such that
$$I_i\diamond I_{\gamma(i)}$$
for each  $i \in \mathbb{N}$.

As above, we conclude that there exists a computable function $f \colon \mathbb{N} \to \mathbb{N}$ such that
\begin{equation}
\label{eq: eq3 iz thm: (Y,S) pseudokompaktifikacija i.m.p (X,d,alpha) preko tocke infty. S poluizracunljiv Cb-kompaktan t.d. je S neomeden. Tada je S cap infty poluizracunlji skup}
\bigcup_{i \in \Phi(j)}I_{\frac{i}{2}}
=
J_{f(j)}
\end{equation}
 for each $j\in \mathbb{N}$ such that $\Phi(j)\neq \emptyset$ and
$$
J_{f(j)} =I_{\gamma( (g(j))_0 )}
$$
for each $j\in \mathbb{N}$ such that $\Phi(j)\neq \emptyset$. In the second case, since
$I_{\gamma( (g(j))_0 )}\diamond I_{ (g(j))_0 }$, we have $I_{\gamma( (g(j))_0 )} \cap \hat{I}_{(g(j))_0 } = \emptyset$ and consequently
\begin{equation}\label{semi-semi-comp-2}
J_{f(j)} \cap L_{g(j)} = \emptyset.
\end{equation}

We claim that for each $j\in \mathbb{N}$ the following equivalence holds:
\begin{equation}
\label{semi-semi-comp-3}
K \cup \{ \infty \} \subseteq C_j
\Leftrightarrow
\Psi(j) \neq \emptyset \text{~ and ~} K \cap L_{g(j)} \subseteq J_{f(j)}.
\end{equation}

Suppose $j \in \mathbb{N}$ is such that $K \cup \{ \infty \} \subseteq C_j$. It follows from (\ref{eq: eq2 iz thm: (Y,S) pseudokompaktifikacija i.m.p (X,d,alpha) preko tocke infty. S poluizracunljiv Cb-kompaktan t.d. je S neomeden. Tada je S cap infty poluizracunlji skup}) that $\Psi(j) \neq \emptyset$ and $K \cap L_{g(j)} \subseteq \bigcup_{i \in \Phi(j)} I_{\frac{i}{2}}$.

If $\Phi(j) \neq \emptyset$, then, by (\ref{eq: eq3 iz thm: (Y,S) pseudokompaktifikacija i.m.p (X,d,alpha) preko tocke infty. S poluizracunljiv Cb-kompaktan t.d. je S neomeden. Tada je S cap infty poluizracunlji skup}), $K \cap L_{g(j)} \subseteq J_{f(j)}$. If $\Phi(j) = \emptyset$,  then $K \cap L_{g(j)} = \emptyset$  and  $K \cap L_{g(j)} \subseteq J_{f(j)}$.
In either case we have
\begin{equation}\label{semi-semi-comp-1}
\Psi(j) \neq \emptyset\mbox{ and }K \cap L_{g(j)}\subseteq J_{f(j)}.
\end{equation}

Conversely, suppose $j\in \mathbb{N}$ is such that (\ref{semi-semi-comp-1}) holds.

 If $\Phi(j) \neq \emptyset$, then $K \cap L_{g(j)} \subseteq \bigcup_{i \in \Phi(j)}I_{\frac{i}{2}}$ and it follows from (\ref{eq: eq2 iz thm: (Y,S) pseudokompaktifikacija i.m.p (X,d,alpha) preko tocke infty. S poluizracunljiv Cb-kompaktan t.d. je S neomeden. Tada je S cap infty poluizracunlji skup}) that $K \cup \{ \infty \} \subseteq C_j$.

If $\Phi(j) = \emptyset$, then, by (\ref{semi-semi-comp-2}),  $L_{g(j)} \cap J_{f(j)} = \emptyset$ which, together with $K \cap L_{g(j)} \subseteq J_{f(j)}$, gives $K \cap L_{g(j)} = \emptyset$.
So $K \cap L_{g(j)} \subseteq \bigcup_{i \in \Phi(j)}I_{\frac{i}{2}}$ and (\ref{eq: eq2 iz thm: (Y,S) pseudokompaktifikacija i.m.p (X,d,alpha) preko tocke infty. S poluizracunljiv Cb-kompaktan t.d. je S neomeden. Tada je S cap infty poluizracunlji skup}) implies $K \cup \{ \infty \} \subseteq C_j$.

So (\ref{semi-semi-comp-3}) holds.
It follows readily from Proposition \ref{S-L-J} and (\ref{semi-semi-comp-3}) that the set (\ref{semi-semi-comp-5}) is c.e.

(i) Suppose $K$ is compact in $(X,d)$. Since $(X,\mathcal{T}_{d})$ is a subspace of $(Y,\mathcal{S})$, we have that $K$ is compact in $(Y,\mathcal{S})$. To prove that the set
\begin{equation}\label{semi-semi-comp-6}
\{ j \in \mathbb{N} \mid K   \subseteq C_j\}
\end{equation}
 is c.e.,\ we proceed in a similar way as in (ii). First, for each $j\in \mathbb{N}$ we get
$$K \subseteq C_j \Leftrightarrow K \cap \left( \bigcap_{i \in \Psi(j)} \hat{I}_{\frac{i-1}{2}} \right) \subseteq \bigcup_{i \in \Phi(j)}I_{\frac{i}{2}},$$
where we take  $\bigcap_{i \in \Psi(j)} \hat{I}_{\frac{i-1}{2}}=X$ if $\Psi(j)=\emptyset $. Since $K$ is bounded in $(X,d)$, there exists $i_{0} \in \mathbb{N}$ such that $K\subseteq \hat{I}_{i_{0} }$. Let us take a computable function $g:\mathbb{N}\rightarrow \mathbb{N}$ such that
$$\bigcap_{i \in \Psi(j)} \hat{I}_{\frac{i-1}{2}}=L_{g(j)}$$
for each $j\in \mathbb{N}$ such that $\Psi(j)\neq\emptyset $ and
$$L_{g(j)}=\hat{I}_{i_{0} }$$
for each $j\in \mathbb{N}$ such that $\Psi(j)=\emptyset $. Then, for each $j\in \mathbb{N}$,
$$K  \subseteq C_j \Leftrightarrow  K \cap L_{g(j)} \subseteq \bigcup_{i \in \Phi(j)}I_{\frac{i}{2}}.$$ Now, in the same way as in (ii), we get that the set (\ref{semi-semi-comp-6}) is c.e. Thus $K$ is semicomputable in $(Y,\mathcal{S},(B_{i} ))$.
\end{proof}

\begin{proposition} \label{ce-ce-comp}
Let $(X,d, \alpha)$ be a computable metric space and let $(Y,\mathcal{S},(B_{i} ))$
be its pseudocompactification. Suppose  $K\subseteq X$ is such that $K \cup \{ \infty \}$ is a c.e.\ set in $(Y,\mathcal{S}, (B_i))$. Then $K$ is c.e.\ in $(X,d,\alpha)$.
\end{proposition}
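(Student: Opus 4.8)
The plan is to exploit the fact that the even-indexed members of the basis $(B_i)$ of the pseudocompactification are exactly the rational open balls of the computable metric space: by definition $B_{2i}=I_i$ for each $i\in\mathbb N$. Hence the c.e.\ enumeration witnessing that $K\cup\{\infty\}$ is c.e.\ in $(Y,\mathcal S,(B_i))$, when restricted to even indices, will witness that $K$ is c.e.\ in $(X,d,\alpha)$.

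Concretely, since $K\cup\{\infty\}$ is c.e.\ in $(Y,\mathcal S,(B_i))$, it is by definition a closed subset of $(Y,\mathcal S)$ and the set
$$A=\{i\in\mathbb N\mid B_i\cap(K\cup\{\infty\})\neq\emptyset\}$$
is c.e. First I would observe that $K$ is closed in $(X,d)$: since $(X,\mathcal T_d)$ is a subspace of $(Y,\mathcal S)$ and $K=(K\cup\{\infty\})\cap X$, the set $K$ is closed in $(X,\mathcal T_d)$. Next, for every $j\in\mathbb N$ we have $B_{2j}=I_j$ and $B_{2j}\subseteq X$, so $B_{2j}\cap(K\cup\{\infty\})=I_j\cap K$; consequently $I_j\cap K\neq\emptyset$ if and only if $2j\in A$. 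Therefore
$$\{j\in\mathbb N\mid I_j\cap K\neq\emptyset\}=\{j\in\mathbb N\mid 2j\in A\},$$
and this set is c.e., being the preimage of the c.e.\ set $A$ under the computable function $j\mapsto 2j$. Thus $K$ is c.e.\ in $(X,d,\alpha)$.

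The argument is short and there is no real obstacle; the only points needing a remark are that being c.e.\ presupposes that the set in question is closed (handled by the subspace observation) and that the point $\infty$ plays no role for the even-indexed basic sets, since $B_{2j}\subseteq X$.
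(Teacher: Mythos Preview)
Your proof is correct and essentially identical to the paper's: both use that $K=(K\cup\{\infty\})\cap X$ is closed in the subspace $(X,\mathcal{T}_d)$, and then observe that $\{j\mid I_j\cap K\neq\emptyset\}$ is the preimage of the c.e.\ set $\{i\mid B_i\cap(K\cup\{\infty\})\neq\emptyset\}$ under the computable map $j\mapsto 2j$.
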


\begin{proof}
Since $K \cup \{ \infty \}$ is closed in $(Y,\mathcal{S})$,  $(X,\mathcal{T}_d)$ is a subspace of $(Y,\mathcal{S})$ and $K=(K \cup \{ \infty \}) \cap X$, we have that $K$ is closed in $(X,d)$. The set
$$\Gamma=\left\{ i \in \mathbb{N} ~ \middle| ~ B_i \cap \left( K \cup \{ \infty \} \right) \neq \emptyset \right\}$$
is c.e.\ by the assumption of the proposition. Let $f \colon \mathbb{N} \to \mathbb{N}$, $f(i) = 2i$. For each $i\in \mathbb{N}$ we have
$$I_i \cap K \neq \emptyset\Leftrightarrow B_{2i} \cap \left(K \cup \{ \infty \} \right) \neq \emptyset\Leftrightarrow
2i \in \Gamma\Leftrightarrow
f(i) \in \Gamma\Leftrightarrow i \in f^{-1}(\Gamma).$$
 Thus $\{i\in \mathbb{N}\mid I_{i} \cap K\neq\emptyset \} = f^{-1}(\Gamma)$ and the claim follows.
\end{proof}

 As noted, the implication
\begin{equation}\label{noncom-man}
\partial K\mbox{ computable }\implies K\mbox{ computable }
\end{equation}
need not hold if $K$ is a noncompact semicomputable manifold with boundary. We are going to prove that (\ref{noncom-man}) holds in the special case when $K$ is homeomorphic to $\mathbb{R}^{n} $ or $\mathbb{H}^{n} $. Moreover, we will get that (\ref{noncom-man}) holds if a sufficiently large part of $K$ looks like $\mathbb{R}^{n} $ or $\mathbb{H}^{n} $. More precisely, we will observe a manifold $K$ for which there exists an open set $U\subseteq K$ such that $\overline{U}$ is compact and such that $K\setminus U$ is homeomorphic to $\mathbb{R}^{n} \setminus B(0,r)$ or $\mathbb{H}^{n} \setminus B(0,r)$, where $r>0$ and $B(0,r)$ is an open  ball in $\mathbb{R}^{n} $ with respect to the Euclidean metric. We may assume $r=1$ since  $\mathbb{R}^{n} \setminus B(0,r)\cong \mathbb{R}^{n} \setminus B(0,1)$ and $\mathbb{H}^{n} \setminus B(0,r)\cong \mathbb{H}^{n} \setminus B(0,1)$ (we use $X\cong Y$ to denote that topological spaces $X$ and $Y$ are homeomorphic). Furthermore, it is not hard to conclude that $\mathbb{H}^{n} \setminus B(0,1)\cong \mathbb{H}^{n}$.

For $n\geq 1$ let 
$$\mathbb{S}^{n-1} =\{x\in \mathbb{R}^{n} \mid \|x\|=1\},$$
where $\|\cdot \|$ is the Euclidean norm on $\mathbb{R}^{n} $.

If $X$ is a topological space and $A\subseteq X$, by $\overline{A}$ we denote the closure of $A$ in $X$.
\begin{lemma}\label{lema-prije-tm}
Let $n\geq 1$ and let $K$ be an $n$-manifold with boundary. Suppose that there exists an open set $U\subseteq K$ such that $\overline{U}$ is compact and $K\setminus U$ is homeomorphic to $\mathbb{R}^{n} \setminus B(0,1) $ or $\mathbb{H}^{n} $. Then the following holds.
\begin{enumerate}
\item[(i)] A one-point compactification $K\cup \{\infty\}$ of $K$ is an $n$-manifold with boundary; if $K\setminus U\cong\mathbb{R}^{n} \setminus B(0,1) $, the boundary of $K\cup \{\infty\}$ is $\partial K$,  and  if $K\setminus U\cong\mathbb{H}^{n}  $, the boundary of $K\cup \{\infty\}$ is $\partial K\cup \{\infty\}$.
\item[(ii)] If $K\setminus U\cong\mathbb{R}^{n} \setminus B(0,1) $, then $\partial K$ is compact. If $K\setminus U\cong\mathbb{H}^{n}  $, then $\partial K$ is not compact.
\end{enumerate}
\end{lemma}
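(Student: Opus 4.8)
The plan is to analyze the one-point compactification $K^{+}=K\cup\{\infty\}$ point by point. Since a manifold with boundary is locally compact and (as in all our applications) $K$ is Hausdorff, $K^{+}$ is a compact Hausdorff space, $K$ is open in it and $\{\infty\}$ is closed. Consequently every $x\in K$ has all of its $K$-neighborhoods available as $K^{+}$-neighborhoods, so $x$ is a manifold-with-boundary point of $K^{+}$, and by the fact from \cite{mu} that a point cannot have both a neighborhood of type (1) and one of type (2), $x$ has the same type in $K^{+}$ as in $K$; hence $\partial(K^{+})\cap K=\partial K$. It remains to determine the behaviour of $K^{+}$ at $\infty$.

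The key step is the following general observation: if $A$ is a closed subspace of a locally compact Hausdorff space $Y$, then $A\cup\{\infty\}$, as a subspace of $Y^{+}$, is homeomorphic to the one-point compactification $A^{+}$ of $A$. (Indeed, intersecting the basic open sets of $Y^{+}$ with $A\cup\{\infty\}$ yields precisely the basic open sets of $A^{+}$: an open set of $Y$ meets $A$ in an open set of $A$, while $\{\infty\}\cup(Y\setminus C)$ meets $A\cup\{\infty\}$ in $\{\infty\}\cup(A\setminus(A\cap C))$ with $A\cap C$ compact because $A$ is closed in $Y$; conversely a compact subset of $A$ is compact in $Y$.) We apply this with $Y=K$ and $A=K\setminus U$, which is closed in $K$. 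Thus $(K\setminus U)\cup\{\infty\}$, with the subspace topology from $K^{+}$, is homeomorphic to $(K\setminus U)^{+}$. Moreover $(K\setminus U)\cup\{\infty\}$ is a neighborhood of $\infty$ in $K^{+}$, since it contains $\{\infty\}\cup(K\setminus\overline{U})$, which is open in $K^{+}$ because $\overline{U}$ is compact.

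Now one computes $(K\setminus U)^{+}$. If $K\setminus U\cong\mathbb{R}^{n}\setminus B(0,1)$, the inversion $x\mapsto x/\|x\|^{2}$ carries $\mathbb{R}^{n}\setminus B(0,1)$ homeomorphically onto $\hat{B}(0,1)\setminus\{0\}$ (the punctured closed unit ball of $\mathbb{R}^{n}$) and extends, by $\infty\mapsto 0$, to a continuous bijection $(\mathbb{R}^{n}\setminus B(0,1))^{+}\to\hat{B}(0,1)$ from a compact space onto a Hausdorff space, hence a homeomorphism; here $\infty$ corresponds to the interior point $0$. If $K\setminus U\cong\mathbb{H}^{n}$, then using $\mathbb{H}^{n}\cong\mathbb{H}^{n}\setminus B(0,1)$ and the same inversion (which preserves $\mathbb{H}^{n}$, being multiplication by a positive scalar) one gets $(\mathbb{H}^{n})^{+}\cong\hat{B}(0,1)\cap\mathbb{H}^{n}$, with $\infty\mapsto 0\in\Bd\mathbb{H}^{n}$. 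In either case, since $\{\infty\}\cup(K\setminus\overline{U})$ is open in $K^{+}$ and, through these homeomorphisms, is an open neighborhood of $0$ in $\hat{B}(0,1)$ (resp.\ in $\hat{B}(0,1)\cap\mathbb{H}^{n}$), it contains a small ball (resp.\ half-ball) about $0$; pulling this back gives $\infty$ a neighborhood in $K^{+}$ of type (1) in the first case and of type (2) in the second. Combined with the first paragraph, this shows $K^{+}$ is an $n$-manifold with boundary, with $\partial(K^{+})=\partial K$ when $K\setminus U\cong\mathbb{R}^{n}\setminus B(0,1)$ and $\partial(K^{+})=\partial K\cup\{\infty\}$ when $K\setminus U\cong\mathbb{H}^{n}$, which is (i).

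For (ii): when $K\setminus U\cong\mathbb{R}^{n}\setminus B(0,1)$, we have $\partial K=\partial(K^{+})$ by (i); the boundary of a manifold with boundary is closed (the set of interior points is open), so $\partial K$ is a closed subset of the compact space $K^{+}$ and hence compact. When $K\setminus U\cong\mathbb{H}^{n}$ (here taking $n\geq 2$), $\infty$ has a neighborhood $N$ in $K^{+}$ with a homeomorphism $g\colon\mathbb{H}^{n}\to N$, $g(0)=\infty$, and then $\partial(K^{+})\cap N=g(\Bd\mathbb{H}^{n})$; since $\Bd\mathbb{H}^{n}\cong\mathbb{R}^{n-1}$ has no isolated points, $\infty$ is a limit point of $\partial(K^{+})\setminus\{\infty\}=\partial K$, and as $\infty\notin K\supseteq\partial K$ this shows $\partial K$ is not closed in $K^{+}$, hence not compact. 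The step I expect to require the most care is the bookkeeping around $\infty$: the general fact that a closed subspace $A$ of $Y$ compactifies ``correctly'' inside $Y^{+}$, the verification that $(K\setminus U)\cup\{\infty\}$ is genuinely a neighborhood of $\infty$, and the extraction of an honest type-(1) or type-(2) neighborhood of $\infty$ that is open in $K^{+}$; once these are in place, the identifications of $(\mathbb{R}^{n}\setminus B(0,1))^{+}$ and $(\mathbb{H}^{n})^{+}$ via inversion are routine.
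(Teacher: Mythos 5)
Your argument for part (i) is essentially the paper's: both reduce everything to the point $\infty$ using the same three observations --- $K$ is open in $K\cup\{\infty\}$ so points of $K$ keep their type, $\{\infty\}\cup(K\setminus\overline{U})$ is open because $\overline{U}$ is compact, and the one-point compactification of the closed subspace $K\setminus U$ sits inside $K\cup\{\infty\}$ as the subspace $(K\setminus U)\cup\{\infty\}$ --- and they differ only in the explicit model at $\infty$: you identify $(\mathbb{R}^{n}\setminus B(0,1))\cup\{\infty\}$ with $\hat{B}(0,1)$ and $\mathbb{H}^{n}\cup\{\infty\}$ with $\hat{B}(0,1)\cap\mathbb{H}^{n}$ by inversion, while the paper maps into $\mathbb{S}^{n}$ and the upper half-sphere $\mathbb{S}^{n}\cap\mathbb{H}^{n+1}$; both identifications are correct. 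For part (ii) you take a genuinely different route: you deduce it from (i), getting compactness of $\partial K$ as a closed subset of the compact space $K\cup\{\infty\}$, and noncompactness from the fact that $\infty$ is a limit point of $\partial(K\cup\{\infty\})\setminus\{\infty\}=\partial K$ which does not belong to $\partial K$. The paper instead argues inside $K$: it shows $\partial K\subseteq\overline{U}\cup f(\mathbb{S}^{n-1})$ in the first case, and in the second exhibits $f(\Bd\mathbb{H}^{n}\setminus\hat{B}(0,r))\subseteq\partial K$ and derives a contradiction from compactness of $f^{-1}(\partial K)$. Your version is shorter but imports two extra hypotheses: Hausdorffness (plus local compactness) of $K$, needed so that $K\cup\{\infty\}$ is Hausdorff and ``compact implies closed'' applies --- harmless in the paper's applications, where $K$ lives in a metric space, though not part of the paper's definition of a manifold --- and the restriction $n\geq 2$ in the noncompactness claim, since $\Bd\mathbb{H}^{1}$ is a single point. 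That restriction is not a defect of your argument: the claim is actually false for $n=1$ (take $K=\mathbb{H}^{1}=[0,\infty\rangle$, $U=\emptyset$; then $\partial K=\{0\}$ is compact), and the paper's own proof silently breaks down there too, because $\Bd\mathbb{H}^{1}\setminus\hat{B}(0,r)=\emptyset$ yields no contradiction. So your proof matches the paper on (i), gives a legitimate alternative for (ii) that trades the paper's intrinsic argument for a quicker one via the compactification, and in doing so makes explicit an $n=1$ edge case that the paper overlooks.
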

\begin{proof}(i)
In general, if $X\cup \{\infty\}$ is a one-point compactification of a topological space $X$, then $X$ is clearly an open subspace of $X\cup \{\infty\}$. Therefore, if $x\in K$ and $N$ a neighborhood of $x$ in $K$, then $N$ is a neighborhood of $x$ in $K\cup \{\infty\}$. This means that we only have to prove that $\infty$ has a neighborhood in $K\cup \{\infty\}$ which is homeomorphic  either to $\mathbb{R}^{n} $ (if $K\setminus U\cong\mathbb{R}^{n} \setminus B(0,1) $) or $\mathbb{H}^{n} $ by a homeomorphism which maps $\infty$ to $\Bd \mathbb{H}^{n}$ (if $K\setminus U\cong\mathbb{H}^{n}  $).

Since $\overline{U}$ is compact, the set $(K\setminus \overline{U})\cup \{\infty\}$ is open in $K\cup \{\infty\}$ and obviously $(K\setminus \overline{U})\cup \{\infty\}\subseteq (K\setminus U)\cup \{\infty\}$. So $(K\setminus U)\cup \{\infty\}$ is a neighborhood of $\infty$ in $K\cup \{\infty\}$.

It is easy to verify the following general fact: if $Y$ is a closed subspace of a topological space $X$ and $Y\cup \{\infty\}$ and $X\cup \{\infty\}$ are one-point compactifications of $Y$ and $X$, then $Y\cup \{\infty\}$ is a subspace of $X\cup \{\infty\}$.

Therefore, the compactification $(K\setminus U)\cup \{\infty\}$ of $K\setminus U$ is a subspace of $K\cup \{\infty\}$. Using the fact that $(K\setminus U)\cup \{\infty\}$ is a neighborhood of $\infty$ in $K\cup \{\infty\}$, we conclude the following: if $N$ is a neighborhood of $\infty$ in $(K\setminus U)\cup \{\infty\}$, then $N$ is also a neighborhood of $\infty$ in $K\cup \{\infty\}$. So it suffices to find a neighborhood of $\infty$ in $(K\setminus U)\cup \{\infty\}$ with desired properties.

Let us suppose that $K\setminus U\cong\mathbb{R}^{n} \setminus B(0,1) $.
It suffices to prove that the point $\infty$ in the one-point compactification $(\mathbb{R}^{n} \setminus B(0,1))\cup \{\infty\}$ has a neighborhood homeomorphic to $\mathbb{R}^{n} $. But, as above, $(\mathbb{R}^{n} \setminus B(0,1))\cup \{\infty\}$ is a subspace of $\mathbb{R}^{n} \cup \{\infty\}$ and $(\mathbb{R}^{n} \setminus B(0,1))\cup \{\infty\}$ is a neighborhood of $\infty$ in $\mathbb{R}^{n} \cup \{\infty\}$. So it is enough to prove that $\infty$ has a neighborhood in in $\mathbb{R}^{n} \cup \{\infty\}$ which is homeomorphic to $\mathbb{R}^{n} $. However, this is clear since $\mathbb{R}^{n} \cup \{\infty\}$ is homeomorphic to $\mathbb{S}^{n} $ and $\mathbb{S}^{n} $ is an $n$-manifold.

Let us suppose now that $K\setminus U\cong\mathbb{H}^{n} $.
Since $\mathbb{H}^{n} \cong \mathbb{H}^{n}\cap B(0,1)$ by the homeomorphism $x\mapsto \frac{x}{1+\|x\|}$, we have
 $$K\setminus U\cong B(0,1)\cap \mathbb{H}^{n} .$$ So it is enough to prove that $\infty$ has a neighborhood in $(B(0,1)\cap \mathbb{H}^{n})\cup \{\infty\}$ which is homeomorphic to $\mathbb{H}^{n}$ by a homeomorphism which maps $\infty$ to $\Bd \mathbb{H}^{n}$.

Since the set $B(0,1)\cap \mathbb{H}^{n}$ is closed in $B(0,1)$, $(B(0,1)\cap \mathbb{H}^{n})\cup \{\infty\}$  is a subspace of $B(0,1)\cup \{\infty\}$. It is known that the function $f:B(0,1)\cup \{\infty\}\rightarrow \mathbb{S}^{n} $ given by $f(\infty)=(-1,0,\dots ,0)$, $f(0)=(1,0,\dots ,0)$ and
$$f(x)=\left(\cos\|x\|,\frac{x_{1} }{\|x\|}\sin\|x\|,\dots ,\frac{x_{n} }{\|x\|}\sin\|x\|\right)$$
for $x\in B(0,1)$, $x\neq 0$, $x=(x_{1} ,\dots ,x_{n} )$, is a homeomorphism. This function induces a homeomorphism
$$(B(0,1)\cap \mathbb{H}^{n})\cup \{\infty\}\rightarrow f((B(0,1)\cap \mathbb{H}^{n})\cup \{\infty\}).$$ However,
$$f((B(0,1)\cap \mathbb{H}^{n})\cup \{\infty\})=\mathbb{S}^{n} \cap \mathbb{H}^{n+1}$$
and $\mathbb{S}^{n} \cap \mathbb{H}^{n+1}$, i.e.\ upper half-sphere, is an $n$-manifold with boundary, its boundary is $\mathbb{S}^{n-1}\times \{0\}$. We conclude that $(B(0,1)\cap \mathbb{H}^{n})\cup \{\infty\}$ is an $n$-manifold with boundary and $\infty$ belongs to its boundary, meaning that  $\infty$ has a desired neighborhood in $(B(0,1)\cap \mathbb{H}^{n})\cup \{\infty\}$.

(ii) Suppose $f:\mathbb{R}^{n} \setminus B(0,1) \rightarrow K\setminus U$ is a homeomorphism. Then $K\setminus U$ is Hausdorff and since the set $f(\mathbb{S}^{n-1})$ is compact in $K\setminus U$, this set is closed in $K\setminus U$. But $K\setminus U$ is closed in $K$, so $f(\mathbb{S}^{n-1})$ is closed in $K$. It follows that the set
$$A=K\setminus (\overline{U}\cup f(\mathbb{S}^{n-1}))$$
is open in $K$. We have $A\subseteq K\setminus U$, so $A$ is open in $K\setminus U$ and it is therefore homeomorphic to the open subset $f(A)$ of $\mathbb{R}^{n} \setminus B(0,1)$. It follows from the definition of $A$ that $f(A)\subseteq \mathbb{R}^{n} \setminus \hat{B}(0,1)$ and, since the set $\mathbb{R}^{n} \setminus \hat{B}(0,1)$ is open in $\mathbb{R}^{n} $, $f(A)$ is open in $\mathbb{R}^{n} $. Hence $A$ is open subset of $K$ which is homeomorphic to an open subset of $\mathbb{R}^{n} $ and it follows that each point of $A$ has a neighborhood in $K$ homeomorphic to $\mathbb{R}^{n} $. So $A\cap \partial K=\emptyset $, hence $\partial K\subseteq \overline{U}\cup f(\mathbb{S}^{n-1})$. In general, the boundary of a manifold is a closed subset of the manifold. As a closed set contained in a compact set,  $\partial K$ is compact.

Suppose $f:\mathbb{H}^{n} \rightarrow K\setminus U$ is a homeomorphism. Since $(K\setminus U)\cap  \overline{U}$ is compact, the preimage by $f$ of this set is compact in $\mathbb{H}^{n}$ and we conclude  that there exists $r>0$ such that $f(\mathbb{H}^{n}\setminus \hat{B}(0,r))\subseteq K\setminus \overline{U}$. It follows that the set $f(\mathbb{H}^{n}\setminus \hat{B}(0,r))$ is open in $K$ and, consequently,
\begin{equation}\label{lema-top}
f(\Bd \mathbb{H}^{n}\setminus \hat{B}(0,r))\subseteq  \partial K.
\end{equation}

Suppose $\partial K$ is compact. The set $(K\setminus U)\cap \partial K$ is closed and contained in $\partial K$, hence it is compact. So $f^{-1} (\partial K)$ is a compact set in $\mathbb{H}^{n}$. But this contradicts (\ref{lema-top}). Thus $\partial K$ is not compact.
\end{proof}

\begin{theorem}\label{zadnji}
Let $(X,d,\alpha )$ be a computable metric space and let $K$ be a semicomputable set in this space which is, as a subspace of $(X,d)$, a manifold with boundary. Then the implication
 $$\partial K\mbox{ computable }\implies K\mbox{ computable }$$
 holds if there exists an open set $U$ in $K$ such that $\overline{U}$ is compact in $K$ and $K\setminus U$ is homeomorphic to $\mathbb{R}^{n} \setminus B(0,1) $ or $\mathbb{H}^{n} $.
\end{theorem}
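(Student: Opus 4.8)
The plan is to deduce the statement from Theorem~\ref{manifold-bound-tm}, applied not in $(X,d,\alpha)$ but in the pseudocompactification $(Y,\mathcal{S},(B_i))$ of $(X,d,\alpha)$. First I would note that the hypothesis forces $K$ to be non-compact: the set $K\setminus U$ is closed in $K$ and homeomorphic to $\mathbb{R}^n\setminus B(0,1)$ or to $\mathbb{H}^n$, each of which is an unbounded, hence non-compact, subset of Euclidean space, so $K$ cannot be compact. Since $K$ is a non-compact semicomputable set, the metric space $(X,d)$ must be unbounded, because in a bounded computable metric space every semicomputable set is compact. Also, $K$ is an $n$-manifold with boundary, $n\geq 1$ being the dimension occurring in the hypothesis, so Lemma~\ref{lema-prije-tm} applies to $K$.

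Next I would pass to $(Y,\mathcal{S},(B_i))$, which is a computable topological space by the theorem just proved. Since $K$ is semicomputable, $K\cap D$ is compact for every closed ball $D$, so by Proposition~\ref{CB-X-Y} the set $K\cup\{\infty\}$, as a subspace of $(Y,\mathcal{S})$, is a one-point compactification of $K$; combined with Lemma~\ref{lema-prije-tm}(i) this shows $K\cup\{\infty\}$ is an $n$-manifold with boundary, with $\partial(K\cup\{\infty\})=\partial K$ when $K\setminus U\cong\mathbb{R}^n\setminus B(0,1)$ and $\partial(K\cup\{\infty\})=\partial K\cup\{\infty\}$ when $K\setminus U\cong\mathbb{H}^n$. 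Moreover, since $K$ is not compact, Proposition~\ref{semi-semi-comp}(ii) gives that $K\cup\{\infty\}$ is semicomputable in $(Y,\mathcal{S},(B_i))$.

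The one point that requires genuine care is showing that $\partial(K\cup\{\infty\})$ is semicomputable in $(Y,\mathcal{S},(B_i))$, and for this I would split along the two cases of Lemma~\ref{lema-prije-tm}, matching the topological dichotomy (whether $\partial K$ is compact) with the appropriate clause of Proposition~\ref{semi-semi-comp}. By hypothesis $\partial K$ is computable in $(X,d,\alpha)$, hence in particular semicomputable there. If $K\setminus U\cong\mathbb{R}^n\setminus B(0,1)$, then $\partial K$ is compact by Lemma~\ref{lema-prije-tm}(ii), so Proposition~\ref{semi-semi-comp}(i) (applicable since $(X,d)$ is unbounded) shows $\partial K$ is semicomputable in $(Y,\mathcal{S},(B_i))$, and $\partial(K\cup\{\infty\})=\partial K$. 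If $K\setminus U\cong\mathbb{H}^n$, then $\partial K$ is not compact by Lemma~\ref{lema-prije-tm}(ii), so Proposition~\ref{semi-semi-comp}(ii), applied to the semicomputable set $\partial K$, shows $\partial K\cup\{\infty\}$ is semicomputable in $(Y,\mathcal{S},(B_i))$, and $\partial(K\cup\{\infty\})=\partial K\cup\{\infty\}$. In either case $\partial(K\cup\{\infty\})$ is semicomputable in $(Y,\mathcal{S},(B_i))$.

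Finally, Theorem~\ref{manifold-bound-tm}, applied to the computable topological space $(Y,\mathcal{S},(B_i))$ and the semicomputable $n$-manifold with boundary $K\cup\{\infty\}$ whose boundary is semicomputable, yields that $K\cup\{\infty\}$ is computable, hence computably enumerable, in $(Y,\mathcal{S},(B_i))$. Then Proposition~\ref{ce-ce-comp} gives that $K$ is computably enumerable in $(X,d,\alpha)$, and combined with the assumed semicomputability of $K$ this means $K$ is computable, which completes the argument. I do not anticipate a deep obstacle, since the analytic and topological substance is already carried by Theorem~\ref{manifold-bound-tm}, Proposition~\ref{semi-semi-comp} and Lemma~\ref{lema-prije-tm}; the work here is to assemble these correctly, the one subtlety being the need to distinguish whether $K\setminus U$ is of type $\mathbb{R}^n\setminus B(0,1)$ or of type $\mathbb{H}^n$ when handling the boundary, since this governs whether the point $\infty$ is added to $\partial K$ and thus which clause of Proposition~\ref{semi-semi-comp} is used.
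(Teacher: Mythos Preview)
Your proposal is correct and follows essentially the same route as the paper: pass to the pseudocompactification, use Proposition~\ref{CB-X-Y} and Lemma~\ref{lema-prije-tm} to identify $K\cup\{\infty\}$ and its boundary, invoke Proposition~\ref{semi-semi-comp} (clause (i) or (ii) according to whether $\partial K$ is compact) to get semicomputability of the boundary, apply Theorem~\ref{manifold-bound-tm}, and descend via Proposition~\ref{ce-ce-comp}. The only minor difference is that you observe directly that $K$ is forced to be non-compact, whereas the paper phrases this as ``we may assume $K$ is noncompact'' and falls back on Theorem~\ref{manifold-bound-tm} otherwise; your observation is in fact sharper, since $K\setminus U$ is a closed non-compact subset of $K$.
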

\begin{proof}
We may assume that $K$ is noncompact, otherwise the claim follows from Theorem \ref{manifold-bound-tm} (or \cite{lmcs:mnf}). It follows that $(X,d)$ is unbounded.

Suppose that $\partial K$ is semicomputable in $(X,d,\alpha )$.
Let $(Y,\mathcal{S},(B_{i} ))$ be a pseudocompactification of $(X,d,\alpha )$.

Let us suppose that $K\setminus U\cong \mathbb{R}^{n} \setminus B(0,1)$ for some open set $U$ in $K$ such that $\overline{U}$ is compact. By Lemma \ref{lema-prije-tm}(ii) the set $\partial K$ is compact and, by Proposition \ref{semi-semi-comp}(i), $\partial K$ is semicomputable in $(Y,\mathcal{S},(B_{i} ))$. By  Proposition \ref{semi-semi-comp}(ii), $K\cup \{\infty\}$ is semicomputable in $(Y,\mathcal{S},(B_{i} ))$ and, by Lemma \ref{lema-prije-tm}(i) and Proposition \ref{CB-X-Y}, $K\cup \{\infty\}$ is a manifold with boundary and its boundary is $\partial K$. It follows from Theorem \ref{manifold-bound-tm} that $K\cup \{\infty\}$ is c.e.\ in $(Y,\mathcal{S},(B_{i} ))$. Proposition \ref{ce-ce-comp} now implies that $K$ is c.e.\ in $(X,d,\alpha )$. Hence $K$ is computable in $(X,d,\alpha )$.

Let us suppose that $K\setminus U\cong \mathbb{H}^{n}$ for some open set $U$ in $K$ such that $\overline{U}$ is compact. Using Proposition \ref{CB-X-Y}, Lemma \ref{lema-prije-tm} and Proposition \ref{semi-semi-comp} we get the following conclusion:
$K\cup \{\infty\}$ is a semicomputable manifold with boundary in $(Y,\mathcal{S},(B_{i} ))$, it boundary is $\partial K\cup \{\infty\}$ and $\partial K\cup \{\infty\}$ is a semicomputable set in $(Y,\mathcal{S},(B_{i} ))$.
Again, Theorem \ref{manifold-bound-tm} and Proposition \ref{ce-ce-comp} imply that $K$ is computable in $(X,d,\alpha )$.
\end{proof}

In particular, if $K$ is a semicomputable set in $(X,d,\alpha )$ such that $K\cong \mathbb{R}^{n} $, then $K$ is computable, and if $K$ is a semicomputable set for which there exists a homeomorphism  $f:\mathbb{H}^{n}\rightarrow  K$ such that $f(\Bd \mathbb{H}^{n})$ is a semicomputable set, then $K$ is computable.

\begin{example1}
Let $(X,d,\alpha )$ be a computable metric space and let $K$ be a semicomputable set in this space.

Suppose $K\cong \mathbb{S}^{1}\times [0,\infty\rangle$.  Then $K$ is a manifold with boundary and $\partial K=f(\mathbb{S}^{1}\times \{0\})$, where $f:\mathbb{S}^{1}\times [0,\infty\rangle\rightarrow K$ is a homeomorphism.
Suppose $\partial K$ is semicomputable set. Then $K$ is computable. This follows from Theorem \ref{zadnji} since $\mathbb{S}^{1}\times [0,\infty\rangle\cong \mathbb{R}^{2} \setminus B(0,1)$: the function $g:\mathbb{S}^{1}\times [0,\infty\rangle\rightarrow \mathbb{R}^{2} \setminus B(0,1)$, $g(x,t)=(1+t)x$, is a homeomorphism.

If we restrict $g$ to the product of the upper half-circle $\mathbb{S}^{1}\cap \mathbb{H}^{2}$ and $[0,\infty\rangle $, we get the conclusion that $[0,1]\times [0,\infty\rangle \cong \mathbb{H}^{2}\setminus B(0,1)$, hence
$[0,1]\times [0,\infty\rangle \cong \mathbb{H}^{2}$.

Suppose $K\cong [0,1]\times [0,\infty\rangle$. Then $K$ is a manifold with boundary and $\partial K=f([0,1]\times \{0\}\cup \{0,1\}\times [0,\infty\rangle )$, where $f:[0,1]\times [0,\infty\rangle\rightarrow K$ is a homeomorphism. By Theorem \ref{zadnji}, $K$ is computable if $\partial K$ is semicomputable.
\end{example1}

\section{Conclusion}
Semicomputable sets in Euclidean spaces (and in other usual spaces) naturally arise and it is of interest to know under which conditions these sets are computable. It is known that topology plays a important role in the description of such conditions. In particular, a semicomputable set is computable if it is a compact topological manifold (whose boundary is semicomputable).   In this paper we have shown that topology is actually involved in this matter at the basic level: the ambient space (Euclidean space or computable metric space) can be replaced by a computable topological space. Hence, to define necessary notions and to prove that semicomputable sets are computable under certain conditions, we do not need Euclidean space and we do not need even metric spaces: computable topological spaces are  sufficient.

Furthermore, it has been shown how the introduced concepts and results can be used to conclude that certain noncompact sets in computable metric spaces are computable. We believe that the subject of this paper has a potential for further investigations and applications.

\bibliographystyle{plain}
\bibliography{References}

\end{document}